\theoremstyle{definition}
\newtheorem{definition}{Definition}
\theoremstyle{plain}
\newtheorem{thm}{Theorem}[section]
\newtheorem{lemma}{Lemma}
\newtheorem{prop}{Proposition}[section]
\theoremstyle{remark}
\newtheorem{remark}{Remark}
\newcommand\cals{\mathcal{S}}
\newcommand\co{\colon\thinspace}
\begin{document}

\title{The maximal injectivity radius of hyperbolic surfaces with geodesic boundary}

\author{Jason DeBlois}
\address{Department of Mathematics, University of Pittsburgh}\email{jdeblois@pitt.edu}
\author{Kim Romanelli}
\address{Catalyte}\email{romanelli.ki@gmail.com}

\begin{abstract}
We give sharp upper bounds on the injectivity radii of complete hyperbolic surfaces of finite area with some geodesic boundary components.  The given bounds are over all such surfaces with any fixed topology; in particular, boundary lengths are not fixed.  This extends the first author's earlier result to the with-boundary setting.  In the second part of the paper we comment on another direction for extending this result, via the systole of loops function.\end{abstract}

\maketitle

The main results of this paper relate to maximal injectivity radius among hyperbolic surfaces with geodesic boundary. For a point $p$ in the interior of a hyperbolic surface $F$, by the \textit{injectivity radius of $F$ at $p$} we mean the supremum $\mathit{injrad}_p(F)$ of all $r>0$ such that there is a locally isometric embedding of an open metric neighborhood -- a disk -- of radius $r$ into $F$ that takes the disk's center to $p$. If $F$ is complete and without boundary then $\mathit{injrad}_p(F)=\frac{1}{2}\mathit{sys}_p(F)$ at $p$, where $\mathit{sys}_p(F)$ is the \textit{systole of loops at $p$}, the minimal length of a non-constant geodesic arc in $F$ with both endpoints at $p$. But if $F$ has boundary then $\mathit{injrad}_p(F)$ is bounded above by the distance from $p$ to the boundary, so it approaches $0$ as $p$ approaches $\partial F$ (and we extend it continuously to $\partial F$ as $0$). On the other hand, $\mathit{sys}_p(F)$ does not approach $0$ as $p\to\partial F$. One theme of this paper contrasts the behavior of these two functions among surfaces with boundary.

Our work belongs to a line of results going back at least to L.~Fejes T\'oth \cite{FejesToth}, who identified the supremal density of equal-radius circle packings of the hyperbolic plane $\mathbb{H}^2$. Later work of C.~Bavard \cite{Bavard} tied such packing density questions to the maximal injectivity radius of surfaces. For any closed surface $S$, the main results of \cite{Bavard} identify the maximum of $\mathit{injrad}_p(F)$ over all $p\in F$ and all hyperbolic surfaces $F$ homeomorphic to $S$, and they characterize such $F$ at which the maximum is attained in terms of a decomposition into hyperbolic polygons. Bavard observed that each maximizing closed surface arises as a quotient of $\mathbb{H}^2$ by the action of a group of isometries that stabilizes a maximal-density packing. The packing's disks have a fixed radius that depends only on the topology of $S$, and the surface group acts transitively on them. 

Schmutz \cite[Theorem 11]{Schmutz_maxRiem}, then later (and more generally) Bavard \cite{Bavard_anneaux}, extended these results to the context of hyperbolic surfaces with geodesic boundary using analogous techniques. Rather than addressing the injectivity radius at \textit{points} of the surface, these results bound what the English abstract of \cite{Bavard_anneaux} terms the \textit{injectivity radius of a disjoint union of closed geodesics}; a.k.a.~ the maximal collar width. (We thank the referee for pointing out these references to us.) A separate line of study, which includes recent works of Fanoni \cite{Fanoni} and Gendulphe \cite{Gendulphe_systole}, gives \textit{lower} bounds on maximal injectivity radius in various classes of hyperbolic orbifolds and manifolds.

Our first main result here gives upper bounds on the injectivity radius at points of surfaces with boundary, in the sense of the first paragraph above. Its proof uses techniques that the first author introduced a few years ago in extending the bound of \cite{Bavard} to finite-area, non-closed surfaces \cite{DeB_Voronoi}. Unlike in the closed case, for a maximal-radius disk in a non-compact surface the preimage circle packing of $\mathbb{H}^2$ does not have maximal density for its radius.  So we took a different, though related, approach in \cite{DeB_Voronoi} which extends to the bounded case with a few adaptations.

\theoremstyle{plain}
\newtheorem*{maxinjradthrm}{Theorem \ref{maximal injrad}}
\begin{maxinjradthrm} Fix natural numbers $\chi<0$ and $b,n\ge 0$. For a complete, finite-area hyperbolic surface $F$ with compact geodesic boundary that has $n$ cusps, $b$ boundary components and Euler characteristic $\chi$, and any $p\in F$, $\mathit{injrad}_p(F) \leq r_{\chi, n, b}$, where $r_{\chi, n, b}$ is the unique solution to:
$$ 3\left(2-\left(2\chi+b+n\right) \right) \alpha(r_{\chi, n, b}) +2n\beta(r_{\chi, n, b}) + 2b \gamma(r_{\chi, n, b}) = 2\pi. $$
Here, $\alpha(r) = 2\sin^{-1}{\left(\frac{1}{2\cosh{r}}\right)}$,  
	$\beta(r) = \sin^{-1}{\left(\frac{1}{\cosh{r}}\right)}$, and 
	$\gamma(r) = 2\sin^{-1}{\left(\frac{1}{\sqrt{2}\cosh{r}} \right)}$
	each measure vertex angles, respectively that of an equilateral triangle with side length $2r$, 
	at finite vertices of a horocyclic ideal triangle with compact side length $2r$, 
	and of a square with side length $2r$. 
	
	This bound is sharp. It is attained at finitely many surfaces up to isometry, which all decompose into a common collection of hyperbolic polygons.\end{maxinjradthrm}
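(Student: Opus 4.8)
The plan is to reduce the bound to a single angle inequality and then prove it by decomposing $F$ into pieces dual to a circle packing, bounding each piece by a ``regular'' model. Since $\cosh r$ is increasing, each of $\alpha,\beta,\gamma$ is strictly decreasing in $r$, so $L(r):=3(2-2\chi-b-n)\,\alpha(r)+2n\,\beta(r)+2b\,\gamma(r)$ is strictly decreasing; hence the defining equation $L(r)=2\pi$ has a unique root $r_{\chi,n,b}$, and (as $L$ is decreasing) the bound $\mathit{injrad}_p(F)\le r_{\chi,n,b}$ follows once we show $L(r)\ge 2\pi$ for $r=\mathit{injrad}_p(F)$. Fixing such an $r$, I lift $p$ to $\tilde p\in\mathbb{H}^2$; the embedded radius-$r$ disk shows that the orbit $\Gamma\tilde p$ under the holonomy $\Gamma$ is $2r$-separated and that $\tilde p$ lies at distance at least $r$ from every lift of $\partial F$.

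Next I would form the centered dual decomposition of $F$ based at $p$ together with the cusps and boundary, in the manner of \cite{DeB_Voronoi}: its $2$-cells project from the cells dual to the Voronoi decomposition of $\Gamma\tilde p$, it has the single interior vertex $p$, its ideal vertices are the $n$ cusps, and each geodesic boundary component occurs as a family of cell sides. The essential point is that the cell corners at $p$ account for the full angle $2\pi$. I would then sort these corners into three classes --- generic interior corners, corners of cells incident to a cusp, and corners of cells meeting $\partial F$ --- and use an Euler characteristic / Gauss--Bonnet count to bound their numbers by $3(2-2\chi-b-n)$, $2n$, and $2b$ respectively. These are exactly the counts realized by the extremal decomposition into $2-2\chi-b-n$ equilateral triangles, $n$ horocyclic ideal triangles, and $b$ squares.

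The geometric heart is a monotonicity lemma: subject to the separation constraints above, after subdividing each cell from its circumscribed center, a generic corner contribution at $p$ is at most $\alpha(r)$, a cusp-incident contribution is at most $\beta(r)$, and a boundary-incident contribution is at most $\gamma(r)$, with equality precisely for the equilateral triangle of side $2r$, the horocyclic ideal triangle of compact side $2r$, and the square of side $2r$. Granting the lemma, summing over all corners gives $2\pi=\sum_{\text{corners}}\theta\le 3(2-2\chi-b-n)\,\alpha(r)+2n\,\beta(r)+2b\,\gamma(r)=L(r)$, which is the inequality we want. I expect the main obstacle to be this lemma in its degenerate forms: controlling cells whose circumscribed circle is not centered inside the cell --- the situation the centered dual is designed to manage --- and adapting the circle-packing estimate to the horocyclic (cusp) and reflected-square (boundary) models, where the relevant extremal shape is no longer the equilateral triangle.

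Finally, for sharpness I would reverse the construction, gluing $2-2\chi-b-n$ equilateral triangles, $n$ horocyclic ideal triangles, and $b$ squares of the common size $2r_{\chi,n,b}$ around a single vertex so that the total angle is exactly $2\pi$; the defining equation is precisely the condition that the angles close up, and one checks that the radius-$r_{\chi,n,b}$ disk then embeds and realizes the bound. Each admissible identification pattern produces a surface of the prescribed topology attaining $r_{\chi,n,b}$, and since there are only finitely many ways to assemble a fixed multiset of polygons around one vertex with consistent side identifications, the maximum is attained at finitely many surfaces, all sharing this common polygonal decomposition.
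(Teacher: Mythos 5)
Your reduction to the inequality $L(r)\ge 2\pi$ at $r=\mathit{injrad}_p(F)$ and your sharpness construction both match the paper in spirit, but the ``geometric heart'' of your argument --- the monotonicity lemma asserting per-corner angle bounds --- is false, and this is not a repairable technicality. Consider a Delaunay (geometric dual) triangle with two sides of length exactly $2r$ meeting at $p$ and apex angle close to $\pi/2$: its third side is longer than $2r$, it still contains its circumcenter, yet its corner angle at $p$ far exceeds $\alpha(r)<\pi/3$; non-centered cells do even worse, with corner angles approaching $\pi$. Subdividing from the circumcenter does not save this: the half-corner at $p$ of an isosceles sub-triangle with base $\ge 2r$ and large circumradius approaches $\beta(r)=\sin^{-1}(1/\cosh r)$, which exceeds $\alpha(r)/2$. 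What is actually true, and what the paper proves, is a bound on each cell's \emph{angle sum} --- equivalently, by Gauss--Bonnet, a lower bound on each cell's \emph{area} in terms of its edge count and the edge-length bound $d=2r$ (e.g.\ $\mathrm{Area}\ge (n_i-2)D_0(d,d,d)$ with $D_0(d,d,d)=\pi-3\alpha(r)$, and $D_0(\infty,d,\infty)=\pi-2\beta(r)$, $D_0(d,d,d,d)=2\pi-4\gamma(r)$ for the cusp and boundary cells). Individual corners cannot be controlled; only totals can, which is precisely why the centered dual complex and the area/Gauss--Bonnet bookkeeping exist. Relatedly, your proposed a priori bounds on the \emph{numbers} of corners of each type ($3(2-2\chi-b-n)$, $2n$, $2b$) do not hold in general; in the paper these counts are not bounded separately but emerge from an exact Euler-characteristic identity ($2e-2m=4k-4\chi$) combined with the area inequalities, and are pinned down only in the equality case.

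A second, independent gap: you never say how the Voronoi/centered-dual machinery is defined on a surface \emph{with geodesic boundary}, and the assertion that ``each geodesic boundary component occurs as a family of cell sides'' does not follow from the boundaryless theory you cite. The paper's device is to double $F$ across $\partial F$ and run the machinery on $\mathit{DF}$ with the reflection-symmetric set $\{p,\bar p\}$; reflection-invariance is then used to show $\partial F$ lies in the Voronoi graph, that every cell meeting $\partial F$ has an even number ($\ge 4$) of vertices, and hence that the relevant extremal boundary cell is the square of side $2r$ (cut by $\partial F$ into Saccheri quadrilaterals), which is where the $2b\,\gamma(r)$ term and the strict gain $D_0(d,d,d,d)>2D_0(d,d,d)$ come from. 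Without the doubling (or some substitute for it), the $\gamma$ term in your inequality has no proof. Your sharpness argument --- gluing equilateral triangles, horocyclic ideal triangles, and boundary quadrilaterals around one vertex with angle sum $2\pi$ --- does agree with the paper's construction via Poincar\'e's polygon theorem, but it cannot stand alone: certifying that the glued surface realizes $\mathit{injrad}_p=r_{\chi,n,b}$ (rather than something smaller) is done in the paper by appealing back to the equality case of the packing bound, i.e.\ to exactly the part of the argument that is missing here.
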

	
Theorem \ref{maximal injrad} is stated in compressed form here in the introduction. Its full statement, in Section \ref{packrad}, describes the decomposition of maximizing surfaces into hyperbolic polygons in detail. 

The bound of Theorem \ref{maximal injrad} differs from the main result of \cite{DeB_Voronoi} in the appearance of the angle measure $\gamma$ in the equation defining the bound. This in turn reflects the appearance of \textit{Saccheri quadrilaterals}, which are obtained by cutting squares in half along perpendicular bisectors between opposite sides, in the decomposition of maximizing surfaces. The \textit{base} of such a quadrilateral, its side in the perpendicular bisector, wraps around each geodesic boundary component of a maximizing surface with boundary. Above, a \textit{horocyclic ideal triangle} has two vertices on a horocycle $C$ and a single ideal vertex at the ideal point of $C$.

We prove Theorem \ref{maximal injrad} in Section \ref{packrad} using the ``centered dual decomposition'' defined in \cite{DeB_Voronoi}.  For a surface $F$ with geodesic boundary and a finite set $\cals\subset\mathit{int}\,F$, we double $F$ across its boundary to produce a surface $\mathit{DF}$ without boundary and a finite set $\cals\cup\overline\cals$ invariant under the reflection of $\mathit{DF}$ exchanging $F$ with its mirror image. We show that the centered dual determined by $\cals\cup\overline\cals$ is also reflection-invariant and, in Proposition \ref{maximal rad}, adapt the arguments of \cite{DeB_Voronoi} to this setting to bound the maximal radius of a packing of $\mathit{DF}$ by disks centered in $\cals\cup\overline\cals$. We complete the proof of Theorem \ref{maximal injrad} by showing that this bound is attained when $\cals = \{p\}$ is a singleton.

The next two sections consider the systole of loops function, defined as in the first paragraph, which offers a different direction for extending \cite[Thm.~5.11]{DeB_Voronoi} to the bounded context.  As mentioned above, if $\partial F = \emptyset$ then $\mathit{sys}_p(F) = 2\mathit{injrad}_p(F)$ for each $p\in F$, since for a maximal-radius disk based at $p$, a minimal-length geodesic arc based at $p$ consists of two radial arcs that meet at a point of self-tangency.  But if $F$ has boundary then 
\[ \mathit{injrad}_p(F) = \min\left\{ \frac{1}{2}\mathit{sys}_p(F), \mathit{dist}(p,\partial F) \right\} \]
is strictly less than $\mathit{sys}_p(F)$ in general, for instance near $\partial F$.  

In fact as we show in Section \ref{sysloops}, the problem of maximizing $\mathit{sys}_p(F)$ fundamentally differs from that of maximizing $\mathit{injrad}_p(F)$, in the following sense: by Proposition \ref{up up up}, for a hyperbolic surface $F$ with geodesic boundary, the maximum of $\mathit{sys}_p(F)$ over all $p\in F$ is bounded below by a linear function of the length of the longest component of $\partial F$.  So since boundary lengths can go to infinity the systole of loops function has no global maximum over all bounded hyperbolic surfaces with a fixed topology; that is, there is no direct analog of Theorem \ref{maximal injrad} concerning $\mathit{sys}_p$. 

Instead, one can fix both a topology and a collection of boundary lengths, and seek to maximize $\mathit{sys}_p(F)$ over all points $p$ in all hyperbolic surfaces $F$ with geodesic boundary that have the given topology \textit{and boundary lengths}. The preprint \cite{Gendulphe} takes this perspective. Theorem 1.2 there describes an equation determined by this data and claims that the maximum value is the unique positive solution of this equation. However for certain collections of boundary lengths, in particular when one is much longer than any other, the equation given in \cite[Theorem 1.2]{Gendulphe} has no solutions.  We prove this in Lemma \ref{F} and expand on the issue in Section \ref{three hole}.  

We suspect that the formula given in \cite[Theorem 1.2]{Gendulphe} requires adjustment only for boundary length collections with exactly one member that is ``unreasonably large''.  We confirm this and make the adjustment for the simplest case, that of the three-holed sphere, in Theorem \ref{maximal sysloops}. This case is simplest because the three-holed sphere is \textit{rigid}: there is a unique such sphere $F$, up to isometry, with any given collection $\{b_1,b_2,b_3\}\subset (0,\infty)$ of boundary lengths. So to maximize $\mathit{sys}_p(F)$, we must just optimize the choice of $p\in F$. Even this task is somewhat subtle.

\theoremstyle{plain}
\newtheorem*{sysloopsthrm}{Theorem \ref{maximal sysloops}}
\newcommand\MaxSysLoops{ 
	Let $F$ be a hyperbolic three-holed sphere with geodesic boundary components $B_1$, $B_2$, $B_3$ of respective lengths $b_1 \leq  b_2 \leq b_3 \in (0, \infty)$.
	\begin{enumerate}[a)]
	\item The maximum value of $\mathit{sys}_p(F)$, taken over all $p\in F$, is attained in the interior of $F$ if and only if $f_{(b_1,b_2)}(b_3) > \pi$, where
	 \[ f_{(b_1,b_2)}(x) = 6\sin^{-1}\left(\frac{1}{2\cosh(x/2)}\right) + 2\sum\limits_{i=1}^{2}\sin^{-1}\left(\frac{\cosh(b_i /2)}{\cosh(x/2)}\right).\] 
	 In this case it is the unique solution $x\in(b_3,\infty)$ to $f_{(b_1,b_2)}(x) + \sin^{-1}\left(\frac{\cosh(b_3/2)}{\cosh(x/2)}\right) =2\pi$.
	For a point $p\in\mathit{int}(F)$ at which the maximum value is attained, the systolic loops based at $p$ divide $F$ into an equilateral triangle and three one-holed monogons.
	\item The maximum value of $\mathit{sys}_p(F)$ is attained on $\partial F$ if and only if $f_{(b_1,b_2)}(b_3) \leq \pi$. In this case $b_3$ is strictly larger than $b_1$ and $b_2$, and the maximum is attained on the component $B_3$ of length $b_3$. The maximum value of $\mathit{sys}_p(F)$ is the unique $x \in (b_2,b_3]$ such that $g(x) = \pi$, where
	\begin{align*}
	g(x) = 2\cos^{-1}{\left( \frac{\tanh{(b_3 /2)}}{\tanh{(x)}}\right)} + 2\sin^{-1}{\left(\frac{\sinh{(b_3 /2)}}{\sinh{(x)}}\right)} + 2\sum\limits_{i=1}^{2}\sin^{-1}{\left(\frac{\cosh{(b_i /2)}}{\cosh{(x/2)}}\right)}, \end{align*}
and for $p\in F$ where the maximum is attained, the systolic loops based at $p$ divide $F$ into an isosceles triangle and two one-holed monogons.
	\end{enumerate}
}
\begin{sysloopsthrm}\MaxSysLoops\end{sysloopsthrm}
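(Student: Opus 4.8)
The plan is to exploit the rigidity of the three-holed sphere: since $F$ is determined up to isometry by $(b_1,b_2,b_3)$, maximizing $\mathit{sys}_p(F)$ is a two-variable optimization of a continuous function over the compact surface $F$, so a maximum is attained and the only question is where. First I would reduce $\mathit{sys}_p(F)$ to the lengths of a small family of loops. Since the only simple closed geodesics in a pair of pants are its three cuffs, I expect the systole to be realized, for each relevant $p$, by a geodesic loop freely homotopic to one of $B_1,B_2,B_3$; writing $\ell_i(p)$ for the length of the geodesic loop at $p$ around $B_i$, this gives $\mathit{sys}_p(F)=\min_i \ell_i(p)$. Lifting to $\mathbb{H}^2$ and applying the displacement formula for the translation of length $b_i$ whose axis covers $B_i$ yields the key identity
\[ \sinh\!\left(\frac{\ell_i(p)}{2}\right)=\cosh\big(d(p,B_i)\big)\,\sinh\!\left(\frac{b_i}{2}\right), \]
so each $\ell_i$ is a strictly increasing function of the distance $d(p,B_i)$. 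A collar/length estimate would confirm that no loop in another homotopy class is shorter in the region of interest.

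Next I would treat an interior maximum. Because each $\ell_i$ grows with $d(p,B_i)$, maximizing $\min_i\ell_i$ forces the active loops into balance, and at a genuine interior maximum all three become systolic: $\ell_1(p)=\ell_2(p)=\ell_3(p)=x$. The three systolic loops then cut $F$ into an equilateral triangle of side $x$ together with three one-holed monogons, one enclosing each $B_i$, as claimed. The defining equation comes from requiring the face angles at $p$ to fill out $2\pi$. The triangle contributes three equal angles $\alpha(x/2)=2\sin^{-1}(1/(2\cosh(x/2)))$, and developing the monogon around $B_i$ as a Saccheri quadrilateral with summit the loop (of length $x$) and base on $B_i$, a summit-angle computation gives its corner angle at $p$ as $2\sin^{-1}(\cosh(b_i/2)/\cosh(x/2))$. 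Summing the triangle angles and the three monogon corners to $2\pi$ yields the equation governing part (a). Its left side is strictly decreasing in $x$ and tends to $0$ as $x\to\infty$, while at $x=b_3$ the contribution of $B_3$ is maximal; the resulting threshold for a root $x\in(b_3,\infty)$ to exist, and be unique, is exactly $f_{(b_1,b_2)}(b_3)>\pi$.

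When no such interior balance exists, that is when $f_{(b_1,b_2)}(b_3)\le\pi$, I would show the maximum sits on $\partial F$, and precisely on the longest cuff $B_3$. For a boundary point $p\in B_3$ the admissible neighborhood is a half-disk, so the governing condition is an angle sum equal to $\pi$ rather than $2\pi$. Here the loop around $B_3$ degenerates to $B_3$ itself (length $b_3$), the loops around $B_1,B_2$ remain systolic of common length $x\in(b_2,b_3]$ and cut off two monogons contributing $2\sin^{-1}(\cosh(b_i/2)/\cosh(x/2))$, and the central region is an isosceles triangle with a side of length $x$ and a side along $B_3$. Its corner angles at $p$, computed from a right triangle with hypotenuse $x$ and a leg of length $b_3/2$, supply the terms $2\cos^{-1}(\tanh(b_3/2)/\tanh(x))$ and $2\sin^{-1}(\sinh(b_3/2)/\sinh(x))$; setting the total angle equal to $\pi$ gives $g(x)=\pi$, and monotonicity of $g$ yields the unique root in $(b_2,b_3]$.

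The main obstacle is proving the dichotomy rigorously and gluing the two regimes at the threshold. Three points need care. First, that the balanced interior critical point, when it lies in $F$, is the global maximum rather than merely a critical point; I would establish this with a monotonicity argument for $\min_i\ell_i$ along paths moving toward the cuffs. Second, that when the interior equation has no root, $\mathit{sys}_p$ strictly increases as $p$ is pushed toward $B_3$, forcing the maximum onto $B_3$ and onto its longest component. Third, and most delicate, that no loop from another class intervenes as $p$ approaches $B_3$, in particular the second-nearest lift of $B_3$ whose axis begins to compete; and that the boundary analysis, where a systolic loop may run along $\partial F$ and the triangle acquires a side on $B_3$, connects continuously to the interior equation at $f_{(b_1,b_2)}(b_3)=\pi$, where both equations share the common root $x=b_3$.
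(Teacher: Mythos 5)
Your plan follows the paper's own route: reduce $\mathit{sys}_p(F)$ to the three cuff-loop lengths (Lemma \ref{simple loop}), relate $\ell_i$ to $d(p,B_i)$ via a Saccheri quadrilateral (Lemma \ref{loop length}), force the three loops into balance at an interior maximum to get the angle-sum-$2\pi$ equation, and use an isosceles triangle plus two monogons with angle sum $\pi$ at a boundary maximum. But there is a genuine gap in how the dichotomy closes. Everything you describe yields only necessary conditions: \emph{if} the maximum is interior, \emph{then} the equation in (a) holds, whence $f_{(b_1,b_2)}(b_3)>\pi$. The converse direction --- that $f_{(b_1,b_2)}(b_3)>\pi$ forces the maximum to actually be attained in the interior, with value equal to the root --- is never argued; you relegate it to your list of ``obstacles'' (a flow toward $B_3$, ``monotonicity along paths'', matching at the threshold) without resolving it. The paper closes this direction with a realization step your proposal lacks entirely: given the root $x$, it glues an equilateral triangle of side $x$ to three one-holed monogons with hole lengths $b_1,b_2,b_3$; the angle equation makes the vertex image a smooth point, the glued surface has boundary lengths $b_1,b_2,b_3$ and hence is isometric to $F$ by rigidity of the three-holed sphere, and at the vertex point the systole equals $x>b_3$, while every point of $\partial F$ has systole at most $b_3$. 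That single construction proves the maximum is interior, identifies it with the unique root, and exhibits the claimed decomposition. (Alternatively one can prove the identity $g(b_3)=f_{(b_1,b_2)}(b_3)$ and use monotonicity of $g$ on the boundary case, but that too must be carried out.)

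Second, your perturbation (``balance'') arguments fail at exactly the exceptional points your sketch ignores. At a point $p$ of the seam of $H\cap\bar{H}$ joining $B_2$ to $B_3$, a three-way balance is impossible: by the refined form of Lemma \ref{simple loop}, the loop around $B_1$ is \emph{strictly} longer than $\max\{\ell_2,\ell_3\}$ there; moreover $p$ has two mirror-image shortest arcs to $B_1$, so $d(\cdot,B_1)$ is not differentiable at $p$ and ``moving away from $B_1$'' has no meaning. Ruling out seam maxima requires the separate argument of Lemma \ref{interior} (a perpendicular push off the seam increases $\ell_2$ and $\ell_3$ simultaneously, while $\ell_1$ stays above the minimum by continuity). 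Likewise in (b), sliding $p$ along $B_3$ to force $\ell_1=\ell_2$ breaks down at the feet $p_1,p_2$ of the perpendicular arcs from $B_1$ and $B_2$, which are the unique local maxima of $\ell_2$ and $\ell_1$ on $B_3$; excluding these two points uses the same strict inequality of Lemma \ref{simple loop}, which the paper proves by a surgery argument. A generic ``collar/length estimate'' does not deliver that strict comparison at precisely these points, so without it both the interior and the boundary halves of your argument have holes.
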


Above, a \textit{one-holed monogon} refers to the quotient space of a Saccheri quadrilateral $Q$ obtained by isometrically identifying its sides that share endpoints with the base, so that the base's endpoints are paired. This space is homeomorphic to an annulus. Away from the summit vertex quotient (where the total angle is less than $\pi$) it has the structure of a hyperbolic surface with geodesic boundary, with the quotient of the base of $Q$ as a geodesic boundary component.

The equation determining the bound of Theorem \ref{maximal sysloops}(a) matches the three-holed sphere case of that given in \cite[Theorem 1.2]{Gendulphe}; the equation $g(x)=\pi$ determining the bound of case (b) has no correspondent in \cite{Gendulphe}. It is distinguished from the first by the fact that the maximum given by its solution is at most the largest boundary length $b_3$. 

\subsection*{Acknowledgements} Sections \ref{packrad} and \ref{three hole} of this paper are adapted from the second author's 2018 University of Pittsburgh PhD thesis, directed by the first author. We thank thesis committee members Tom Hales, Chris Lennard, and Matt Stover for helpful feedback. We are also grateful to the referee for helpful comments and references.

\section{Bounds on packing radius}\label{packrad}

In this section we prove Theorem \ref{maximal rad}. Its upper bound follows from the more general Proposition \ref{maximal rad} below, which gives an upper bound $r^k_{\chi,n,b}$ on the radius of packings of hyperbolic surfaces with boundary by $k$ equal-radius disks, for arbitrary $k\in\mathbb{N}$.  This result extends Proposition 0.2 of \cite{DeB_many}, itself a follow-up to \cite{DeB_Voronoi}, to the with-boundary setting.  Here, given a hyperbolic surface $F$ with geodesic boundary, we double it across $\partial F$ to produce a boundaryless hyperbolic surface $\mathit{DF}$.  Then for a finite set $\cals\subset F$, we analyze the centered dual complex of $\cals\cup\overline{\cals}$, where $\overline\cals\subset \mathit{DF}$ is the reflection of $\cals$ across $\partial F$.  This complex is itself reflection-invariant, by Lemma \ref{Voronoi preserved} below.

Before proceeding to the proofs, we give a few definitions. A \textit{hyperbolic half-plane} is the closure of a component of $\mathbb{H}^2-\gamma$, where $\gamma$ is a geodesic in the hyperbolic plane $\mathbb{H}^2$.  Its \textit{boundary} is the geodesic $\gamma$.  A \textit{hyperbolic surface with geodesic boundary} is a Hausdorff, second-countable topological space with an atlas of charts to hyperbolic half-planes, such that transition functions are restrictions of hyperbolic isometries.  For such a surface $F$, the \textit{boundary} $\partial F$ is the set of points mapped into half-plane boundaries by chart maps, and the \textit{interior} is $\mathit{int}\,F = F-\partial F$.  $F$ is \textit{complete} if its universal cover is identified (via a developing map) with the intersection of a countable collection of hyperbolic half-planes with pairwise disjoint boundaries.

\begin{definition}
For a hyperbolic surface $F$ with geodesic boundary, the \textit{double} of $F$ is the quotient space $\mathit{DF} = F \cup \overline{F} / \sim$, where $\overline{F}$ is a second copy of $F$ and $x\sim y$ if and only if $y=x$ or $x\in\partial F$ and $y=\bar{x}$ is the point of $\overline F$ corresponding to $x$. The \textit{reflection across $\partial F$} is the map $r\co\mathit{DF}\to\mathit{DF}$ that exchanges $x$ and $\bar{x}$ for each $x\in F$.\end{definition}

We give $\mathit{DF}$ the structure of a hyperbolic surface (without boundary) as follows.  For $x\in\mathit{int}\,F$ and a chart map $\phi\co U\to\mathbb{H}^2$, where $U\subset F$ is open, let $U_0=U\cap\mathit{int}\,F$ and let $\phi_0$ be the restriction of $\phi$ to $U_0$.  For the corresponding point $\bar{x}\in\overline{F}$, let $\overline{U}_0 = r(U_0)$ and $\bar\phi_0 = \rho\circ\phi_0\circ r$, where $\rho\co\mathbb{H}^2\to\mathbb{H}^2$ is the reflection through the boundary geodesic of the half-plane to which $\phi$ maps.  For $x\in \partial F$ we take $U_0 = U\cup r(U)$ and define $\phi_0$ as $\phi$ on $U$ and $\rho\circ\phi\circ r$ on $r(U)$.

It is an exercise to show that these chart maps have isometric transition functions and that the reflection $r$ across $\partial F$ is an isometry of the resulting hyperbolic structure.  It is also a fact that $\mathit{DF}$ is complete if and only if $F$ is complete.  In the complete case there is a locally isometric universal cover $\pi\co\mathbb{H}^2\to \mathit{DF}$, and the restriction of this cover to any component of the preimage of $F$ is its universal cover.


Our first few results describe aspects of the structure of the Voronoi tessellation of $\mathit{DF}$ determined by $\cals\cup\overline\cals$, for a locally finite set $\cals\subset F$. To begin, we recall the general definition of the Voronoi tessellation of a complete hyperbolic surface $F$ determined by a locally finite set $\cals$. (Compare with \cite[\S 1]{DeB_Voronoi}.)  Fix a locally isometric universal cover $\pi\co\mathbb{H}^2\to F$ and take $\widetilde\cals = \pi^{-1}\cals$. For each $s\in\widetilde\cals$ we define an associated Voronoi two-cell $V_s$ as
\[ V_s = \{ x\in\mathbb{H}^2\,|\, d(s,x)\le d(t,x)\ \forall\ t\in\widetilde\cals\,\}. \]
Each $V_s$ so-defined is a hyperbolic polygon; for $t\ne s$, $V_t$ intersects $V_s$ only at an edge or vertex (if at all); and the union  of all such $V_s$ covers $\mathbb{H}^2$. The \textit{Voronoi tessellation of $\mathbb{H}^2$ determined by $\widetilde\cals$} is the locally finite cell complex with two-cells the $V_s$ for $s\in\widetilde\cals$, one-cells the edges of such $V_s$, and zero-cells their vertices.

Each Voronoi edge is of the form $e = V_s\cap V_t$, where $s,t\in\widetilde{\cals}$ determine Voronoi two-cells $V_s$ and $V_t$, and we define the \textit{geometric dual} of $e$ to be the geodesic arc $\gamma_{st}$ joining $s$ to $t$. Following \cite[Dfn.~2.1]{DeB_Voronoi}, we say a Voronoi edge $e$ is \textit{non-centered} if $e$ does not intersect its geometric dual in $\mathit{int}\,e$ (see the figure below) and define the \textit{non-centered Voronoi subgraph} determined by $\widetilde{\cals}$ as the set of non-centered Voronoi edges, together with their endpoints.

\begin{figure}[ht]
\begin{tikzpicture}

\begin{scope}[xshift=-2.2in]

\node [above] at (-2.7,0) {\large Centered};
\node [below] at (-2.7,0) {\large Case};

\draw [very thick] (-0.577,-1) -- (0,0) -- (1.1547,0);
\draw [very thick] (-0.577,1) -- (0,0);
\fill [color=black] (0.5,-0.866) circle [radius=0.05];
\fill [color=black] (0.5,0.866) circle [radius=0.05];
\fill [color=black] (-1,0) circle [radius=0.05];

\node [right] at (0.5,-0.866) {\small $s$};
\node [right] at (0.5,0.866) {\small $t$};
\node [above] at (1,0) {\small $e$};

\draw [thin] (0.5,-0.866) -- (0.5,0.866);

\end{scope}

\begin{scope}[xshift=1in]

\node [above] at (-2.9,0) {\large Non-centered};
\node [below] at (-2.7,0) {\large Case};

\draw [very thick] (-0.8512,-0.7802) -- (0,0) -- (1.1547,0);
\draw [very thick] (-0.8512,0.7802) -- (0,0);
\fill [color=black] (-0.087,-0.996) circle [radius=0.05];
\fill [color=black] (-0.087,0.996) circle [radius=0.05];
\fill [color=black] (-1,0) circle [radius=0.05];

\node [right] at (-0.087,-0.996) {\small $s$};
\node [right] at (-0.087,0.996) {\small $t$};
\node [above] at (1,0) {\small $e$};

\draw [thin] (-0.087,-0.996) -- (-0.087,0.996);

\end{scope}

\end{tikzpicture}
\caption{\label{big fur}A Voronoi edge $e = V_s\cap V_t$ and its geometric dual $\gamma_{st}$, when $e$ is centered (left) vs.~non-centered (right). (Voronoi edges in bold; $\gamma_{st}$ drawn thin.)}
\end{figure}
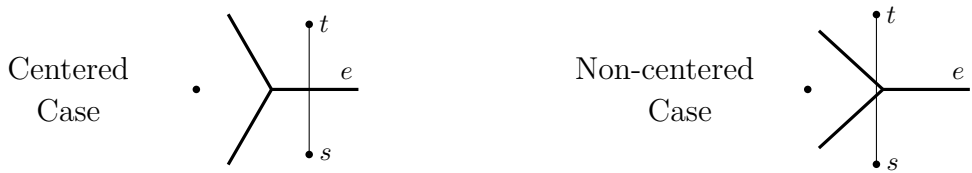

Being canonically determined by the geometry of $\widetilde\cals$, the Voronoi tessellation is invariant under isometries of $\mathbb{H}^2$ that preserve $\widetilde\cals$. The same holds true for the non-centered Voronoi subgraph, cf.~\cite[Lemma 5.4]{DeB_Voronoi}. In particular, they are invariant under the isometric $\pi_1 F$-action by covering transformations, so they project respectively to the \textit{Voronoi tessellation of $F$ determined by $\cals$} --- a locally finite cell decomposition of $F$ ---  and its \textit{non-centered Voronoi subgraph}.

The role of the complete hyperbolic surface $F$ of this definition is played in the current setting by the double $\mathit{DF} = F\cup\overline{F}$ of a hyperbolic surface $F$ with geodesic boundary, and the role of $\cals$ is played by $\cals\cup\overline\cals$ for a locally finite subset $\cals$ of $F$, where $\overline\cals$ is the mirror image of $\cals$ in the mirror image $\overline{F}$ of $F$. We have:

\begin{lemma}\label{Voronoi preserved} For a complete hyperbolic surface $F$ with geodesic boundary and a finite set $\mathcal{S} \subset F$, the Voronoi tessellation of $\mathit{DF}$ determined by $\cals\cup\overline\cals$ is preserved by the reflection through $\partial F$, where $\overline\cals = \{\bar{s}\,|\,s\in\cals\}$.  This reflection also preserves the non-centered Voronoi subgraph.\end{lemma}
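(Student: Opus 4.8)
The plan is to leverage the naturality of the Voronoi tessellation that was already recorded above: both the tessellation and the non-centered Voronoi subgraph are \emph{canonically} determined by the generating set together with the hyperbolic metric, hence are preserved by any isometry of the surface that preserves the generating set. Since it was established above that the reflection $r\co\mathit{DF}\to\mathit{DF}$ is an isometry, both assertions of the lemma will reduce to the single set-theoretic fact that $r$ preserves $\cals\cup\overline\cals$. So the whole argument is to verify that fact and then push it through the universal cover, where the tessellation is actually defined.

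First I would record that $r(\cals\cup\overline\cals)=\cals\cup\overline\cals$. By definition $r$ exchanges $x$ with $\bar x$ for each $x\in F$, so $r(s)=\bar s\in\overline\cals$ for every $s\in\cals$, and, as $r$ is an involution, $r(\bar s)=s\in\cals$ for every $\bar s\in\overline\cals$; thus $r(\cals)=\overline\cals$, $r(\overline\cals)=\cals$, and their union is preserved. (Points of $\cals$ lying on $\partial F$ are fixed by $r$ and lie in $\cals\cap\overline\cals$, which causes no trouble.) Next I would transfer to the cover. Fixing a locally isometric universal cover $\pi\co\mathbb{H}^2\to\mathit{DF}$, writing $\mathcal T=\cals\cup\overline\cals$ and $\widetilde{\mathcal T}=\pi^{-1}(\mathcal T)$, the isometry $r$ lifts to an isometry $\tilde r\co\mathbb{H}^2\to\mathbb{H}^2$ with $\pi\circ\tilde r=r\circ\pi$. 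For $\tilde s\in\widetilde{\mathcal T}$ we get $\pi(\tilde r(\tilde s))=r(\pi(\tilde s))\in r(\mathcal T)=\mathcal T$, so $\tilde r(\widetilde{\mathcal T})\subseteq\widetilde{\mathcal T}$; since $r=r^{-1}$, the map $\tilde r^{-1}$ is also a lift of $r$, and the same computation gives $\tilde r^{-1}(\widetilde{\mathcal T})\subseteq\widetilde{\mathcal T}$, whence $\tilde r(\widetilde{\mathcal T})=\widetilde{\mathcal T}$. Now the invariance statements recalled above apply directly: from the metric definition of $V_s$ one reads off $\tilde r(V_s)=V_{\tilde r(s)}$, so $\tilde r$ permutes the Voronoi two-cells and preserves the tessellation of $\mathbb{H}^2$; and since $\tilde r$ carries each geometric dual $\gamma_{st}$ to $\gamma_{\tilde r(s)\tilde r(t)}$ it preserves the intersection pattern defining non-centeredness, hence preserves the non-centered subgraph (cf.~\cite[Lemma 5.4]{DeB_Voronoi}). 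Projecting back by $\pi$ transfers both invariances from $\mathbb{H}^2$ and $\tilde r$ to $\mathit{DF}$ and $r$.

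The step requiring the most care is the passage to the cover: one must confirm that $r$ genuinely lifts to an isometry $\tilde r$ intertwining with $\pi$, and that this lift preserves $\widetilde{\mathcal T}$ \emph{exactly} rather than merely mapping it into a deck-translate of itself. The involution identity $r=r^{-1}$ is what makes the two-sided containment work cleanly here, so I would be careful to use it explicitly; once $\tilde r(\widetilde{\mathcal T})=\widetilde{\mathcal T}$ is in hand, everything else is a formal consequence of the metric definitions of the Voronoi cells and of the non-centered condition, and the proof closes without further computation.
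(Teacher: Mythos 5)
Your proposal is correct and follows essentially the same route as the paper's proof: pass to a locally isometric universal cover, observe that any lift $\tilde r$ of the reflection $r$ is an isometry of $\mathbb{H}^2$ preserving $\pi^{-1}(\cals\cup\overline\cals)$, invoke the canonical (isometry-equivariant) nature of the Voronoi tessellation and the non-centered subgraph, and project back to $\mathit{DF}$. The only difference is one of detail, not substance: you spell out explicitly that $r$ preserves $\cals\cup\overline\cals$ and that the lift preserves the full preimage exactly, steps the paper leaves implicit.
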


\begin{proof} Taking $\pi\co\mathbb{H}^2\to\mathit{DF}$ to be a locally isometric universal cover and $\widetilde\cals = \pi^{-1}(\cals\cup\overline\cals)$, this again follows from the observation recorded above the Lemma, that the Voronoi tessellation determined by $\widetilde\cals$ is invariant under isometries of $\mathbb{H}^2$ that preserve it. If $r\co\mathit{DF}\to\mathit{DF}$ is the reflection fixing $\partial F$ and exchanging $F$ with $\overline{F}$, this holds for any lift $\tilde{r}$ of $r$ to $\mathbb{H}^2$. The Voronoi tessellation of $\mathbb{H}^2$ determined by $\widetilde\cals$ is thus $\tilde{r}$-invariant, so its projection to $\mathit{DF}$ is $r$-invariant. The same argument gives the same assertion for the non-centered Voronoi subgraph.
\end{proof}

\begin{lemma}\label{boundary is Voronoi}  For a complete hyperbolic surface $F$ with geodesic boundary and a locally finite set $\mathcal{S} \subset \mathit{int}\,F$, $\partial F$ lies in the Voronoi graph (i.e. the one-skeleton of the Voronoi tessellation) of the double $\mathit{DF}$ of $F$ determined by $\mathcal{S} \cup \overline{\mathcal{S}}$, where $\overline{\cals} = \{\bar{s}\,|\,s\in \cals\}$. 
\end{lemma}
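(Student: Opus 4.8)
The plan is to pass to the universal cover and show that each point of $\partial F$ admits a lift that is equidistant from two \emph{distinct} nearest points of the lifted set, so that the lift lies on a Voronoi edge or vertex. First I would fix a locally isometric universal cover $\pi\co\mathbb{H}^2\to\mathit{DF}$ and set $\widetilde\cals=\pi^{-1}(\cals\cup\overline\cals)$. Since the Voronoi tessellation of $\mathit{DF}$ is by definition the $\pi$-image of the Voronoi tessellation determined by $\widetilde\cals$, it suffices to show that every point of $\pi^{-1}(\partial F)$ lies in the one-skeleton of the latter, i.e. in some intersection $V_s\cap V_t$ with $s\ne t$.

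Given $x\in\partial F$, I would choose any lift $\tilde x\in\pi^{-1}(x)$. Because $r(x)=x$, composing an arbitrary lift of $r$ with a suitable deck transformation produces a lift $\tilde r\co\mathbb{H}^2\to\mathbb{H}^2$ of the reflection $r$ with $\tilde r(\tilde x)=\tilde x$. As $r$ is an involution, $\tilde r^2$ is a deck transformation fixing $\tilde x$, hence trivial; so $\tilde r$ is an orientation-reversing isometric involution of $\mathbb{H}^2$ fixing $\tilde x$, i.e. the reflection through a geodesic $\tilde\gamma\ni\tilde x$. This $\tilde\gamma$ maps into $\partial F$, so $\tilde\gamma\subset\pi^{-1}(\partial F)$, and $\widetilde\cals$ is $\tilde r$-invariant because $\cals\cup\overline\cals$ is $r$-invariant.

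The key step would then be the following. Because the Voronoi two-cells cover $\mathbb{H}^2$, there is $s_0\in\widetilde\cals$ with $\tilde x\in V_{s_0}$; that is, $s_0$ realizes $d(\tilde x,\widetilde\cals)$. Applying the isometry $\tilde r$ and using $\tilde r(\tilde x)=\tilde x$ together with $\tilde r(\widetilde\cals)=\widetilde\cals$ shows $\tilde r(s_0)$ is also a nearest point, so $\tilde x\in V_{s_0}\cap V_{\tilde r(s_0)}$. It then remains only to see that $s_0\ne \tilde r(s_0)$, and this is exactly where the hypothesis $\cals\subset\mathit{int}\,F$ enters: the fixed set of $\tilde r$ is the geodesic $\tilde\gamma\subset\pi^{-1}(\partial F)$, while $\cals\cup\overline\cals$ is disjoint from $\partial F$, so $\widetilde\cals$ is disjoint from $\pi^{-1}(\partial F)$ and in particular from $\tilde\gamma=\mathrm{Fix}(\tilde r)$. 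Hence $s_0\notin\mathrm{Fix}(\tilde r)$ and $\tilde r(s_0)\ne s_0$. Then $V_{s_0}\cap V_{\tilde r(s_0)}$ is an edge or a vertex and contains $\tilde x$, so $\tilde x$ lies in the one-skeleton; projecting, $x$ lies in the Voronoi graph of $\mathit{DF}$, as desired.

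The only real obstacle is the distinctness $s_0\ne \tilde r(s_0)$: once one observes that the nearest-point set to a $\tilde r$-fixed point is $\tilde r$-invariant, all the content lies in ruling out the degenerate possibility that a nearest point sits on the mirror geodesic, which the \emph{strict} interiority of $\cals$ forbids. (Were $\cals$ allowed to meet $\partial F$, a point of $\cals\cap\partial F$ could be its own unique nearest point and the conclusion would fail.) The remaining assertions — that the covering of $\mathbb{H}^2$ by cells yields a nearest point, that $V_{s_0}\cap V_{\tilde r(s_0)}$ is an edge or vertex, and that the one-skeleton downstairs is the $\pi$-image of the one upstairs — are the standard facts about Voronoi tessellations recalled before the lemma.
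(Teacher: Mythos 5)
Your proposal is correct and follows essentially the same route as the paper's own proof: lift the reflection across $\partial F$ to a reflection of $\mathbb{H}^2$ fixing a geodesic in $\pi^{-1}(\partial F)$, observe that the set of nearest points of $\widetilde\cals$ to a fixed point is invariant under this reflection, and use $\cals\subset\mathit{int}\,F$ to rule out a fixed nearest point. The only difference is cosmetic: you construct the lifted reflection pointwise at a chosen lift $\tilde x$, while the paper starts from a boundary geodesic $\gamma$ upstairs and takes the lift fixing it.
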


\begin{proof}  For a set $\widetilde{\cals}\subset \mathbb{H}^2$, the Voronoi graph of $\widetilde\cals$ is characterized as the set of $x\in\mathbb{H}^2$ that have at least two closest points in $\widetilde\cals$.  Let us now take $\widetilde\cals = \pi^{-1} (\cals\cup\overline\cals)$, where $\pi\co\mathbb{H}^2\to \mathit{DF}$ is a locally isometric universal cover.  For any geodesic $\gamma$ in the preimage of $\partial F$ under $\pi$, the reflection $r\co \mathit{DF}\to \mathit{DF}$ across $\partial F$ lifts to a reflection $\tilde{r}$ of $\mathbb{H}^2$ fixing $\gamma$.  Since $\cals\cup\overline\cals$ is invariant under $r$, $\widetilde{\cals}$ is invariant under $\tilde{r}$, so for any $x\in \gamma$ and closest point $s\in\widetilde{\cals}$, $\tilde{r}(s)\in\widetilde{\cals}$ is also a closest point to $x$.  And $s\neq\tilde{r}(s)$ since $\cals\subset\mathit{int}\,F$, so $x$ lies in the Voronoi graph of $\widetilde\cals$.\end{proof}

Before stating the next result we recall a few more results and definitions from \cite{DeB_Voronoi}. By Lemma 1.5 and Fact 1.6 there, for each vertex $v$ of the Voronoi tessellation determined by a locally finite set $\widetilde\cals\subset \mathbb{H}^2$ there exists $J_v>0$, the \textit{radius} of $v$, such that $d(v,s) = J_v$ for each $s\in\widetilde\cals$ such that $V_s$ contains $v$ and $d(v,t) > J_v$ for all other $t\in\widetilde\cals$. Vertex radius is preserved by isometries of $\mathbb{H}^2$ that preserve $\widetilde\cals$, so if $\widetilde\cals$ is the preimage of a set $\cals\subset F$ under a locally isometric universal cover from $\mathbb{H}^2$ to a complete hyperbolic surface $F$ then each vertex of the Voronoi tessellation of $F$ determined by $\cals$ inherits a well-defined radius, cf.~\cite[Lemma 5.4]{DeB_Voronoi}.

Now for a complete hyperbolic surface $F$ with compact geodesic boundary and a finite set $\cals\subset F$, fix a locally isometric universal cover $\pi\co\mathbb{H}^2\to \mathit{DF}$ and let $\widetilde\cals = \pi^{-1}(\cals\cup\overline\cals)$. Lemma 5.5 of \cite{DeB_Voronoi} implies that each component $T$ of the non-centered Voronoi graph of $\mathbb{H}^2$ determined by $\widetilde\cals$ is a tree, with finite vertex set, that embeds in $\mathit{DF}$. For such a component $T$ let $v_T$ be a vertex of maximal radius among all (of the finitely many) vertices of $T$. Following Definition 2.8 of \cite{DeB_Voronoi} we call this the \textit{root vertex} of $T$. As remarked immediately below that definition, by \cite[Lemma 2.7]{DeB_Voronoi} $v_T$ is the unique vertx of $T$ with maximal radius.

\begin{lemma}\label{symmetric trees} Suppose $F$ is a complete hyperbolic surface with compact geodesic boundary, and $\cals\subset F$ is finite, and let $T$ be a component of the non-centered Voronoi subgraph of $\mathit{DF}$ determined by $\cals\cup\overline\cals$. If $T$ intersects the union of the boundary geodesics then it is taken to itself by reflection across $\partial F$, it is compact, and its root vertex is in the union of the boundary geodesics.
\end{lemma}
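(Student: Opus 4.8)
The plan is to treat the three assertions in the order reflection-invariance, then root-on-boundary, then compactness, since each uses the previous. Throughout, write $r\co\mathit{DF}\to\mathit{DF}$ for the reflection across $\partial F$; it is an isometry of $\mathit{DF}$ whose fixed-point set is exactly the union of the boundary geodesics.

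For reflection-invariance I would argue as follows. By Lemma \ref{Voronoi preserved}, $r$ preserves the non-centered Voronoi subgraph of $\mathit{DF}$, so it permutes the components of that subgraph; in particular $r(T)$ is again a component. By hypothesis $T$ contains a point $x$ of the union of the boundary geodesics, and since $r$ fixes that union pointwise we have $r(x)=x\in r(T)$. Thus $x\in T\cap r(T)$, and as distinct components are disjoint this forces $r(T)=T$.

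Granting this, the root-on-boundary claim is quick and does not require compactness. The restriction of $r$ to $T$ is an isometry taking $T$ to itself, so it permutes the (finitely many) vertices of $T$ and preserves their radii, radius being invariant under isometries preserving the defining set by \cite[Lemma 5.4]{DeB_Voronoi}. Hence $r(v_T)$ is again a vertex of maximal radius; but by \cite[Lemma 2.7]{DeB_Voronoi} the maximal-radius vertex is unique, so $r(v_T)=v_T$. Therefore $v_T$ lies in the fixed-point set of $r$, which is the union of the boundary geodesics.

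The remaining, and main, task is compactness. Since $T$ is a tree with finite vertex set by \cite[Lemma 5.5]{DeB_Voronoi}, it is compact exactly when none of its edges is unbounded, so it suffices to rule out edges that are geodesic rays. Each such ray limits to a parabolic fixed point, i.e.\ it runs out a cusp of $\mathit{DF}$; and since $\cals\subset\mathit{int}\,F$, the cusps of $\mathit{DF}$ lie in $\mathit{int}\,F\sqcup\mathit{int}\,\overline F$ and so are permuted in pairs by $r$, with none fixed. I would show that a component carrying such a ray cannot meet a boundary geodesic, using the structure of the non-centered subgraph inside a cusp region together with the reflection symmetry of $T$ and the location of its root. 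As supporting structure, along any non-centered edge the distance-to-$\widetilde\cals$ function is monotonic, and along a ray into a cusp it increases without bound, while by Lemma \ref{boundary is Voronoi} the union of boundary geodesics lies in the Voronoi graph and, being met perpendicularly by the duals of its edges, should consist of centered edges, so $T$ meets it only at vertices in the thick part. The hard part is exactly the confinement step: showing that the non-compact, cusp-associated components of the non-centered subgraph are separated from the thick part containing $\partial F$, so that a component meeting a boundary geodesic carries no ray and is therefore compact. I expect to isolate this as a separate structural statement about the non-centered subgraph in a cusp, established by the horocyclic analysis already developed for the cusped case in \cite{DeB_Voronoi}.
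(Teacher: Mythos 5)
Your first two steps (reflection-invariance and the location of the root vertex) are exactly the paper's argument and are correct. The genuine gap is in the compactness step, which you yourself flag as ``the hard part'' and never carry out: the proposed confinement statement --- that non-compact components of the non-centered subgraph are trapped in cusp neighborhoods, separated from the part of the surface containing $\partial F$ --- is deferred to a hoped-for structural lemma that is not proved, so the proof is incomplete as written. One of your supporting claims is also unjustified: an edge of the Voronoi graph lying in a boundary geodesic $\gamma$ is the equidistant locus of a mirror pair $\{s,\tilde{r}(s)\}$, and it is centered only if the foot of the perpendicular from $s$ to $\gamma$ lies in the edge's interior, which can fail when another point of $\widetilde\cals$ captures that foot; so $T$ may meet $\partial F$ in non-centered \emph{edges}, not only ``at vertices in the thick part.'' (A small separate slip: the cusps of $\mathit{DF}$ avoid $\partial F$ because $\partial F$ is compact, not because $\cals\subset\mathit{int}\,F$.)

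The paper closes the compactness step with a short argument for which you already have every ingredient. By \cite[Proposition 2.9]{DeB_Voronoi}, each component of the non-centered Voronoi subgraph has \emph{at most one} non-compact edge. If $T$ were non-compact, then since $r(T)=T$ and $r$ takes non-compact edges of $T$ to non-compact edges of $T$, its unique non-compact edge $e$ would satisfy $r(e)=e$; a geodesic ray mapped to itself by a reflection fixing its endpoint is fixed pointwise, so $e$ would lie in the fixed locus $\partial F$. But $\partial F$ is compact, while every non-compact Voronoi edge exits a cusp --- a contradiction. This uniqueness-plus-symmetry argument replaces your entire confinement analysis, and it is the step your proposal is missing.
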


	\begin{proof} Recall from Lemma \ref{Voronoi preserved} that the non-centered Voronoi subgraph is preserved by reflection across $\partial F$, so $T$ is taken to another component of the non-centered Voronoi subgraph.  But since it intersects $\partial F$, which is fixed by the reflection, it intersects its image and hence is preserved (being a component).

By Proposition 2.9 of \cite{DeB_Voronoi}, $T$ can have at most one non-compact edge, so if it were non-compact then that edge would lie in the fixed locus $\partial F$ of the reflection. This cannot be, since $\partial F$ is compact but each end of a non-compact Voronoi edge exits a cusp.  Finally, since the root vertex of $T$ is unique with maximal radius it must also lie in the fixed locus $\partial F$ of the reflection, which can be easily showed to preserve vertex radius.\end{proof}

We have already defined the geometric dual to a Voronoi edge above. Now for a vertex $v$ of the Voronoi tessellation determined by $\widetilde\cals\subset\mathbb{H}^2$, we define the geometric dual to $v$ to be the convex hull $C_v$ of the set of $s\in\widetilde\cals$ such that $v\in V_s$, cf.~\cite[Prop.~1.1]{DeB_Voronoi}. All such $s$ lie on a circle of radius $J_v$ centered at $v$, the \textit{circumcircle} of $C_v$, see Figure \ref{pig cure}. $C_v$ is a compact, convex polygon with its vertex set in $\widetilde\cals$, see \cite[Lemma 1.5]{DeB_Voronoi}.  Finally, for $s\in\widetilde\cals$ we take the geometric dual to the Voronoi two-cell $V_s$ to be $s$ itself.  The collection of geometric dual cells is a polyhedral complex (see eg. \cite[Theorem 1.2]{DeB_Voronoi}) that we call the \textit{geometric dual complex} determined by $\widetilde\cals$.  

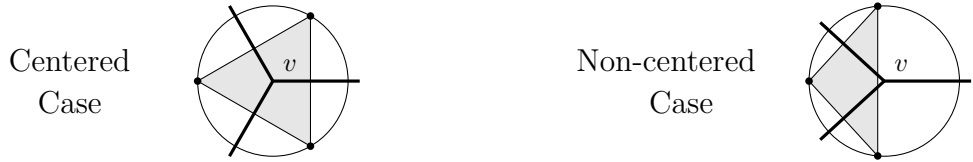
\begin{figure}[ht]
\begin{tikzpicture}

\begin{scope}[xshift=-2.2in]

\node [above] at (-2.7,0) {\large Centered};
\node [below] at (-2.7,0) {\large Case};

\draw [very thick] (-0.577,-1) -- (0,0) -- (1.1547,0);
\draw [very thick] (-0.577,1) -- (0,0);
\fill [color=black] (0.5,-0.866) circle [radius=0.05];
\fill [color=black] (0.5,0.866) circle [radius=0.05];
\fill [color=black] (-1,0) circle [radius=0.05];

\node [above right] at (0,0) {\small $v$};

\fill [opacity=0.1] (-1,0) -- (0.5,-0.866) -- (0.5,0.866);
\draw [thin] (-1,0) -- (0.5,-0.866) -- (0.5,0.866) -- cycle;
\draw (0,0) circle [radius=1];

\end{scope}

\begin{scope}[xshift=1in]

\node [above] at (-2.9,0) {\large Non-centered};
\node [below] at (-2.7,0) {\large Case};

\draw [very thick] (-0.8512,-0.7802) -- (0,0) -- (1.1547,0);
\draw [very thick] (-0.8512,0.7802) -- (0,0);
\fill [color=black] (-0.087,-0.996) circle [radius=0.05];
\fill [color=black] (-0.087,0.996) circle [radius=0.05];
\fill [color=black] (-1,0) circle [radius=0.05];

\node [above right] at (0,0) {\small $v$};

\fill [opacity=0.1] (-1,0) -- (-0.087,-0.996) -- (-0.087,0.996);
\draw [thin] (-1,0) -- (-0.087,-0.996) -- (-0.087,0.996) -- cycle;
\draw (0,0) circle [radius=1];

\end{scope}

\end{tikzpicture}
\caption{The geometric dual, pictured shaded, to a vertex $v$ of the Voronoi edge $e$ in the two cases of Figure \ref{big fur}. The circumcircle is also drawn in each case.}
\label{pig cure}
\end{figure}

If $\widetilde\cals = \pi^{-1}(\cals)$, where $\pi$ is a locally finite universal cover from $\mathbb{H}^2$ to a complete hyperbolic surface $F$ and $\cals\subset F$ is locally finite, then $\pi$ embeds the interior of each geometric dual cell in $F$ (see Remark 5.3 of \cite{DeB_Voronoi}).  We call the \textit{geometric dual complex of $F$ determined by $\cals$} the projection of the geometric dual complex of $\widetilde\cals$, and below we will count the ``vertices'' of a geometric dual two-cell in $F$ by those of a cell in $\mathbb{H}^2$ projecting to it.

\begin{lemma}\label{even vertices}  For a complete hyperbolic surface $F$ and a finite set $\cals\subset\mathit{int}\,F$, the  geometric dual complex of $\mathit{DF}$ determined by $\mathcal{S} \cup \overline{\mathcal{S}}$ is invariant under the reflection across $\partial F$, and the geometric dual to each Voronoi vertex in $\partial F$ has an even number of vertices.
\end{lemma}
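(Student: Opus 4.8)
The plan is to handle the two assertions separately, both by passing to a lift in $\mathbb{H}^2$ and exploiting that the geometric dual is canonically determined by $\widetilde\cals = \pi^{-1}(\cals\cup\overline\cals)$, where $\pi\co\mathbb{H}^2\to\mathit{DF}$ is a locally isometric universal cover.

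For the invariance claim, I would argue exactly as in Lemma \ref{Voronoi preserved}. Each cell of the geometric dual complex is defined purely in terms of the metric geometry of $\widetilde\cals$: the dual to $V_s$ is $s$, the dual to a Voronoi edge $V_s\cap V_t$ is the geodesic arc $\gamma_{st}$, and the dual to a Voronoi vertex $v$ is the convex hull of $\{s\in\widetilde\cals : v\in V_s\}$. Hence any isometry of $\mathbb{H}^2$ preserving $\widetilde\cals$ carries the geometric dual complex to itself. The reflection $r$ across $\partial F$ lifts to a reflection $\tilde r$ of $\mathbb{H}^2$ that preserves $\widetilde\cals$ (because $\cals\cup\overline\cals$ is $r$-invariant), so the geometric dual complex of $\mathbb{H}^2$ is $\tilde r$-invariant and its projection to $\mathit{DF}$ is $r$-invariant.

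For the even-vertices claim, fix a Voronoi vertex $v\in\partial F$ and, following the convention recalled above, count the vertices of its dual $C_v$ by those of a lift. Choose a lift $\tilde v$ of $v$; it lies on some geodesic $\gamma\subset\pi^{-1}(\partial F)$, and as in the proof of Lemma \ref{boundary is Voronoi} the reflection $r$ lifts to a reflection $\tilde r$ of $\mathbb{H}^2$ fixing $\gamma$ pointwise, so $\tilde r(\tilde v)=\tilde v$. Since $\tilde r$ preserves $\widetilde\cals$ and fixes $\tilde v$, it permutes the set $A = \{s\in\widetilde\cals : \tilde v\in V_s\}$: indeed $\tilde v\in V_s$ forces $\tilde v = \tilde r(\tilde v)\in \tilde r(V_s) = V_{\tilde r(s)}$. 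The points of $A$ all lie on the circumcircle of $C_{\tilde v}$, hence are in convex position, so $A$ is exactly the vertex set of $C_{\tilde v}$. It remains to see that the involution $\tilde r|_A$ is fixed-point-free: a fixed point would lie on $\gamma\subset\pi^{-1}(\partial F)$, but $\widetilde\cals$ is disjoint from $\pi^{-1}(\partial F)$ because $\cals\subset\mathit{int}\,F$ gives $\cals\cup\overline\cals\subset\mathit{int}\,\mathit{DF}$. A finite set carrying a free involution has even cardinality, so $C_v$ has an even number of vertices.

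The routine parts are the two invariance bookkeeping steps. The one point that genuinely needs care is the identification of the vertex set of $C_{\tilde v}$ with all of $A$, together with the freeness of the involution $\tilde r|_A$: these are what force the count to be even rather than merely $\tilde r$-invariant, and both rely essentially on the hypothesis $\cals\subset\mathit{int}\,F$, which keeps the points of $\cals\cup\overline\cals$ off the reflection locus $\partial F$.
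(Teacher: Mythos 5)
Your proposal is correct and follows essentially the same route as the paper's proof: lift to $\mathbb{H}^2$, use that the geometric dual complex is canonically determined by $\widetilde{\mathcal{S}}$ and hence invariant under the lifted reflection fixing the relevant boundary geodesic, and conclude evenness because the reflection permutes the dual cell's vertices without fixed points, since $\mathcal{S}\subset\mathit{int}\,F$ keeps $\widetilde{\mathcal{S}}$ off the reflection's axis. Your identification of the vertex set with the full set of nearest points via the circumcircle is a slightly more explicit bookkeeping step than the paper's, but it is the same argument.
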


\begin{proof}  Let $\widetilde\cals = \pi^{-1}(\cals\cup\overline\cals)$, for a locally finite universal cover $\pi\co\mathbb{H}^2\to \mathit{DF}$.  Since the geometric dual complex of $\widetilde\cals$ is defined geometrically it is invariant under \textit{all} isometries that preserve $\widetilde\cals$, and as in previous results this implies that the geometric dual complex of $\mathit{DF}$ determined by $\cals\cup\overline\cals$ is invariant under reflection across $\partial F$.

Now suppose $C=\pi(\widetilde{C})$ is a two-cell of the geometric dual complex of $\cals\cup\overline\cals$ that is dual to a Voronoi vertex $v\in\partial F$.  Then $\widetilde{C}$ is dual to a Voronoi vertex $\tilde{v}$ in a component $\gamma$ of $\pi^{-1}(\partial F)$, so since $\tilde{v}$ is invariant under the reflection through $\gamma$ --- which preserves $\widetilde{\cals}$ since it is a lift of the reflection across $\partial F$ --- $\widetilde{C}$ is also invariant under this reflection.  The reflection therefore preserves the vertex set of $\widetilde{C}$, and since $\cals\subset\mathit{int}\,F$ no vertex is fixed.  So the vertices come in pairs exchanged by the reflection, hence there is an even number.
\end{proof}

The geometric dual complex of a complete hyperbolic surface $F$ determined by a locally finite set $\cals$ is a subcomplex of the \textit{Delaunay tessellation} of $F$ determined by $\cals$.  In addition to the geometric dual cells, the Delaunay tessellation has one non-compact two-cell enclosing a horocyclic neighborhood of each cusp of $F$.  Each such ``horocyclic'' two-cell is the projection of a cell in the universal cover that is the convex hull of $\widetilde{\cals}\cap B$, where $\widetilde{\cals}$ is the preimage of $\cals$ and $B$ is a horoball bounded by a horocycle $S$ with the property that $S\cap\widetilde{\cals} = B\cap\widetilde{\cals}$. We may divide such cells into horocyclic ideal triangles using a collection of geodesic rays, one from each point of $B\cap\widetilde{\cals}$ to the ideal point of $B$. The Delaunay tessellation and its relation to the geometric dual complex are described in \cite{DeB_Delaunay}.

The \textit{centered dual decomposition}, constructed in \cite[\S 2]{DeB_Voronoi}, has two-cells of two forms. For each component $T$ of the non-centered Voronoi subgraph, the union of geometric dual two-cells dual to vertices of $T$, together with one of the horocyclic ideal triangles above if $T$ has a non-compact edge, is one type of cell. The simplest such example is pictured in Figure \ref{big sur}. The second type are those geometric dual two-cells not dual to a vertex of the non-centered Voronoi subgraph.

\begin{figure}[ht]
\begin{tikzpicture}

\begin{scope}[xshift=1in]

\draw [very thick, dotted] (-0.8512,0.7802) -- (0,0) -- (-0.8512,-0.7802);
\draw [very thick] (0,0) -- (0.8,0); 
\draw [very thick, dotted]  (1.245,-1) -- (0.8,0) -- (1.245,1);
\fill [color=black] (0,0) circle [radius=0.04];
\fill [color=black] (0.8,0) circle [radius=0.04];
\draw (0.8,0) circle [radius=0.07];

\fill [color=black] (-0.087,-0.996) circle [radius=0.05];
\fill [color=black] (-0.087,0.996) circle [radius=0.05];
\fill [color=black] (-1,0) circle [radius=0.05];
\fill [color=black] (2.133,0) circle [radius=0.05];

\node [above] at (0.4,0) {\small $e$};
\node [right] at (0.8,0) {\small $v_T$};

\fill [opacity=0.1] (-0.087,-0.996) -- (-1,0) -- (-0.087,0.996) -- (-0.087,-0.996) -- (2.133,0) -- (-0.087,0.996);
\draw [thin] (-0.087,-0.996) -- (-1,0) -- (-0.087,0.996) -- (-0.087,-0.996) -- (2.133,0) -- (-0.087,0.996);

\end{scope}

\end{tikzpicture}
\caption{\label{big sur} A component $T$ of the non-centered Voronoi subgraph with the single edge $e$ and root vertex $v_T$ (circled). Other Voronoi edges are dotted. The centered dual two-cell associated to $T$ is the union of the shaded triangles.}
\end{figure}
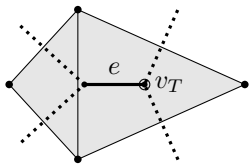

The great advantage of the centered dual complex is that the area of a two-cell is bounded below in terms only of its combinatorics and a lower bound on its edge lengths. The result below records such a bound that is relevant to the case under consideration in this paper.

\begin{lemma}\label{new bound} 
	Let $C$ be a compact 2-cell of the centered dual complex of a complete, finite-area hyperbolic surface $F$ which intersects the union of boundary geodesics, and suppose that for fixed $d>0$ each edge of $\partial C$ has length at least $d$.
	If $C$ is a quadrilateral, then its area is at least $D_0(d,d,d,d)$, that of a hyperbolic square with side lengths $d$.
	If $\partial C$ has $k>4$ edges, then Area$(C) \geq (k-4)A_m(d) + D_0(d,d,d,d)$, where $A_m(d)$ is the area of an isosceles hyperbolic triangle with two sides of length $d$, inscribed in a hyperbolic circle with its third side a diameter.
\end{lemma}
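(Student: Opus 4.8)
The plan is to adapt the area estimate of \cite[Prop.~0.2]{DeB_many} to the reflection-symmetric cells that arise here, exploiting the structural results already established. First I would record the single most useful reduction. Since $C$ meets the union of boundary geodesics, Lemma \ref{symmetric trees} says the component $T$ of the non-centered Voronoi subgraph giving rise to $C$ is compact, reflection-invariant, and has its root vertex $v_T$ on a boundary geodesic $L$; combined with Lemma \ref{even vertices} this forces $C$ itself to be symmetric across $L$, with $L$ passing through $v_T$ and meeting $\partial C$ in the interiors of exactly two edges. Hence the edges crossed by $L$ are each bisected by it, the remaining edges are interchanged in pairs, and $\partial C$ has an even number $k\ge 4$ of edges (a symmetric boundary cell cannot be a triangle). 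I would also note at the outset the identity $D_0(d,d,d,d) = 2A_m(d)$: cutting a hyperbolic square of side $d$ along a diagonal yields two isosceles triangles with two sides of length $d$ whose common third side, the diagonal, is a diameter of the square's circumscribed circle. With this identity the two displayed bounds merge into the single inequality $\mathrm{Area}(C) \ge (k-2)A_m(d)$, which is what I would actually prove.

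For the core estimate I would decompose $C$ using the geometric dual polygons $C_v$, $v\in T$, each of which is cyclic with circumradius $J_v$, and apply Gauss--Bonnet to the pieces so that a $k$-gon accrues area from $k-2$ effective triangles. The boundary edges of $C$ have length $\ge d$ by hypothesis, while the interior edges are shared between two adjacent cells. The key inputs from \cite{DeB_Voronoi} are that $v_T$ has maximal radius among vertices of $T$ (so it is the unique such vertex, by \cite[Lemma 2.7]{DeB_Voronoi}) and that each edge of $C$ is non-centered, which bounds the angle each base subtends at its circumcenter. Together these should imply that the minimal area contributed per boundary edge is that of the extremal triangle whose base is a diameter, namely $A_m(d)$, giving $\mathrm{Area}(C)\ge (k-2)A_m(d)$.

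The reflection symmetry is what pins the extremal comparison cell down to the square rather than some other polygon. Because $L$ passes through $v_T$ and bisects $C$, the minimal symmetric cyclic quadrilateral with all edges $\ge d$ centered on $L$ is the regular one, i.e.\ the square of side $d$, whose area is $D_0(d,d,d,d)=2A_m(d)$; this settles the base case $k=4$. For $k>4$ I would peel off symmetric pairs of outer pieces one at a time, each pair contributing at least $2A_m(d)$, reducing to the quadrilateral base case and recovering $(k-4)A_m(d)+D_0(d,d,d,d)$.

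I expect the main obstacle to be the bookkeeping for the shared interior edges: unlike the boundary edges they carry no length lower bound, so I must argue that the decomposition can be arranged, using that $v_T$ has maximal radius, so that every interior edge is absorbed with a non-negative net area contribution and only the $k$ genuine boundary edges drive the lower bound. The other point requiring care is verifying that the extremal per-edge triangle is exactly the diameter configuration $A_m(d)$, i.e.\ that the non-centered condition yields precisely the apex-angle bound realizing this triangle as the minimizer; this is where the monotonicity computations of \cite{DeB_Voronoi} must be invoked or redone in the present symmetric setting.
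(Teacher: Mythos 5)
Your framing is exactly right, and two of its three ingredients are precisely the paper's proof: Lemma \ref{even vertices} gives an even number of vertices, hence $k\ge 4$; cutting the square along a diagonal (equivalently, doubling the diameter triangle across its hypotenuse) gives $D_0(d,d,d,d)=2A_m(d)$; and with that identity both stated bounds collapse to the single inequality $\mathrm{Area}(C)\ge (k-2)A_m(d)$. The gap lies in how you propose to obtain that inequality. It is not something to be re-proved here: it is exactly \cite[Theorem 3.31]{DeB_Voronoi}, valid for \emph{every} compact two-cell of the centered dual complex whose edges have length at least $d$, with no reflection symmetry required (the paper invokes the same theorem for the cells disjoint from $\partial F$ in the proof of Proposition \ref{maximal rad}). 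The paper's proof is therefore three lines: parity, citation, arithmetic. Reflection symmetry enters this lemma only through the parity statement ruling out $k=3$; it plays no role in the area estimate itself, so making it carry the weight of the extremal comparison is a wrong turn.

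By contrast, your second and third paragraphs amount to re-deriving Theorem 3.31 from scratch, and as sketched the derivation does not close. The two issues you flag as ``obstacles'' --- that interior edges of the decomposition carry no length lower bound, and that the diameter configuration $A_m(d)$ is the extremal per-edge contribution --- are not loose ends of an otherwise complete argument; they are the entire technical content of the cited theorem, which \cite{DeB_Voronoi} establishes by induction over the tree $T$ together with monotonicity results for areas of cyclic polygons. Moreover, the symmetric-peeling scheme rests on claims that are unjustified or false in this generality: a centered dual two-cell is a union of geometric dual polygons glued along $T$, and while each constituent polygon is cyclic, $C$ itself need not be cyclic or even convex, so minimizing over ``symmetric cyclic quadrilaterals with edges $\ge d$'' says nothing about $C$ when $k=4$; nor can you peel off ``symmetric pairs of outer pieces'' each worth $2A_m(d)$ without already knowing the per-piece lower bound you are trying to prove. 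If you replace those two paragraphs with the citation to \cite[Theorem 3.31]{DeB_Voronoi}, what remains of your write-up is the paper's proof.
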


The notation in the statement and proof of this result comes from \cite{DeB_Voronoi}. In particular, for an $n$-tuple $(x_1,\hdots,x_n)\in(0,\infty)^n$ that records the side lengths of a convex \textit{cyclic} $n$-gon in $\mathbb{H}^2$, that is, one inscribed in a hyperbolic circle, $D_0(x_1,\hdots,x_n)$ records the area of this $n$-gon, compare \S 3.1 there. (It is a basic fact, proven in eg.~\cite{DeB_cyclic}, that a cyclic hyperbolic $n$-gon is determined up to isometry by its collection of side lengths.) 

	\begin{proof}
		By Lemma \ref{even vertices}, $C$ has an even number of vertices and therefore $k \geq 4$.  
		So Area$(C) \geq (k-2)A_m(D)$ by \cite[Theorem 3.31]{DeB_Voronoi}. 
		For an isosceles hyperbolic triangle $T$ with two sides of length $d$, inscribed in a hyperbolic circle with its third side a diameter, we observe that the union of $T$ with its reflection across this diameter is a hyperbolic square with side lengths $d$. Thus $2A_m(d) = D_0(d,d,d,d)$. It follows that if $k>4$ then 
\[ \mathrm{Area}(C) \geq (k-2)A_m(d) = (k-4)A_m(d) + 2A_m(d) = (k-4)A_m(d) + D_0(d,d,d,d), \]
and if $k=4$ then Area$(C) \geq 2A_m(d) = D_0(d,d,d,d)$.\end{proof}
	
The centered dual complex may not tile its entire underlying space, so we add a possibly empty collection of horocyclic ideal triangles to it to produce the \textit{centered dual plus}, see \S 5 of \cite{DeB_Voronoi}. This decomposition has underlying space $\mathbb{H}^2$; or after projecting to $F$, underlying space $F$. We will use it to prove our bound, which extends that of Proposition 0.2 of \cite{DeB_many} to the context of surfaces with boundary.

\begin{prop}\label{maximal rad}  Fix natural numbers $\chi<0$ and $b,n\ge 0$. For a complete, finite-area hyperbolic surface $F$ with compact geodesic boundary that has $n$ cusps, $b$ boundary components and Euler characteristic $\chi$, and any packing of $F$ by $k$ disks of radius $r>0$ ($k \in \mathbb{N}$), we have $r\leq r_{\chi, n, b}^k$, where $r_{\chi, n, b}^k$ is the unique solution to:

$$ 3\left(2-\left(\frac{2\chi+b+n}{k}\right) \right) \alpha(r_{\chi, n, b}^k) +\frac{2n}{k}\beta(r_{\chi, n, b}^k) + \frac{2b}{k} \gamma(r_{\chi, n, b}^k) = 2\pi. $$
Here, $\alpha(r) = 2\sin^{-1}{\left(\frac{1}{2\cosh{r}}\right)}, 
	\beta(r) = \sin^{-1}{\left(\frac{1}{\cosh{r}}\right)},$ and 
	$\gamma(r) = 2\sin^{-1}{\left(\frac{1}{\sqrt{2}\cosh{r}} \right)}$
	each measure vertex angles, respectively that of an equilateral triangle with side length $2r$, 
	at finite vertices of a horocyclic ideal triangle with compact side length $2r$, 
	and of a square with side length $2r$.

A complete, finite-area, $n$-cusped hyperbolic surface $F$ with $b$ geodesic boundary components has an equal-radius packing by disks of radius $r=r_{\chi, n, b}^k$ if and only if the disks are centered at the internal vertices of a decomposition of $F$ into equilateral triangles, $n$ horocyclic ideal triangles, all with compact side length $2r_{\chi, n, b}^k$, and $b$ Saccheri quadrilaterals with leg length $r_{\chi, n, b}^k$ and summit length $2r_{\chi, n, b}^k$, all with disjoint interiors, such that there are $k$ vertices in $\mathit{int}\,F$ and $\partial F$ is the union of the quadrilaterals' bases.
\end{prop}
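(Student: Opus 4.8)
The plan is to prove the bound by doubling $F$ across its boundary and running the centered dual analysis of \cite{DeB_Voronoi, DeB_many} on the resulting boundaryless surface $\mathit{DF}$, while tracking the reflection symmetry recorded in Lemmas \ref{Voronoi preserved}--\ref{even vertices}. Fix a packing of $F$ by $k$ disks of radius $r$ with center set $\cals\subset\mathit{int}\,F$; doubling produces a packing of $\mathit{DF}$ by $2k$ disks of radius $r$ centered at $\cals\cup\overline\cals$, and because these disks have disjoint interiors any two distinct centers are at distance at least $2r$. I then form the centered dual plus decomposition of $\mathit{DF}$ determined by $\cals\cup\overline\cals$. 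By Lemma \ref{boundary is Voronoi} the boundary geodesics lie in its Voronoi graph, and by Lemma \ref{even vertices} each compact two-cell meeting $\partial F$ is a reflection-symmetric $2m$-gon whose vertices are centers.

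Next I would record an area lower bound for each piece of a convenient refinement. Triangulating each compact interior cell along diagonals between centers (all of length at least $2r$) yields triangles with every side at least $2r$, each of area at least $D_0(2r,2r,2r)=\pi-3\alpha(r)$, that of the equilateral triangle of side $2r$. Each horocyclic ideal triangle has compact side at least $2r$, hence area at least $\pi-2\beta(r)$. Finally, by Lemma \ref{new bound} a compact $2m$-gon meeting $\partial F$ has area at least $(2m-4)A_m(2r)+D_0(2r,2r,2r,2r)=(2m-2)A_m(2r)$, and since a square of side $2r$ is two copies of the model isosceles triangle, $A_m(2r)=\pi-2\gamma(r)$.

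Write $T_c$, $T_h$, and $M=\sum_i(2m_i-2)$ for the number of interior triangles, the number of horocyclic ideal triangles, and the weighted count of boundary cells, respectively. Computing the Euler characteristic of $\mathit{DF}$ with its cusps filled gives the single relation $T_c+T_h+M=4k-4\chi$. Summing the area bounds and comparing with $\mathrm{Area}(\mathit{DF})=-4\pi\chi$ yields $3T_c\,\alpha(r)+2T_h\,\beta(r)+2M\,\gamma(r)\ge 4\pi k$. The remaining ingredients are the lower bounds $T_h\ge 2n$ (each of the $2n$ cusps of $\mathit{DF}$ meets at least one horocyclic ideal triangle) and $M\ge 2b$ (each of the $b$ boundary geodesics is tiled by the bases of symmetric boundary cells, each contributing at least $2$ to $M$). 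Since one checks that $3\alpha(r)\ge 2\beta(r)$ and $3\alpha(r)\ge 2\gamma(r)$ for all $r\ge 0$, eliminating $T_c$ via the Euler relation turns the displayed inequality into $4\pi k\le 2\Phi(r)$, where $\Phi(r)=3(2k-(2\chi+b+n))\alpha(r)+2n\beta(r)+2b\gamma(r)$ is $k$ times the left side of the defining equation. As $\Phi$ is strictly decreasing in $r$ with $\Phi(r^k_{\chi,n,b})=2\pi k$, this gives $r\le r^k_{\chi,n,b}$.

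For the equality characterization I would trace back through the chain of inequalities: $r=r^k_{\chi,n,b}$ forces every interior triangle to be equilateral of side exactly $2r$, every horocyclic ideal triangle to have compact side $2r$ with exactly one per cusp, and every boundary cell to be a square of side $2r$ with exactly one per boundary geodesic, whose halves are the Saccheri quadrilaterals of the statement. Conversely, for a surface assembled from these pieces the same area accounting is an equality, so its packing by the inscribed disks has radius exactly $r^k_{\chi,n,b}$. The main obstacle is the bookkeeping around $\partial F$: one must verify that the symmetric boundary cells are even-sided with the square-based area bound of Lemma \ref{new bound}, and that they tile the boundary geodesics so that $M\ge 2b$, since it is exactly this analysis (absent in the boundaryless case of \cite{DeB_Voronoi, DeB_many}) that produces the new term $2b\,\gamma(r)$ and the Saccheri quadrilaterals in the extremal decomposition.
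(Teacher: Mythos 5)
Your overall skeleton (double across $\partial F$, run the centered dual plus decomposition on $\mathit{DF}$, bound cell areas, compare with Gauss--Bonnet via an Euler-characteristic count) is the same as the paper's, but there is a fatal gap in your area bound for compact interior cells. You claim that triangulating such a cell along diagonals between centers yields triangles with all sides at least $2r$, hence each of area at least $D_0(2r,2r,2r)=\pi-3\alpha(r)$. That implication is false in hyperbolic geometry: a triangle with all sides at least $2r$ can be arbitrarily thin, e.g.\ an isosceles triangle with two sides of length $L\gg 2r$ and included angle $\epsilon$ has third side longer than $2r$ but area tending to $0$ as $\epsilon\to 0$. Such thin triangles genuinely arise as geometric dual (Delaunay) cells exactly when the circumcenter falls outside the cell --- the ``non-centered'' situation of Figures \ref{big fur} and \ref{pig cure} --- and your diagonals can reproduce them, so the per-triangle bound fails precisely where it is needed. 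This is the entire reason the centered dual machinery exists: the paper instead invokes \cite[Theorem 3.31]{DeB_Voronoi}, a nontrivial global bound $\mathrm{Area}(C_i)\geq(n_i-2)A_m(d)$ on a whole centered dual cell, together with \cite[Theorem 4.16]{DeB_Voronoi} for the non-compact cells (which, contrary to your setup, need not be single horocyclic ideal triangles and can have more than three edges).

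Two further steps that you flag but do not prove are also genuinely needed. First, your count $M\geq 2b$ requires that distinct boundary geodesics meet \emph{distinct} centered dual cells; without this, one cell could serve all $b$ components and contribute only $2$ to $M$. The paper proves this with the tree argument of Lemma \ref{symmetric trees}: if the tree $T$ underlying one boundary cell contained Voronoi vertices in two components of $\partial F$, an edge path $\gamma$ joining them together with its reflection $r(\gamma)$ would form a cycle in $T$, contradicting that $T$ is a tree. Second, your converse --- ``the same area accounting is an equality, so its packing by the inscribed disks has radius exactly $r^k_{\chi,n,b}$'' --- does not follow: equality in the area computation says nothing about whether disks of radius $r^k_{\chi,n,b}$ centered at the internal vertices of the decomposition are embedded and pairwise non-overlapping. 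The paper devotes the final portion of its proof to exactly this, checking polygon by polygon (via \cite[Lemma 5.12]{DeB_Voronoi} for equilateral triangles, and perpendicular-bisector arguments for horocyclic ideal triangles and Saccheri quadrilaterals) that each disk meets each polygon in a full sector and that distinct sectors do not overlap.
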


The strategy of proof follows that of Proposition 0.2 of \cite{DeB_many}. The basic observation is that if $F$ has injectivity radius $r$ at $p$ then every edge of the centered dual decomposition plus of $F\cup\overline{F}$ determined by $\{p,\bar{p}\}$ has length at least $2r$. Lemma \ref{new bound} and related results then yield lower bounds on two-cell areas, which sum to more than the area of $F$ if $r$ exceeds $r_{\chi,n,b}^k$, or if equality holds and any such two-cells are not as simple as possible.  

\begin{proof}
	Let $F$ be a complete, finite-area hyperbolic surface of Euler characteristic $\chi$ with $n$ cusps and $b$ geodesic boundary components equipped with an equal-radius packing by $k$ disks of radius $r>0$.  Note that the set $\cals$ of disk centers is contained in $\mathit{int}\,F$ since $r$ is positive.
	For the double $\mathit{DF}$ we have $\chi(DF) = 2\chi(F)$. 
	Let $C_1, \ldots, C_m$ be the two-cells of the centered dual complex plus of $\cals\cup\overline\cals$, where $\overline{\cals} = r(\cals)$. By the Gauss-Bonnet theorem, we have
	\begin{equation}
	\mathrm{Area}(C_1) + ... + \mathrm{Area}(C_m) = -2\pi\chi(DF) = -4\pi\chi
	\end{equation}
	Order the cells so that $C_i$ is non-compact if and only if $i\leq m_0$ for some fixed $m_0 \leq m$, and for each $i$ let $n_i$ be the number of edges of $C_i$.
	
	For any non-compact cell ($i\leq m_0$,) we have by \cite[Theorem 4.16]{DeB_Voronoi}  that 
	$$\mathrm{Area}(C_i) \geq D_0(\infty, d, \infty) + (n_i-3)D_0(d,d,d),$$
	Notice that because  the radius-$r$ disks centered at the vertices of each cell are disjoint, each compact edge must have length at least $d:=2r$.  
	Equality holds if and only if $n_i=3$, i.e.~$C_i$ is a horocyclic ideal triangle, and the compact side length is $d$.
	
	For each compact cell ($m_0 < i \leq m$,) we have two cases.
	If $C_i$ does not intersect the axis of reflection of $\mathit{DF}$, then we apply  \cite[Theorem 3.31, Corollary 3.5]{DeB_Voronoi} to obtain the bound
	$$\mathrm{Area}(C_i) \geq (n_i-2)A_m(d) \geq (n_i-2)D_0(d,d,d).$$ Here equality holds if and only if $C_i$ is an equilateral triangle with side length $d$.
	
	On the other hand, if $C_i$ does intersect the axis of reflection, then by Lemma \ref{new bound} and \cite[Corollary 3.5]{DeB_Voronoi} we have
	$$\mathrm{Area}(C_i) \geq (n_i-4)A_m(d) + D_0(d,d,d,d) \geq (n_i-4)D_0(d,d,d) + D_0(d,d,d,d),$$
	with equality if and only if $n_i = 4$, i.e.~$C_i$ is a square with side length $d$.  Note that this is strictly greater than $(n_i-2) D_0(d,d,d)$, since $D_0(d,d,d,d) > 2 D_0(d,d,d)$.
	
	We claim there are at least $b$ such cells. By Lemma \ref{boundary is Voronoi} there is at least one Voronoi vertex in each component of $\partial F$. For such a vertex $v$, let $C_v$ be the centered dual two-cell containing $v$, either a centered Delaunay cell geometrically dual to $v$ or a union of Delaunay two-cells dual to vertices of a component $T$ of the non-centered Voronoi subgraph. The claim will follow from the fact that in the latter case, $T$ contains no Voronoi vertex in any other component of $\partial F$, since this implies that $C_v \neq C_w$ if $v$ and $w$ lie in different components of $\partial F$. By Lemma \ref{symmetric trees}, $T$ is preserved by the reflection $r$ across $\mathit{DF}$, so if it contained Voronoi vertices $v$ and $w$ in different components of $\partial F$ then for any edge path $\gamma$ in $T$ joining $v$ to $w$, $\gamma\cup r(\gamma)$ would form a cycle in $T$, contradicting that it is a tree. This proves the fact and hence the claim.
	
	Applying the bounds above to our Gauss-Bonnet formula results in
	\begin{align*}
	 -4\pi\chi  & \geq \sum_{i=1}^{m_0} \left( D_0(\infty, d, \infty) + (n_i-3)D_0(d,d,d) \right) \\
			&	\qquad		+ \sum_{i=m_0+1}^{m_0+b} \left((n_i-4)D_0(d,d,d) + D_0(d,d,d,d)\right)
	 							+ \sum_{i=m_0+b+1}^m \left((n_i-2)D_0(d,d,d)  \right) \end{align*}
By the comments above, equality holds here only if $n_i = 3$ for each $i\leq m_0$, $n_i =4$ for $m_0< i\leq m_0+b$, and there are exactly $b$ cells intersecting the axis of reflection.  Rearranging the inequality now yields:
	\begin{align*}
	-4\pi\chi	& \geq m_0 \cdot D_0(\infty, d, \infty) + \left( \sum_{i=1}^m (n_i-2) - m_0 - 2b\right) \cdot D_0(d,d,d) + b\cdot D_0(d,d,d,d)\\
			& \geq 2n \cdot D_0(\infty, d, \infty)  + \left( \sum_{i=1}^m n_i-2m-2b -2n\right) \cdot D_0(d,d,d) + b\cdot D_0(d,d,d,d)\\
			& = 2n \cdot (\pi - 2\beta(r)) +(2e-2m-2b-2n)\cdot (\pi - 3\alpha(r)) +b \cdot (2\pi - 4\gamma(r))\\
			& = (2e-2m) \pi -4n\beta(r) - 3(2e-2m-2b-2n)\alpha(r) - 4b\gamma(r) \\
			& = (4k-4\chi) \pi -4n\beta(r) - 3(4k-4\chi-2b-2n)\alpha(r) - 4b\gamma(r)
	 \end{align*}

	 To move from the first to second line we note that $m_0\geq 2n$, since each non-compact cell has only one ideal vertex and $\mathit{DF}$ has $2n$ cusps, and rewriting $m_0$ as $2n +m_0 -2n$, we use the fact that $D_0(\infty, d, \infty) > D_0(d,d,d)$. Thus equality holds between these lines if and only if $m_0 = 2n$. To move to the last line we compactify $\mathit{DF}$ by filling in each cusp with a unique point then perform an Euler characteristic computation: 
	 \[ 2\chi+2n = v-e+f = (2k+2n)-e+m \implies 2e-2m = 4k-4\chi. \]	
	 After further rearrangement, we have that
	 \begin{align*}
	 	 4k\pi &\leq 4n\beta(r)+3(4k-4\chi-2b-2n)\alpha(r) +4b\gamma(r)\\
			\implies 2\pi & \leq 3\left(2-\left(\frac{2\chi+b+n}{k}\right) \right) \alpha(r) +\frac{2n}{k}\beta(r) + \frac{2b}{k} \gamma(r)
	 \end{align*}
	 
	 Since $\alpha, \beta,$ and $\gamma$ are decreasing functions in $r$ we see that $r \leq r_{\chi,n,b}^k$ 
	 and equality holds if and only if the compact cells away from the boundary are triangles with compact side length $d= 2r_{\chi,n,b}^k$, the cells intersecting the boundary are squares with side length $d$, there are exactly $b$ of these, and there are exactly $2n$ non-compact cells, each a horocyclic ideal triangle with compact side length $d$. Since the entire complex is $r$-invariant, in the case that equality holds the square cells are divided into symmetric Saccheri quadrilaterals by their intersections with $\partial F$, each with summit length $2r_{\chi, n, b}^k$ and leg length $r_{\chi, n, b}^k$. And exactly $n$ of the horocyclic ideal triangles (and half of the equilateral triangles) lie in $F\subset\mathit{DF}$.
	 
	 At this point, we have showed that the radius $r$ of a $k$-disk packing satisfies $r\leq r_{\chi, n, b}^k$ and equality holds if and only if the Delaunay tessellation of $\mathit{DF}$ yields the prescribed decomposition. It remains to show that beginning with such a decomposition, there exists a packing of $F$ by $k$ disks of radius $r_{\chi, n, b}^k$, i.e. that the disks are embedded without overlapping.

	 Let us define a \textit{sector} of a metric disk as its intersection with two half-planes whose boundaries contain the disk's center. We will center the disks at the internal vertices of the given decomposition. To guarantee they form a packing, we will show for each polygon $P$ that if $B$ is a disk centered at a vertex $p$ of $P$, then $B\cap P$ is a sector of $B$ and $B\cap P$ does not overlap $B'\cap P$ for any disk $B'$ centered at another vertex $p'$ of $P$.
	
	Each equilateral triangle $T$ is a centered polygon with side lengths $2r_{\chi,n,b}^k$, thus \cite[Lemma 5.12]{DeB_Voronoi} guarantees that the open disks of radius $r$ centered at each vertex of the triangle intersect with $T$ in full sectors and that these disks do not overlap in $T$ for any $r\leq r_{\chi, n, b}^k$. 
	
	For a horocyclic ideal triangle $R$ with compact side length $2r_{\chi,n,b}^k$, notice that the perpendicular bisector $\gamma$ of the compact side $\rho$ is an axis of reflection for $R$. It follows from the hyperbolic law of cosines that the closest point of $\gamma$ to either vertex of $R$ on $\rho$ is the point at $\gamma \cap \rho$. Since the length of $\rho$ is $2r_{\chi,n,b}^k$, we are guaranteed that $\gamma$ divides a disk of radius $r\leq r_{\chi, n, b}^k$ centered at one vertex of the side $\rho$ of $R$ from the corresponding disk centered at the other such vertex, so they do not overlap. And for either such disk $B$, the full sector of $B$ determined by the geodesics containing $\rho$ and the non-compact side of $R$ containing its center $v$ lies in $R$, since $\gamma$ divides this sector from the side of $R$ opposite $v$.
	
The result follows from the fact that for any triangle $T$ and a disk $B$ of radius $r$ centered at a vertex $v$ of $T$, $B\cap T$ is a full sector of $B$ if and only if $d(v,\gamma) \leq r$, where $\gamma$ is the side of $T$ opposite $v$.
	
	
	For a Saccheri quadrilateral $Q$, again we see that the perpendicular bisector $\gamma$ of the summit $\sigma$ is an axis of reflection for $Q$, as its legs have equal length $r_{\chi, n, b}^k$. Thus the closest point of $\gamma$ to either vertex of $Q$ on $\sigma$ is $\gamma \cap \sigma$. Since the length of $\sigma$ is $2r_{\chi, n, b}^k$, we are guaranteed that $\gamma$ divides a disk of radius $r\leq r_{\chi, n, b}^k$ centered at one summit vertex of $Q$ from the corresponding disk centered at the other such vertex. Additionally, as the base of $Q$ is perpendicular to the legs of $Q$ by the definition of a Saccheri quadrilateral, the vertices at the base are also the closest points from the summit vertices on the base. Thus for any disk $B$ of radius $r\leq r_{\chi, n, b}^k$ centered at a vertex on the summit, $B\cap Q$ is a full sector.
	
Now suppose we are given a complete, finite-area, $n$-cusped hyperbolic surface $F$ with $b$ geodesic boundary components decomposed into equilateral triangles, $n$ horocyclic ideal triangles, all with compact side length $2r_{\chi, n, b}^k$, and $b$ Saccheri quadrilaterals with leg length $r_{\chi, n, b}^k$ and summit length $2r_{\chi, n, b}^k$, all with disjoint interiors, such that there are $k$ vertices in $\mathit{int}\,F$ and $\partial F$ is the union of the quadrilaterals' bases. The observations above show that $F$ has a packing by disks of radius $r_{\chi,n,b}^k$ centered at the internal vertices of this decomposition, since we can assemble each disk from its sectors of intersection with the polygons of the decomposition and show that the resulting collection is embedded without overlapping by inspecting the polygons.
\end{proof}
	
\begin{thm}\label{maximal injrad}
Fix natural numbers $\chi<0$ and $b,n\ge 0$. For a complete, finite-area hyperbolic surface $F$ with compact geodesic boundary that has $n$ cusps, $b$ boundary components and Euler characteristic $\chi$, and any $p\in F$, $\mathit{injrad}_p(F) \leq r_{\chi, n, b}$, where $r_{\chi, n, b}$ is the unique solution to:
$$ 3\left(2-\left(2\chi+b+n\right) \right) \alpha(r_{\chi, n, b}) +2n\beta(r_{\chi, n, b}) + 2b \gamma(r_{\chi, n, b}) = 2\pi. $$
Here, $\alpha(r) = 2\sin^{-1}{\left(\frac{1}{2\cosh{r}}\right)}$,  
	$\beta(r) = \sin^{-1}{\left(\frac{1}{\cosh{r}}\right)}$, and 
	$\gamma(r) = 2\sin^{-1}{\left(\frac{1}{\sqrt{2}\cosh{r}} \right)}$
	each measure vertex angles, respectively that of an equilateral triangle with side length $2r$, 
	at finite vertices of a horocyclic ideal triangle with compact side length $2r$, 
	and of a square with side length $2r$.

For a complete, finite-area, $n$-cusped hyperbolic surface $F$ with $b$ geodesic boundary components and Euler characteristic $\chi$, and $p\in F$, $\mathit{injrad}_p(F) = r_{\chi,n,b}$ if and only if $p$ is the unique internal vertex of a decomposition of $F$ into equilateral triangles with side length $2r_{\chi, n, b}$, $n$ horocyclic ideal triangles with compact side length $2r_{\chi, n, b}$, and $b$ Saccheri quadrilaterals with leg length $r_{\chi, n, b}$ and summit length $2r_{\chi, n, b}$, all with disjointly embedded interiors, such that $\partial F$ is the union of the quadrilaterals' bases. There does exist such a pair $(F,p)$ with $\mathit{injrad}_p(F) = r_{\chi,n,b}$.
\end{thm}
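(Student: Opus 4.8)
The plan is to derive the bound and the equality characterization directly from Proposition \ref{maximal rad} with $k=1$, noting that the equation defining $r_{\chi,n,b}$ is precisely the $k=1$ case of the one defining $r_{\chi,n,b}^k$, so that $r_{\chi,n,b}=r_{\chi,n,b}^1$. The only point requiring care is the passage between injectivity radius and packing radius. First I would record that for a complete, finite-area $F$ with compact geodesic boundary, $\mathit{injrad}_p(F)=\min\{\mathit{dist}(p,\partial F),\tfrac12\mathit{sys}_p(F)\}$, and that the \emph{open} metric disk of radius $\rho:=\mathit{injrad}_p(F)$ still embeds locally isometrically with center $p$: any obstruction to enlarging it is either contact with $\partial F$ or a self-tangency along a shortest geodesic loop, and neither occurs strictly inside the open $\rho$-disk. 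Hence an embedded disk of radius $r$ centered at $p$ exists exactly when $r\le\rho$, each such disk being a one-disk packing of $F$; since Proposition \ref{maximal rad} bounds the radius of any one-disk packing by $r_{\chi,n,b}^1=r_{\chi,n,b}$, we obtain $\rho\le r_{\chi,n,b}$.

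For the equality statement I would feed both implications through the $k=1$ case of Proposition \ref{maximal rad}. If $\mathit{injrad}_p(F)=r_{\chi,n,b}$ then the open disk of that radius is an embedded one-disk packing attaining the bound, so the ``only if'' clause of the Proposition applies and $F$ decomposes into the stated pieces with a single internal vertex; as the disk is centered at $p$, that vertex is $p$. Conversely, given such a decomposition with unique internal vertex $p$, the packing half of the Proposition produces an embedded disk of radius $r_{\chi,n,b}$ at $p$, so $\mathit{injrad}_p(F)\ge r_{\chi,n,b}$, and the upper bound forces equality. No new argument beyond this bookkeeping is needed for these parts.

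The substantive remaining task is existence: producing one pair $(F,p)$ that realizes equality. I would assemble $F$ from the rigid pieces determined by $r:=r_{\chi,n,b}$, namely the $n$ horocyclic ideal triangles, the $b$ Saccheri quadrilaterals, and $t:=2-(2\chi+b+n)$ equilateral triangles, where the count $t$ is forced by matching the angle sum at $p$ against the defining equation and is positive because $\chi<0$. First I would fold each Saccheri quadrilateral into a one-holed monogon by isometrically identifying its two legs, which turns its base into a geodesic boundary circle and leaves its summit as a free geodesic loop of length $2r$; likewise I would identify the two infinite sides of each horocyclic ideal triangle to cap off a cusp, leaving its compact side as a free loop of length $2r$. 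It then remains to pair up the $3t+n+b$ free geodesic edges, all of length $2r$. The identity $3t\,\alpha(r)+2n\,\beta(r)+2b\,\gamma(r)=2\pi$ ensures that once all finite interior vertices are identified to a single point $p$, the total angle there is $2\pi$ and $p$ is a smooth interior point, while the right angles along the Saccheri bases make $\partial F$ a smooth geodesic; a direct $v-e+f$ count (with cusps filled) then returns $\chi(F)=\chi$, with exactly $n$ cusps and $b$ boundary components.

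The hard part will be choosing the edge pairing so that the quotient is connected with a \emph{single} interior vertex, for every admissible $(\chi,n,b)$. I would handle this through an explicit model. When $n=b=0$ the requirement is exactly a one-vertex triangulation of the closed surface of Euler characteristic $\chi$ by $2-2\chi$ equilateral triangles, which exists for every such $\chi$; the free loops contributed by the cusp and boundary pieces can be spliced into such a one-vertex scheme without splitting the vertex class, and the angle normalization is automatic since all corners have the prescribed measures. I would then verify that the identification space is a connected hyperbolic surface of the prescribed topology with unique interior vertex $p$, so that the equality characterization already proved gives $\mathit{injrad}_p(F)=r_{\chi,n,b}$, completing the existence claim.
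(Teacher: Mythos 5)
Your first two paragraphs coincide with the paper's own argument: the upper bound and the equality characterization are exactly the $k=1$ case of Proposition \ref{maximal rad}, and your observation that the open disk of radius $\mathit{injrad}_p(F)$ still embeds (so that injectivity radius equals one-disk packing radius) is correct bookkeeping that the paper leaves implicit. The genuine gap is in the existence construction, at precisely the step you defer: producing, for every admissible $(\chi,n,b)$, an edge pairing of the $t=2-(2\chi+n+b)$ equilateral triangles, $n$ capped horocyclic ideal triangles, and $b$ folded Saccheri quadrilaterals whose quotient is connected with a \emph{single} interior vertex. Your proposed mechanism --- start from a one-vertex triangulation of the closed surface of Euler characteristic $\chi$ by $2-2\chi$ triangles and ``splice in'' the $n+b$ loops ``without splitting the vertex class'' --- fails as stated, on two counts. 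First, the counts are inconsistent: if all $2-2\chi$ triangles are kept, the cone angle at the would-be single vertex is $3(2-2\chi)\alpha(r)+2n\beta(r)+2b\gamma(r)>2\pi$, since the defining equation allots only $3t$ triangle corners; equivalently, an Euler characteristic computation shows the resulting one-vertex complex would have characteristic $\chi-(n+b)/2$, not $\chi$. So any correct splice must also delete (or rather, the final scheme must contain $n+b$ fewer) triangles, which your sketch never arranges. Second, the natural splice --- un-glue an interior edge of the triangulation and glue the two freed triangle sides to two of the loops --- necessarily splits the vertex class: the un-glued edge is a loop based at the vertex, so cutting it severs the link circle at two points, and the quotient has \emph{two} interior vertices, each with angle sum less than $2\pi$, so it is neither smooth nor of the form required by Proposition \ref{maximal rad}.

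A workable repair inserts one extra triangle along with each loop (two of its sides glued into the opened edge, the third to the loop); one can check this keeps the vertex class single and changes $\chi$ by $-1$ per insertion, as needed. But making that precise --- verifying the link of the vertex stays a circle, and handling connectivity and parity uniformly in $(\chi,n,b)$ --- is exactly the content that the paper supplies by a different, explicit route: it fans the $t=4g+n+b-2$ triangles around a single vertex $v$ into a $(4g+n+b)$-gon, performs the standard genus side pairings on $4g$ sides, attaches the capped horocyclic triangles and folded Saccheri quadrilaterals along the remaining $n+b$ sides, tracks explicitly that every polygon vertex is identified to $v$, and then invokes Poincar\'e's polygon theorem, which simultaneously certifies that the quotient is a complete hyperbolic surface (completeness at the cusps comes from the parabolic identifications and would also need an argument in your gluing scheme). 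A minor further slip: positivity of $t$ does not follow from $\chi<0$ alone (e.g.\ $\chi=-2$, $n+b=6$ gives $t=0$); it follows from realizability of $(\chi,n,b)$ by an actual surface, which forces $2-\chi-n-b\ge 0$ and hence $t\ge -\chi>0$. In summary, your reduction to Proposition \ref{maximal rad} is exactly the paper's, but the combinatorial claim carrying your existence proof is unsupported and, in the form stated, false.
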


\begin{proof} The $k=1$ case of Proposition \ref{maximal rad} gives the Theorem's upper bound and characterizes the surfaces that attain it. To prove the Theorem in full, it remains to give examples of such maximizing surfaces. We do this below.

Fix $\chi < 0, \ n \geq 1, \ b \geq 1$ such that $g=\frac12 (2-\chi-n-b)$ is a non-negative integer. Let $r_{\chi, n, b}$ be as defined in Corollary \ref{maximal injrad}.
Take $4g+n+b-2$ equilateral triangles each with side length $2r_{\chi, n, b}$ and arrange them fan-like with common vertex $v$ to form a triangulated $(4g+n+b)$-gon, $P_0$ in $\mathbb{H}^2$.
Label the edges cyclically $a_1,\ b_1,\ c_1,\ d_1, \ldots, a_g,\ b_g,\ c_g,\ d_g, e_1, \ldots, e_n, \ f_1, \ldots , f_b$ and give each edge the counter-clockwise orientation. See Figure \ref{fig:polygon1}.

\begin{figure}
    \centering
    \begin{subfigure}[b]{0.3\textwidth}
        \includegraphics[scale=1.2]{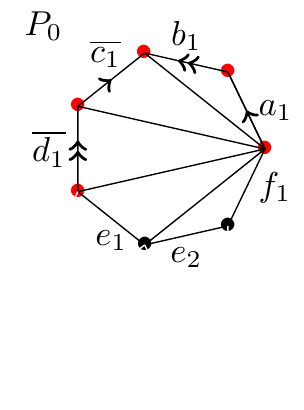}
        \caption{Step 1: genus}
        \label{fig:polygon1}
    \end{subfigure}
    ~ 
    \begin{subfigure}[b]{0.3\textwidth}
        \includegraphics[scale=1.2]{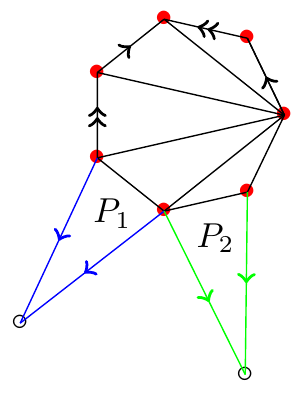}
        \caption{Step 2: cusps}
        \label{fig:polygon2}
    \end{subfigure}
       \begin{subfigure}[b]{0.3\textwidth}
        \includegraphics[scale=1.2]{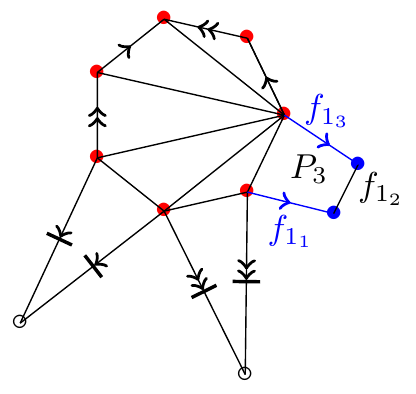}
        \caption{Step 3: boundary}
        \label{fig:polygon3}
    \end{subfigure}
    \caption{Construction of a surface with $\chi = -3, \ n=2, \ b=1$}
    \label{polygons}
\end{figure}

For each $1 \leq i \leq g$, let $A_i$ be the orientation-preserving isometry paring the edge $a_i$ with $\overline{c_i}$ (the segment $c_i$ with clock-wise orientation) and similarly define $B_i$ as the orientation-preserving isometry paring $b_i$ with $\overline{d_i}$. See Figure \ref{fig:polygon2}.
Notice that the result has made the first $4g+1$ vertices equivalent to $v$ under the quotient. 

Next, for $1\leq j \leq n$ we attach to each $e_j$ a horocyclic ideal triangle $P_j$ with base length $2r_{\chi, n, b}$ and let $E_j$ be the parabolic isometry fixing the respective ideal vertex and taking one of the infinite sides of the triangle to the other. See Figure \ref{fig:polygon3}.
Thus we have added the next $2n$ vertices as equivalent to $v$ under the quotient.

Finally for $1 \leq k \leq b$ we attach to each $f_k$ a Saccheri quadrilateral $P_{n+k}$ with summit length $2r_{\chi, n, b}$ and leg length $r_{\chi, n, b}$, assigning counter-clockwise orientation to the additional three sides, $f_{k_1}, f_{k_2}, $ and $f_{k_3}$.  Denote by $F_k$ the orientation-preserving isometry which exchanges opposite sides $f_{k_1}$ and $\overline{f_{k_3}}$, thus making the final $2b-2$ vertices of $P_0$ also equivalent to $v$ under quotient.

By the definition of $r$ taken from the Corollary \ref{maximal injrad}, the sum of the angles about vertex $v$ is
	$$ (4g+n+b-2) \cdot 3\alpha(r_{\chi, n, b}) + n \cdot 2\beta(r_{\chi, n, b}) + b \cdot 2\gamma(r_{\chi, n, b}) = 2\pi.$$

Thus Poincare's Polygon Theorem tells us $G = \left< A_1,\ B_1, \ldots, A_g, \ B_g, \ E_1, \ldots, E_n, F_1, \ldots, F_b \right>$ is a Fuchsian group with the polygon $P:= \bigcup_{i=0}^{n+b} P_i$ its fundamental domain.  Hence the quotient $F = \mathbb{H}^2 / G$ is a complete hyperbolic surface. 
Inspecting the edge pairing, we see that $F$ has $n$ cusps and $b$ boundary components. Thus by the Gauss-Bonnet theorem it also has genus $g$, as desired. By Proposition \ref{maximal rad}, $F$ has injectivity radius $r_{\chi,n,b}$ at the quotient of $v$.\end{proof}

\begin{remark}\label{boundary length} By the trigonometry of Saccheri quadrilaterals (cf.~Lemma \ref{loop length}), each boundary component of the maximizing surface $F$ has length $2\sinh^{-1}{\left(\tanh{(r_{\chi,n,b})}\right)}$.\end{remark}

\section{The systole of loops}\label{sysloops}

Now we turn our attention to the systole of loops function.  Recall from the introduction that this is defined at a point $p$ of a hyperbolic surface $F$ (possibly with geodesic boundary) as the infimum $\mathit{sys}_p(F)$ of the lengths of closed, non-constant geodesic arcs in $F$ based at $p$.  The main result of this section implies that unlike $\mathit{injrad}_p(F)$, $\mathit{sys}_p(F)$ is not bounded above over all bounded surfaces $F$ with a fixed topology and all $p\in F$:

\begin{prop}\label{up up up}  For a complete, connected hyperbolic surface $F$ with compact geodesic boundary and Euler characteristic $\chi$, and a component $\gamma$ of $\partial F$ with length $\ell$, there is a point $p\in\gamma$ such that $\mathit{sys}_p(F) \geq \frac{\ell}{-6\chi}$.\end{prop}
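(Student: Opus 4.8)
The plan is to argue by contradiction, working in the universal cover, and to show that the hypothesis $\mathit{sys}_p(F) < L$ at every $p \in \gamma$ (with $L := \frac{\ell}{-6\chi}$) produces more short loops than the topology of $F$ permits.

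\emph{Reformulation.} I would fix a locally isometric universal cover $\pi \co \mathbb{H}^2 \to F$ and a lift $\tilde\gamma$ of $\gamma$, and let $g \in \pi_1 F$ generate $\mathrm{Stab}(\tilde\gamma)$, a translation of length $\ell$ along $\tilde\gamma$. For $p\in\gamma$ with lift $\tilde p\in\tilde\gamma$, every geodesic loop based at $p$ lifts to the segment from $\tilde p$ to $h\tilde p$ for some $h\in\pi_1 F\setminus\{1\}$, and since the universal cover is convex with $\tilde p$ on its boundary that segment projects back into $F$; hence
\[ \mathit{sys}_p(F) = \min_{h\in\pi_1 F\setminus\{1\}} d(\tilde p, h\tilde p). \]
For $h$ hyperbolic with axis $A_h$ and translation length $\lambda_h$ the displacement obeys $\sinh\!\big(\tfrac12 d(x,hx)\big) = \cosh\!\big(d(x,A_h)\big)\sinh\!\big(\tfrac12\lambda_h\big)$, so $d(\tilde p,h\tilde p)\ge\lambda_h$ and, because $x\mapsto d(x,A_h)$ is convex on $\tilde\gamma$, the sublevel set $B_h := \{\tilde p\in\tilde\gamma : d(\tilde p,h\tilde p)<L\}$ is either empty (forced if $\lambda_h\ge L$) or a single interval on which $\tilde\gamma$ runs close to $A_h$. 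Parabolic $h$, coming from cusps, are handled the same way, their displacement sublevel sets again being intervals. The powers of $g$ have $\lambda_{g^k}=|k|\ell\ge\ell>L$, so they never contribute.

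\emph{Reduction.} Let $I\subset\tilde\gamma$ be a fundamental segment for $\langle g\rangle$, of length $\ell$, with image $\gamma$. Since $\tilde p\mapsto\min_h d(\tilde p,h\tilde p)$ is $\langle g\rangle$-invariant, it suffices to find $\tilde p\in I$ lying outside every $B_h$; then $d(\tilde p,h\tilde p)\ge L$ for all $h\neq 1$, so $p=\pi(\tilde p)$ satisfies $\mathit{sys}_p(F)\ge L$, as desired. Thus I must show that the bad set $\bar B := I\cap\bigcup_{h}B_h$ has length strictly less than $\ell$.

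\emph{Counting, and the main obstacle.} The intended mechanism for the constant is that the classes $h$ contributing a nonempty $B_h$ give essential geodesic loops meeting $\gamma$, from which (after replacing each by a shortest representative) one extracts a collection of pairwise disjoint, pairwise non-isotopic essential simple arcs with endpoints on $\gamma$. A surface of Euler characteristic $\chi$ carries at most $-3\chi$ such arcs, whose $\le -6\chi$ endpoints cut $\gamma$ into at most $-6\chi$ subarcs; the longest has length at least $\ell/(-6\chi)=L$, and its midpoint should be the sought point. Making this precise is where the difficulty lies, for two reasons. First, for large $\ell$ the threshold $L$ is large, so there can be \emph{many} classes $h$ with $\lambda_h<L$ whose axes pass near $\tilde\gamma$; a naive count of contributing intervals is unbounded, so one must instead exploit that these intervals overlap heavily — many translates crowd $\tilde\gamma$ precisely because a fixed-area surface is forced to be thin when $\ell$ is large — and reduce to a genuinely disjoint, hence topologically bounded, subfamily. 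Second, one must verify that shortest loops are simple, that the extracted arcs can be taken disjoint and non-isotopic, and then relate a subarc's length to the systole at its midpoint; I expect the displacement identity above to supply this quantitative step once the combinatorial reduction to $\le -3\chi$ disjoint arcs is in place. If that route proves awkward, the fallback is a direct global estimate of $\mathrm{length}(\bar B)$ via a Basmajian-type accounting of boundary shadows, comparing each $|B_h|$ with the shadow of the corresponding orthogeodesic.
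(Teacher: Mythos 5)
Your proposal is a plan, not a proof, and the two difficulties you flag yourself are precisely where it breaks down; neither is resolved. First, the reduction from the (genuinely unbounded) family of classes $h$ with nonempty bad interval $B_h$ to a pairwise disjoint, pairwise non-isotopic family of at most $-3\chi$ simple arcs is the entire content of the problem, and no mechanism is given for it. The short loops at different points of $\gamma$ lie in infinitely many distinct free homotopy classes when $\ell$ (hence $L$) is large, they need not be disjoint from one another, and "these intervals overlap heavily" is an expectation, not an argument. Second, even granting that reduction, the concluding pigeonhole step is invalid as stated: if the endpoints of the chosen arcs cut $\gamma$ into at most $-6\chi$ subarcs and $p$ is the midpoint of a subarc of length at least $L$, nothing forces $\mathit{sys}_p(F)\ge L$. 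Being far \emph{along $\gamma$} from the arcs' endpoints controls neither the distance in $F$ from $p$ to the arcs themselves (an essential arc can return arbitrarily close to $\gamma$ far from its endpoints), nor the length of a systolic loop at $p$, which may lie in a class having nothing to do with the finitely many arcs you retained. The displacement identity cannot rescue this, because you have no control on where the axis of the relevant $h$ sits relative to $p$.

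What is missing is a decomposition in which the combinatorial count and the geometric length bound are coupled, and that is exactly what the paper's proof supplies. It takes the cut locus $\Sigma$ of $\gamma$ (points with more than one shortest arc to $\gamma$), which by Proposition \ref{quad decomp} decomposes $F$ into Saccheri quadrilaterals whose bases exactly tile $\gamma$ and whose legs meet $\gamma$ perpendicularly; an Euler characteristic count (using that interior cut-locus vertices have valence at least three) bounds the number of quadrilaterals by $-6\chi$, so some base has length at least $\frac{\ell}{-6\chi}$. The crucial point your approach cannot reproduce then comes for free from the structure of the pieces: each quadrilateral $Q$ (or $Q\cup\overline{Q}$, doubled across a summit in $\Sigma$) is simply connected with frontier contained in its legs, and since nearest-point projection to the base is distance-reducing, the midpoint of the base is at distance at least $\frac{\ell}{-12\chi}$ from those legs. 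Hence every essential loop at the midpoint must travel out and back through the legs and has length at least $\frac{\ell}{-6\chi}$. If you want to salvage your universal-cover formulation, you would in effect have to rebuild this cut-locus structure (the equidistant loci between lifts of $\gamma$ are exactly where your intervals $B_h$ change ownership), at which point you have reproduced the paper's argument.
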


\noindent The proof is simple-minded and direct. We will give it first, then justify the assertions therein.

\begin{proof} For $F$ and $\gamma$ as above, define the \textit{cut locus} $\Sigma$ of $\gamma$ as the set of points of $F$ that have more than one shortest arc to $\gamma$. By Proposition \ref{quad decomp} below, $\Sigma$ determines a decomposition of $F$ as a finite union of Saccheri quadrilaterals. Each of these has its base in $\gamma$ and its summit in $\Sigma\cup(\partial F - \gamma)$.  If $F$ has cusps then some vertices of these quadrilaterals are ideal, as each cusp of $F$ has some edges of $\Sigma$ running out of it.  For ease of bookkeeping we will compactify $F$ by adding a point at the end of each cusp, producing a surface $\bar{F}$ with boundary that has Euler characteristic $\chi+n$ (if $F$ has $n$ cusps) and a decomposition into compact quadrilaterals.

We first observe that the number of these quadrilaterals is universally bounded above by $-6\chi$. This will follow from the Euler characteristic formula $\chi + n = v-e+f$.  We divvy up the number $v$ of vertices and $e$ of edges as follows:\begin{align*}
	& v = v_b+v_c+ n && e = e_b+e_p + e_c. \end{align*}
Here $v_b$ and $e_b$ are the number of \textit{boundary} vertices and edges, respectively --- those contained in $\partial F$; $v_c$ is the number of vertices in the interior of $F$, each belonging to the cut locus, and $e_c$ is the number of cut locus edges; $n$ is the number of additional vertices at the ends of cusps; and $e_p$ is the number of edges that have one endpoint in $\gamma$, the legs of the Saccheri quadrilaterals. Since $\partial F$ is compact it is a disjoint union of circles, so $v_b = e_b$. Since each Saccheri quadrilateral has two legs, and each leg is in two quadrilaterals, we have $e_p = f$.  Plugging this into the Euler characteristic formula and simplifying yields $\chi = v_c -e_c$.

Each interior cut locus vertex is contained in at least three edges, whereas each cut locus edge has at most two endpoints at such vertices, so $3v_c\leq 2e_c$, yielding $\chi \leq-e_c/3$.  But each cut locus edge is contained in two Saccheri quadrilaterals, whereas each quadrilateral has at most one such edge in its boundary, giving $e_c\leq f/2$ and hence $\chi\leq -f/6$ as claimed.

The second observation is that any homotopically nontrivial loop based in a quadrilateral $Q$ must exit and re-enter that quadrilateral through its legs, if its summit lies in $\partial F - \gamma$; or if the summit lies in $\Sigma$ then it must exit $Q\cup\overline{Q}$, for $\overline{Q}$ as in Lemma \ref{quads}, through the union of their legs.  This is because these regions are simply connected and their frontiers in $F$ are unions of legs (since each quadrilateral has its base in $\gamma$).

If $\ell$ is the length of $\gamma$ then the first observation implies that there exists a quadrilateral $Q$ with base length at least $\frac{\ell}{-6\chi}$, since each quadrilateral has its base (and no other edges) in $\gamma$.  A basepoint $x$ located at the midpoint of the base of this quadrilateral has distance at least $\frac{\ell}{-12\chi}$ to the frontier of $Q$ or $Q\cup\overline{Q}$, in the respective cases above. If $Q$ has its summit in $\partial F$ this is clear, since the nearest-point projection from $Q$ to its base is well known to be distance-reducing.  In the other case we note that any arc in $Q\cup\overline{Q}$ can be replaced by one in $Q$ with the same length by replacing each interval of its intersection with the interior of $\overline{Q}$ by the image of this interval under the reflective involution that exchanges $\overline{Q}$ and $Q$. This implies in particular that the distance from $x$ to either leg of $\overline{Q}$ is at least $\frac{\ell}{-12\chi}$.  The second observation now implies the result.\end{proof}

In the rest of this section we will prove the assertions about the cut locus used in the proof of Proposition \ref{up up up}.  The cut locus is ``la ligne de partage'' in Bavard's paper \cite[\S 1]{Bavard_anneaux} pointed out to us by the referee. Our account here largely parallels those in the work of Bowditch--Epstein \cite{BowEp} and Kojima \cite{Kojima}, in different but closely related contexts. To begin, cut loci are locally finite cell complexes that have codimension one in their ambient manifolds and cells that are polygons.

\begin{lemma}\label{cut locus} For a complete, connected, finite-area hyperbolic surface $F$ with geodesic boundary and a compact component $\gamma$ of $\partial F$, define the \mbox{\rm cut locus} $\Sigma$ of $\gamma$ to be the set of points with more than one shortest arc to $\gamma$.  Taking the vertex set $\mathcal{V}$ of $\Sigma$ to be the union of $\Sigma\cap(\partial F - \gamma)$ (the \mbox{\rm boundary vertices}) with the set of points of $\mathit{int}\,F$ that have at least three shortest arcs to $\gamma$ (\mbox{\rm interior vertices}), and an \mbox{\rm edge} to be the closure of a connected component of $\Sigma-\mathcal{V}$, $\Sigma$ is a locally finite graph with geodesic edges. Each interior vertex has valence at least three.\end{lemma}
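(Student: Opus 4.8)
The plan is to pass to the universal cover and identify $\Sigma$ as the projection of the one-skeleton of a Voronoi-type decomposition whose ``sites'' are the lifts of $\gamma$. Fix a developing map identifying the universal cover $\widetilde{F}$ of $F$ with a convex intersection of half-planes in $\mathbb{H}^2$, as in the definition of completeness above, with covering projection $\pi\co\widetilde{F}\to F$, and let $\Gamma$ be the collection of those boundary geodesics of $\widetilde{F}$ that lie over $\gamma$. Since $\widetilde{F}$ is convex and each $g\in\Gamma$ lies in $\partial\widetilde{F}$, the foot of the perpendicular from any $x\in\widetilde{F}$ to $g$, together with the segment joining them, stays in $\widetilde{F}$; it follows that $d_F(\pi(x),\gamma)=\mathrm{dist}(x,\bigcup\Gamma)$ and that the shortest arcs from $\pi(x)$ to $\gamma$ are exactly the projections of the perpendiculars from $x$ to the nearest members of $\Gamma$. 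Hence $\Sigma=\pi(\widetilde{\Sigma})$, where $\widetilde{\Sigma}\subset\widetilde{F}$ is the set of points having at least two nearest geodesics in $\Gamma$, and the number of shortest arcs from $\pi(x)$ to $\gamma$ equals the number of members of $\Gamma$ nearest to $x$. Because $\gamma$ is a compact geodesic and $\pi_1 F$ acts properly discontinuously, the members of $\Gamma$ are pairwise disjoint---indeed pairwise ultraparallel, since distinct lifts of $\gamma$ cannot share an ideal endpoint in a discrete group---and they form a locally finite family.

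The key geometric input is that the bisector of two disjoint geodesics is itself a geodesic. For $g,g'\in\Gamma$ with common perpendicular $\delta$, the reflection exchanging $g$ and $g'$ fixes a geodesic $\ell_{g,g'}$ meeting $\delta$ at the midpoint of the perpendicular segment, and a short computation---for instance in band coordinates $(u,\theta)$ for which the metric is $(du^2+d\theta^2)/\sin^2\theta$ and both geodesics are vertical lines---shows that $\{x:\mathrm{dist}(x,g)=\mathrm{dist}(x,g')\}$ is exactly $\ell_{g,g'}$ and that $\{x:\mathrm{dist}(x,g)\le\mathrm{dist}(x,g')\}$ is the half-plane it bounds. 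Consequently each Voronoi region $V_g=\{x\in\widetilde{F}:\mathrm{dist}(x,g)\le\mathrm{dist}(x,g')\ \forall\,g'\in\Gamma\}$ is convex, being an intersection of half-planes with $\widetilde{F}$, and $\widetilde{\Sigma}$ is the union of their relative boundaries. Local finiteness then follows by a compactness argument: on a compact $K\subset\widetilde{F}$ the function $\mathrm{dist}(\cdot,\bigcup\Gamma)$ is bounded by some $R$, so only those finitely many $g\in\Gamma$ meeting the compact $R$-neighborhood of $K$ can be nearest to a point of $K$; thus $\widetilde{\Sigma}\cap K$ is the one-skeleton, intersected with $K$, of the Voronoi decomposition of finitely many geodesic sites, a finite geodesic graph. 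Projecting, $\Sigma$ is a locally finite graph whose edges---the arcs on which exactly two members of $\Gamma$ tie for nearest---lie on bisector geodesics, and whose interior vertices are the points where at least three members of $\Gamma$ are equidistant and nearest, matching the definitions in the statement.

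Finally I would establish the valence bound by a first-order analysis at an interior vertex $v$ with nearest geodesics $g_1,\dots,g_k$ (so $k\ge 3$), all at distance $J=\mathrm{dist}(v,\bigcup\Gamma)>0$. Their feet of perpendicular $q_1,\dots,q_k$ determine $k$ distinct directions $\widehat{q_i}$ in the unit tangent circle at $v$; these are distinct because two distinct geodesics cannot share a foot of perpendicular from $v$. For a unit direction $w$ and small $t>0$ one has $\mathrm{dist}(v+tw,g_i)=J-t\cos\angle(w,\widehat{q_i})+O(t^2)$, so to first order the nearest $g_i$ along $w$ is the one whose direction $\widehat{q_i}$ is angularly closest to $w$, and the germ of $\Sigma$ at $v$ is the angular bisector set of the $k$ points $\widehat{q_i}$ on the circle. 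Since $k$ distinct points cut the circle into $k$ arcs, there are exactly $k$ such bisecting directions, each the tangent direction of a genuine edge of $\Sigma$---along it the two cyclically adjacent geodesics tie for nearest while the others are strictly farther. Hence $v$ has valence exactly $k\ge 3$.

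The main obstacle is making the local structure rigorous: confirming that the equidistant locus of two geodesics is a single geodesic bounding honest half-planes, and that near an interior vertex the first-order picture is faithful, so that the $k$ bisecting directions really are realized by $k$ genuine edges with no spurious ones appearing. Once the bisectors are understood, convexity of the $V_g$, local finiteness, and the valence count all follow cleanly; the remaining care is bookkeeping at boundary vertices, where an edge of $\widetilde{\Sigma}$ meets a boundary geodesic of $\widetilde{F}$ lying over $\partial F-\gamma$, and near cusps, where edges exit to infinity yet every compact subset of $F$ still meets only finitely many cells.
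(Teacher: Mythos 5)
Your proposal is correct and takes essentially the same approach as the paper's proof: lift to the universal cover, identify the preimage of $\Sigma$ with the singular set of the nearest-lift (Voronoi) decomposition whose sites are the lifts of $\gamma$, invoke the fact that the equidistant locus of two disjoint geodesics in $\mathbb{H}^2$ is a geodesic, and obtain local finiteness and the valence bound from the local finiteness of the family of lifts and the cyclic arrangement of the nearest lifts around a vertex. The only cosmetic differences are that you rule out lifts sharing an ideal endpoint via discreteness (a case the paper treats directly, though it is vacuous for compact $\gamma$), and you support the valence count with a first-order angular analysis where the paper simply asserts that each adjacent bisector contributes a segment.
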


\begin{proof}  Let $p\co\widetilde{F}\to F$ be the universal cover. We may take $\widetilde{F}$ to be a convex subset of $\mathbb{H}^2$ bounded by a locally finite disjoint union of geodesics. For any two of these boundary geodesics $\tilde\gamma_1$ and $\tilde\gamma_2$, each projecting to $\gamma$, the locus of points equidistant from $\tilde\gamma_1$ and $\tilde\gamma_2$ is itself a geodesic. This is in fact a well known property of any two disjoint geodesics in $\mathbb{H}^2$.  If $\tilde\gamma_1$ and $\tilde\gamma_2$ do not share an endpoint at infinity then their equidistant is the perpendicular bisector of the unique shortest geodesic arc joining them; if they do share an ideal endpoint $v$ then it bisects every arc of a horocycle based at $v$ that joins $\tilde\gamma_1$ to $\tilde\gamma_2$.

We claim that the preimage $\widetilde{\Sigma}$ of $\Sigma$ lies in the union of such equidistant geodesics; that any point contained in a unique one has a neighborhood in $\widetilde{\Sigma}$ consisting of points with the same property; and that the set of points contained in more than one is discrete.

For any $x\in\Sigma$ and any $\tilde{x}\in p^{-1}(x)$, each shortest geodesic arc from $x$ to $\gamma$ lifts to a shortest geodesic arc in $\widetilde{F}$ from $\tilde{x}$ to a component $\tilde\gamma$ of $\partial\widetilde{F}$ projecting to $\gamma$.  The lifts of distinct such arcs terminate at distinct components of $\partial\widetilde{F}$, since there is only one shortest arc from $\tilde{x}$ to any geodesic that does not contain it.  Therefore since $x$ has at least two distinct shortest arcs to $\gamma$, $\tilde{x}$ is contained in the equidistant locus $\lambda_{12}$ to at least two components $\tilde\gamma_1$ and $\tilde\gamma_2$ of $p^{-1}(\gamma)$.

If $x$ has only two shortest arcs to $\gamma$, and these have length $\ell$, then since the collection of boundary geodesics of $\widetilde{F}$ is locally finite there exists $\epsilon>0$ such that every other component $\tilde\gamma$ of $p^{-1}(\gamma)$ has distance at least $\ell+\epsilon$ from $\tilde{x}$.  Thus every point in the $\epsilon$-neighborhood $U_{\epsilon}$ of $\tilde{x}$ is closer to $\gamma_1$ and/or $\gamma_2$ than to any other lift of $\gamma$, and $U_{\epsilon}\cap\widetilde{\Sigma} = U_{\epsilon}\cap\lambda_{12}$.

If on the other hand $\tilde{x}$ is closest to lifts $\tilde\gamma_1,\tilde\gamma_2,\hdots,\tilde\gamma_k$ for some $k>2$, that is, if $x$ has more than two shortest arcs to $\gamma$, then $\tilde{x}$ lies in the intersection of the equidistants $\lambda_{ij}$ for $1\leq i < j\leq k$.  But any two of these intersect only at $\tilde{x}$, since \textit{any} two geodesics in $\mathbb{H}^2$ intersect at most once, and as above by local finiteness there exists $\epsilon>0$ such that 
\[ U_{\epsilon}\cap\widetilde{\Sigma} \subset U_{\epsilon}\cap \bigcup\{\lambda_{ij}\,|\,1\leq i<j\leq k\}. \]
This establishes the claim.  Taking the edges of $\Sigma$ to be projections of segments of the form $\lambda_{ij}\cap\widetilde{\Sigma}$ we also find that $\Sigma$ is a locally finite graph, since for every point $x$ the neighborhood $U_{\epsilon}$ of $\tilde{x}$ intersects at most one vertex and at most finitely many edges.  To see that each vertex $x$ has valence at least three, we  observe that if the lifts $\tilde\gamma_1,\hdots,\tilde\gamma_k$ of $\gamma$ nearest to $\tilde{x}$ are enumerated in cyclic order then $U_{\epsilon}\cap\Sigma$ contains a segment of $\lambda_{i\,i+1}$ for each $i<k$, and of $\lambda_{k1}$.\end{proof}

\begin{lemma}\label{quads} For a complete, connected hyperbolic surface $F$ with compact geodesic boundary, a component $\gamma$ of $\partial F$, and an edge $e$ of the cut locus $\Sigma$ of $\gamma$, the set
\[ \bigcup_{x\in\mathit{int}\,e} \{\lambda\,|\,\lambda\ \mbox{is a shortest arc from $x$ to}\ \gamma\} \]
is the embedded image of $\mathit{int}(Q\cup\overline{Q})$ under the universal cover $\pi\co\widetilde{F}\to F$, where $Q$ and $\overline{Q}$ are Saccheri quadrilaterals in $\widetilde{F}$ with their bases in $p^{-1}(\gamma)\subset\partial\widetilde{F}$ and with $Q\cap\overline{Q}=\tilde{e}$ the summit of each, for some lift $\tilde{e}$ of $e$ to $\widetilde{F}$.  

The hyperbolic reflection in the geodesic containing $Q\cap\overline{Q}$ exchanges $Q$ with $\overline{Q}$. Furthermore, if $Q'\cap\overline{Q}'$ corresponds to a distinct edge $f$ of $\Sigma$ then 
\[ \pi(\mathit{int}(Q\cup\overline{Q}))\cap \pi(\mathit{int}(Q'\cup\overline{Q}')) = \emptyset. \]
\end{lemma}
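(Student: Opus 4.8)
The plan is to pass to the universal cover $\pi\co\widetilde F\to F$, realized as a convex subset of $\mathbb{H}^2$ as in Lemma \ref{cut locus}, and analyze a single lift $\tilde e$ of $e$. By the proof of that lemma, $\widetilde\Sigma = \pi^{-1}(\Sigma)$ lies in the union of the equidistant geodesics of the components of $\pi^{-1}(\gamma)$, and since interior points of $e$ have exactly two shortest arcs to $\gamma$, the lift $\tilde e$ is a segment of one such equidistant $\lambda_{12}$, determined by the pair of components $\tilde\gamma_1,\tilde\gamma_2$ of $\pi^{-1}(\gamma)$ nearest to $\mathit{int}\,\tilde e$. (When $e$ runs out a cusp the two nearest components share an ideal point, but $\lambda_{12}$ is still a geodesic, and the analysis below is unchanged.)

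First I would construct $Q$ and $\overline Q$ by sweeping. For $\tilde x\in\mathit{int}\,\tilde e$ the two shortest arcs to $\partial\widetilde F$ are the perpendicular geodesic segments from $\tilde x$ to $\tilde\gamma_1$ and to $\tilde\gamma_2$; let $Q$ and $\overline Q$ be their respective unions over all such $\tilde x$. Working in Fermi coordinates based on $\tilde\gamma_1$, distinct perpendiculars to $\tilde\gamma_1$ are disjoint, and the nearest-point projection $\mathit{int}\,\tilde e\to\tilde\gamma_1$ is injective because $\tilde e$ meets each perpendicular at most once; hence these arcs foliate an embedded region bounded by the summit $\tilde e$, a base in $\tilde\gamma_1$ met at right angles by the legs, and the two extreme perpendiculars as legs. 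This exhibits $Q$ as the Saccheri quadrilateral of the statement with base in $\tilde\gamma_1\subset\pi^{-1}(\gamma)$, and $\overline Q$ likewise with base in $\tilde\gamma_2$; by construction $Q\cap\overline Q=\tilde e$ is their common summit. The reflection assertion is then immediate: the hyperbolic reflection $\rho$ in $\lambda_{12}$ is precisely the isometry exchanging $\tilde\gamma_1$ and $\tilde\gamma_2$ that defines their equidistant, it fixes $\tilde e$ pointwise, and so carries the $\tilde\gamma_1$-perpendicular through $\tilde x$ to the $\tilde\gamma_2$-perpendicular through $\tilde x$; thus $\rho(Q)=\overline Q$.

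Next I would show that $\pi$ embeds $\mathit{int}(Q\cup\overline Q)$ onto the asserted union of shortest arcs. That the image equals this union is built into the construction, so the content is injectivity. Suppose $\pi(\tilde y_1)=\pi(\tilde y_2)$ with $\tilde y_i$ in the interior of a shortest arc from some $\tilde x_i\in\mathit{int}\,\tilde e$. The key point is that such an interior arc point lies \emph{off} $\Sigma$: moving from $\tilde x_i$ strictly toward $\tilde\gamma_1$ makes $\tilde\gamma_1$ the strictly unique nearest component of $\pi^{-1}(\gamma)$, using local finiteness from Lemma \ref{cut locus}. Hence $y:=\pi(\tilde y_1)$ has a \emph{unique} shortest arc to $\gamma$, and its continuation past $y$ meets $\Sigma$ in a single point; tracing this backward forces $\pi(\tilde x_1)=\pi(\tilde x_2)$, whence $\tilde x_1=\tilde x_2$ since $\pi$ is injective on $\mathit{int}\,\tilde e$, and then $\tilde y_1=\tilde y_2$ because both lie on the same lift of that shortest arc, which embeds under $\pi$.

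The same ``a point off $\Sigma$ remembers its cut-locus point'' principle gives the disjointness for distinct edges $e\ne f$. A two-dimensional point of $\pi(\mathit{int}(Q\cup\overline Q))$ determines, through its unique shortest arc continued to $\Sigma$, a single point of $\mathit{int}\,e$, so it cannot also lie in $\pi(\mathit{int}(Q'\cup\overline Q'))$; and the remaining points of the two regions project into $\mathit{int}\,e$ and $\mathit{int}\,f$, which are disjoint because $e$ and $f$ are distinct edges of the graph $\Sigma$. I expect the embedding step to be the main obstacle --- precisely, verifying that interior points of the sweeping arcs have a genuinely unique shortest arc to $\gamma$ (so that they avoid $\Sigma$) and converting this into injectivity of $\pi$; the strict separation between $\tilde\gamma_1$ and the other components of $\pi^{-1}(\gamma)$ along these arcs, guaranteed by local finiteness, is the crucial input.
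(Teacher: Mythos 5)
Your construction of $Q$ and $\overline{Q}$ follows the same route as the paper's own proof: it too works in the convex model of $\widetilde{F}$, places $\tilde{e}$ on the equidistant geodesic of the two nearest lifts $\tilde\gamma_1,\tilde\gamma_2$ of $\gamma$, builds $Q$ as the region swept out over the nearest-point projection of $\mathit{int}\,\tilde{e}$ to $\tilde\gamma_1$ (your Fermi-coordinate foliation is exactly its retraction $r_1$), and proves the reflection claim just as you do. Where you genuinely diverge is the embedding/disjointness step, which is the real content. The paper handles both assertions at once by a surgery argument: if shortest arcs based at points of distinct lifts $\tilde{e}\neq\tilde{f}$ crossed, exchanging terminal segments would give broken paths at most as long as the originals that can be strictly shortened in their proper homotopy classes, contradicting minimality; embeddedness is the case where $\tilde{f}$ is a deck translate of $\tilde{e}$. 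You instead use the principle that an interior point of a minimizing segment lies off $\Sigma$ and has a unique shortest arc to $\gamma$, whose geodesic continuation recovers the unique point of $\mathit{int}\,e$ it came from; this ``owner map'' yields injectivity and the disjointness for $e\neq f$ simultaneously. Both routes are sound. Yours avoids the crossing analysis entirely (the paper's surgery tacitly assumes the two arcs cross transversely, and the tangential or collinear configurations deserve a word), while the paper's surgery lemma is the tool it reuses verbatim for the analogous claims in Proposition \ref{quad decomp}.

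Two corrections to your write-up. First, your crucial uniqueness claim does not follow from local finiteness alone: one needs the equality case of the triangle inequality --- if some other lift $\tilde\gamma$ were at distance $d-t$ from $\tilde{y}$, concatenation would produce a shortest arc from $\tilde{x}$ to $\tilde\gamma$ passing through $\tilde{y}$, hence coinciding with the perpendicular to $\tilde\gamma_1$, which contradicts disjointness of the lifts of $\gamma$. Local finiteness only guarantees nearest lifts exist. Second, your cusp parenthetical is backwards: since $\gamma$ is compact, distinct lifts $\tilde\gamma_1,\tilde\gamma_2$ never share an ideal point (the paper records exactly this). What happens when $e$ exits a cusp is that the equidistant $\lambda_{12}$ itself has an ideal endpoint there, so $\tilde{e}$ is a ray, $Q$ acquires an ideal summit vertex, and there is only one finite ``extreme perpendicular''; the sweeping argument survives, but your description of the legs needs this adjustment.
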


\begin{proof} We regard $\widetilde{F}$ as a convex subset of $\mathbb{H}^2$ bounded by the disjoint union of geodesics $p^{-1}(\partial F)$. Let $\tilde{e}$ be a lift of $e$ to an edge of $\widetilde{\Sigma} = p^{-1}(\Sigma)$.  For any $x\in\mathit{int}\,e$, by Lemma \ref{cut locus} there are exactly two shortest arcs $\lambda_1$ and $\lambda_2$ joining $x$ to $\gamma$.  If $\tilde\lambda_1$ and $\tilde\lambda_2$ are their respective lifts to $\widetilde{F}$ based at $\tilde{x}\in\tilde{e}$ then they terminate at different components $\tilde\gamma_1$ and $\tilde\gamma_2$ of $p^{-1}(\gamma)$.  As argued in the proof of Lemma \ref{cut locus}, all of $\tilde{e}$ is contained in the geodesic consisting of points equidistant from $\tilde\gamma_1$ and $\tilde\gamma_2$.  And for any $\tilde{x}\in\tilde{e}$, the reflection in this geodesic exchanges the shortest arcs from $\tilde{x}$ to $\tilde\gamma_1$ and $\tilde\gamma_2$; that is, it exchanges $\tilde\lambda_1$ and $\tilde\lambda_2$.

We note that the geodesic containing $\tilde{e}$ does not share an endpoint at infinity with $\tilde\gamma_1$ or $\tilde\gamma_2$ since these geodesics do not share endpoints, $\gamma$ being compact.

Let $r_1\co\mathbb{H}^2\to\tilde\gamma_1$ be the nearest-point retraction to the geodesic $\tilde\gamma_1$.  For any $\tilde{x}\in\mathbb{H}^2-\tilde\gamma_1$, if $\tilde\lambda$ is the shortest arc from $\tilde{x}$ to $\tilde\gamma_1$ then $r_1$ takes all of $\tilde\lambda$ to its endpoint in $\tilde\gamma_1$, and in fact $r_1^{-1}(r_1(\tilde{x}))$ is the geodesic containing $\tilde\lambda$.  By standard hyperbolic trigonometric arguments, $r_1$ takes the geodesic containing $\tilde{e}$ injectively to an interval with compact closure, since as noted above this geodesic does not share an endpoint with $\tilde{\gamma}_1$. The image of $\mathit{int}\,\tilde{e}$ under $r_1$ is thus an open sub-interval with compact closure $I$.

We let $Q$ be the region in $r_1^{-1}(I)$ between $\tilde\gamma_1$ and the geodesic containing $\tilde{e}$. Each point in the interior of $I$ is the image of a unique point $\tilde{x}$ in $\tilde{e}$, and by the above its contribution to $Q$ is the shortest arc $\tilde\lambda_1$ joining $\tilde{x}$ to $\tilde\gamma_1$.  The frontier of $Q$ in $\widetilde{F}$ is the union $\tilde{e}$ with $r_1^{-1}(\partial I)$, which is itself a disjoint union of two geodesic arcs that each meet $\tilde\gamma_1$ perpendicularly.  Thus $Q$ is a Saccheri quadrilateral with base $I$, legs the components of $r_1^{-1}(I)$, and summit $\tilde{e}$.  Reflection in the geodesic containing $\tilde{e}$ further takes $Q$ to the corresponding object $\overline{Q}$ for $\tilde\gamma_2$, since it exchanges the arcs $\tilde\lambda_1$ and $\tilde\lambda_2$ starting at $\tilde{x}$ for each $\tilde{x}\in\mathit{int}\,e$.

It remains to show that the interior of $Q\cup\overline{Q}$ is embedded by $\pi$, and that it does not intersect the image of $\mathit{int}(Q'\cup\overline{Q}')$ for $Q$ and $Q'$ corresponding to any edge $f$ of $\Sigma$ distinct from $e$.  These assertions both follow from the fact that for any edge $\tilde{f}$ of $\widetilde{\Sigma}$ distinct from $\tilde{e}$, the union of quadrilaterals $Q'\cup\overline{Q}'$ that intersect at $\tilde{f}$ does not overlap $Q\cup\overline{Q}$: for the first assertion, take $\tilde{f}$ to be a lift of $e$ distinct from $\tilde{e}$, and for the second let it lift $f\neq e$.  The non-overlapping of these quadrilateral unions follows in turn from the standard ``surgery'' argument below. 

Suppose that $\lambda$ and $\lambda'$ are shortest arcs from $x\in\tilde{e}$ and $y\in\tilde{f}$ to $p^{-1}(\gamma)$, respectively, that cross at some point $z$.  Taking $\lambda_0$ and $\lambda'_0$ to be their respective sub-arcs from $x$ and $y$ to $z$, and $\lambda_1$ and $\lambda_1'$ the complementary arcs, either $\lambda_0\cup\lambda_1'$ is at most as long as $\lambda$ or $\lambda_0'\cup\lambda_1$ is at most as long as $\lambda'$.  But since $\lambda$ intersects $\lambda'$ transversely, each of $\lambda_0\cup\lambda_1'$ and $\lambda_0'\cup\lambda_1$ has a strictly shorter geodesic arc to $p^{-1}(\gamma)$ in its proper homotopy class, contradicting the hypothesis that both $\lambda$ and $\lambda'$ were shortest.
\end{proof}

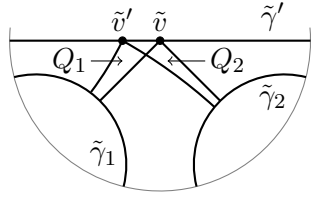
\begin{figure}
\begin{tikzpicture}

\draw [thin, gray] (-1.9924,0.1743) arc (175:365:2);
\draw [thick] (-2,0) -- (2,0);
\draw [fill] (0,0) circle [radius=0.05];
\node [above] at (0,0) {$\tilde{v}$};
\node [above] at (1.5,0) {$\tilde\gamma'$};

\draw [thick] (1.94,-0.485) arc (75.96:194.04:1.2);
\node at (1.5,-0.75) {$\tilde\gamma_2$};
\draw [thick] (-1.94,-0.485) arc (104.04:-14.04:1.2);
\node at (-0.75,-1.5) {$\tilde\gamma_1$};
\draw [thick] (0,0) -- (0.8,-0.8);
\draw [thick] (0,0) -- (-0.8,-0.8);

\draw [fill] (-0.5,0) circle [radius=0.05];
\node [above] at (-0.5,0) {$\tilde{v}'$};
\draw [thick] (-0.5,0) arc (-25.95:-37:4.164);
\draw [thick] (-0.5,0) arc (60.67:47.7:6.675);


\node at (-1.2,-0.27) {$Q_1$};
\draw [->] (-0.92,-0.27) -- (-0.5,-0.27);
\node at (0.9,-0.27) {$Q_2$};
\draw [<-] (0.1,-0.27) -- (0.6,-0.27);

\end{tikzpicture}

\caption{Quadrilaterals determined by points of $\Sigma\cap\partial F$, for $k=2$.}
\label{two quads}
\end{figure}

\begin{lemma} For $F$, $\gamma$, and $\Sigma$ as in Lemma \ref{cut locus}, $\Sigma$ is properly embedded in $F$ and intersects each compact component of $\partial F-\gamma$. A sufficiently small neighborhood of each rank-one cusp $c$ of $F$ intersects $\Sigma$ in a non-empty disjoint union of geodesic rays that exit $c$.\end{lemma}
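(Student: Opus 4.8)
The plan is to work in the universal cover, using the description from Lemma~\ref{cut locus} of $\widetilde\Sigma = p^{-1}(\Sigma)$ as a locally finite union of equidistant geodesics between lifts of $\gamma$, with $\widetilde\Sigma$ the locus of points having at least two nearest such lifts. I would dispatch the cusp assertion first, since it is the crux and also feeds into properness. Fix a rank-one cusp $c$, let $P$ generate its parabolic stabilizer fixing $v\in\partial_\infty\mathbb{H}^2$, and pass to the upper half-plane model with $v=\infty$ and $P(z)=z+t$, $t>0$. Since $\gamma$ is compact and $v$ is a parabolic fixed point, no lift of $\gamma$ can have $v$ as an endpoint (a hyperbolic element whose axis ends at $v$ would share a fixed point with $P$, contradicting discreteness), so every lift is a semicircle with finite endpoints. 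These semicircles form a $P$-invariant, locally finite family; the largest radius $\rho_{\max}$ occurring among them is realized by finitely many $P$-orbits, each a family $\{C_0+nt\}_{n\in\mathbb{Z}}$ of congruent semicircles. Two key facts drive the argument: the distance from a high point $x+iy$ to a semicircle of radius $\rho$ has leading term $\log(y/\rho)$, so the nearest lifts to points deep in the cusp are exactly the maximal-radius semicircles; and any two semicircles of equal radius are symmetric across the vertical geodesic through the midpoint of their centers, which is therefore their equidistant locus and ends at $v$.

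Combining these, I would choose a horoball $B=\{y>h\}$ with $h$ large enough to embed in $F$ and large enough (depending on the finitely many radius gaps and on $t$) that within $B$ every point's nearest lift of $\gamma$ lies in a maximal-radius orbit; this is possible because any smaller-radius lift is uniformly farther for $y$ large, and at the symmetric walls the two equidistant maximal-radius semicircles realize the minimum. Then $\widetilde\Sigma\cap B$ is precisely the union of the vertical geodesics separating adjacent maximal-radius semicircles, a disjoint family of vertical rays limiting to $v$. These project to geodesic rays exiting $c$; since $P$ cyclically permutes the walls within each orbit, each maximal-radius orbit contributes one such ray. The family is nonempty (adjacent translates always produce a wall) and, after shrinking $B$ if needed, is embedded and disjoint, establishing the cusp assertion.

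For proper embeddedness, Lemma~\ref{cut locus} already gives that $\Sigma$ is embedded and a locally finite graph with geodesic edges; being a locally finite union of closed cells, $\Sigma$ is a closed subset of $F$. (It is moreover bounded away from $\gamma$, since within the embedded collar of the compact geodesic $\gamma$ the nearest-point projection to $\gamma$ is unique, so no point of the open collar lies in $\Sigma$.) Properness of the inclusion then follows directly: any compact $K\subset F$ meets only finitely many cells, by local finiteness, and each closed edge is either a compact geodesic segment or, by the cusp analysis just completed, a geodesic ray properly embedded in a cusp; in either case its intersection with $K$ is compact, whence $\Sigma\cap K$ is compact.

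Finally, to show $\Sigma$ meets each compact component $\gamma'$ of $\partial F-\gamma$, let $\tilde\gamma'$ be a lift of the closed geodesic $\gamma'$, stabilized by a hyperbolic element $g$. Off $\widetilde\Sigma$ the assignment of a nearest lift of $\gamma$ is locally constant along $\tilde\gamma'$, by the $\epsilon$-neighborhood argument of Lemma~\ref{cut locus}. Were this assignment constant on a fundamental segment of $\tilde\gamma'$, equivariance under $g$ would force $g$ to stabilize the nearest lift $\tilde\gamma$; but a hyperbolic isometry stabilizes no complete geodesic other than its own axis, so $\tilde\gamma=\tilde\gamma'$, contradicting $\gamma\neq\gamma'$. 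Hence the assignment changes value, and a point of $\gamma'$ where it changes has two shortest arcs to $\gamma$, so it lies in $\Sigma\cap\gamma'$. I expect the cusp analysis to be the main obstacle: rigorously showing that deep in the cusp the nearest lift always lies in a maximal-radius orbit, uniformly across the fundamental strip and in particular near the separating walls, requires care with the distance estimates and the choice of $h$, whereas closedness, properness, and the boundary intersection are comparatively routine given Lemma~\ref{cut locus}.
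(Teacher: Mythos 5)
Your proposal is correct, and two of its three parts track the paper's own proof closely while the third takes a genuinely different route. The cusp analysis is essentially the paper's argument: pass to the upper half-plane with the parabolic acting by $z\mapsto z+\tau$, note that the relevant lifts are semicircles centered on $\mathbb{R}$ (no lift can end at the parabolic fixed point), extract the maximal radius $r_0$ attained along finitely many parabolic orbits with all other radii at most $r_0-\epsilon$, show by a distance estimate that points deep in the cusp have their nearest lift among the maximal-radius semicircles, and identify $\widetilde{\Sigma}$ there with the vertical equidistant geodesics between adjacent maximal-radius semicircles; your "leading term $\log(y/\rho)$" heuristic is exactly the estimate the paper makes rigorous by comparing vertical and horizontal path lengths, and you rightly flag this as the step needing care. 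Your properness argument is also fine and is actually more explicit than the paper, which leaves properness as an implicit consequence of local finiteness plus the boundary and cusp analyses; since a subset of a locally compact Hausdorff space is properly embedded exactly when it is closed, your closedness observation already suffices. Where you genuinely diverge is in showing $\Sigma$ meets each compact component $\gamma'$ of $\partial F-\gamma$: you give a soft equivariance argument --- the nearest-lift assignment along a lift $\tilde\gamma'$ is locally constant off $\widetilde{\Sigma}$, and if it were globally constant the hyperbolic deck transformation with axis $\tilde\gamma'$ would have to stabilize a lift of $\gamma$ distinct from its axis, which is impossible --- whereas the paper fixes $v\in\gamma'$ with a unique shortest arc and runs an intermediate value theorem argument on the geodesic lengths $a(c)$ and $a'(c)$ in the two proper homotopy classes of arcs obtained by going around $\gamma'$ either way, using the explicit identities (\ref{quad}) and (\ref{co-quad}) for quadrilaterals with two right angles. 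Your route is shorter and computation-free; the paper's trigonometric route buys more, namely the fact (its proof's first paragraph) that the points of $\Sigma\cap\gamma'$ are \emph{isolated}, which is not in the statement of this lemma but is what makes $\gamma'-\Sigma$ a finite union of arcs and hence underpins the finite quadrilateral decomposition of Proposition \ref{quad decomp} and the counting argument in Proposition \ref{up up up}. If one adopted your proof, that discreteness statement would still need to be supplied separately.
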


\begin{proof} For a component $\gamma'$ of $\partial F-\gamma$, the fact that points of $\Sigma\cap \gamma'$ are isolated is a consequence of the hyperbolic trigonometry of quadrilaterals with two adjacent right angles: for a convex hyperbolic quadrilateral $Q$ with two right angles along a side in a geodesic $\tilde\gamma$, and a vertex $\tilde{v}$ of $Q$ joined to $\tilde\gamma$ by a side of length $b>0$ from $\tilde\gamma$, let $\alpha$ be the interior angle of $Q$ at $\tilde{v}$.  If the other vertex $\tilde{v}'$ of $Q$ off $\tilde\gamma$ is joined to it by a side of length $a>0$, and the side of $Q$ opposite $\tilde\gamma$ has length $c$ then by a version of the hyperbolic law of cosines proven in \cite[Ch.~VI.3.3]{Fenchel} we have:\begin{align}\label{quad}
	\sinh a = \sinh b\cosh c - \cosh b\sinh c \cos\alpha. \end{align}
This implies in particular, for fixed $b$ and $c$, that $a$ increases with $\alpha$.  

Given a point $v$ of $\Sigma\cap\gamma'$ that has $k$ shortest arcs to $\gamma$, each of length $b>0$, a lift $\tilde{v}$ of $v$ to the universal cover has $k$ different lifts $\tilde\gamma_1,\hdots,\tilde\gamma_k$ of $\gamma$ at distance $b$ from it.  For a point $\tilde{v}'$ on the lift $\tilde\gamma'$ of $\gamma'$ containing $v$, and each $i\in\{1,\hdots,k\}$, there is a convex hyperbolic quadrilateral $Q_i$ with right angles along a side in $\tilde\gamma_i$ and its opposite side the arc from $\tilde{v}$ to $\tilde{v'}$ in $\tilde\gamma'$. See Figure \ref{two quads}. Enumerating these as $Q_1,\hdots,Q_k$, $k\geq 2$, so that the interior angle $\alpha_1$ of $Q_1$ at $\tilde{v}$ is less than the corresponding angle $\alpha_2$ of $Q_2$ and so on, (\ref{quad}) implies that the sequence $a_1,\hdots,a_k$ is increasing, where for each $i$, $a_i$ is the length of the side of $Q_i$ joining $\tilde{v}'$ to $\tilde\gamma'$. But if $\tilde{v}'$ is near enough to $v$ then the shortest arc from its projection $v'$ to $\gamma$ is the projection of one of these sides; hence $v'$ has a unique shortest arc to $\gamma$.

We now show that each compact component $\gamma'$ of $\partial F-\gamma$ does intersect $\Sigma$.  To this end let us fix $v\in\gamma'$, which we may assume has a unique shortest arc $\lambda$ to $\gamma$, of length $b$. Then as in the proof of Lemma \ref{cut locus}, the next shortest arc from $v$ to $\gamma$ has length at least $b+\epsilon$ for some $\epsilon>0$.  If $v'\in\gamma'$ lies within distance $\epsilon/2$ from $v$ then $v'$ also has a unique shortest arc $\lambda'$ to $\gamma$, and moreover, $\lambda'$ is properly homotopic to the concatenation of $\lambda$ with the shortest arc of $\gamma'$ joining $v'$ to $v$: the geodesic representative of this arc has length less than $b+\epsilon/2$, whereas any other arc joining $v'$ to $\gamma$ must have length at least $b+\ell/2$, since its concatenation with the arc from $v$ to $v'$ has length at least $b+\epsilon$.

We may lift the picture to $\mathbb{H}^2$ so that $\lambda$ and $\lambda'$ form two sides of a quadrilateral $Q$ that has its other two sides in lifts $\tilde\gamma$ and $\tilde\gamma'$ of $\gamma$ and $\gamma'$, respectively, with right angles along $\gamma$. If $\alpha$ is the interior angle of $Q$ at the lift $\tilde{v}$ of $v$ then the length $a$ of $\lambda'$ is given by the formula (\ref{quad}), where $c<\epsilon/2$ is the distance from $v$ to $v'$.

But there is another arc of $\gamma'$ joining $v'$ to $v$, this one with length $\ell'-c$, where $\ell'$ is the length of $\gamma'$.  Concatenating this arc with $\lambda$ yields another proper homotopy class of arcs from $v'$ to $\gamma$, and the geodesic representative of this one has length $a'$ satisfying:\begin{align}\label{co-quad}
	\sinh a' = \sinh b\cosh (\ell'-c) + \cosh b \sinh(\ell'-c)\cos\alpha. \end{align}
This is because $a'$ is a side length of a quadrilateral $Q'$ with its other three sides in the same lifts of $\gamma$, $\gamma'$ and $\lambda$ as $Q$, intersecting $Q$ along this lift $\tilde\lambda$ of $\lambda$, such that the vertices of $Q$ and $Q'$ in the preimage of $v'$ are on opposite sides of $\tilde\lambda$.  In particular, $Q'$ has interior angle $\pi-\alpha$ at $\tilde{v}$.

Let us now vary the location of $v'$ along $\gamma'$, regarding $c$ as a parameter taking values in $[0,\ell']$ and taking $a$ and $a'$ as functions of $c$. Then by (\ref{quad}), $a$ approaches $b$ as $c\to 0$, and by (\ref{co-quad}),
\[ a'\to\sinh^{-1}\left(\sinh b\cosh \ell' + \cosh b \sinh\ell'\cos\alpha\right) \]
Our hypothesis that $\lambda$ is the unique shortest arc from $v$ to $\gamma$ implies that this quantity is larger than $b$, since it is the length of the shortest arc in the proper homotopy class of $\gamma'.\lambda$ (thinking of $\gamma'$ as a loop based at $v$).  On the other hand, as $c\to\ell'$ we have $a'\to b$, by (\ref{co-quad}), and by (\ref{quad}),
\[ a\to \sinh^{-1}\left(\sinh b\cosh \ell' - \cosh b \sinh\ell'\cos\alpha\right). \]
Again our hypotheses imply that this exceeds $b$, since it is the length of $\overline{\gamma'}.\lambda$.  Thus by the intermediate value theorem, there exists some $c_0$ for which $a(c_0)=a'(c_0)$.  It follows that $\gamma'$ intersects $\Sigma$, at the supremum of the set of $c\in[0,\ell']$ for which $\lambda'$ is the unique shortest arc from $v'$ to $\gamma$.  (The supremum in question is at most $c_0$.)

We now address the cusps of $F$.  By a standard consequence of the Margulis lemma, each rank-one cusp of $F$ has a \textit{embedded horoball neighborhood}; that is, the embedded image of $C/\langle g\rangle$, where $C$ is a horoball in $\mathbb{H}^2$ and $g$ generates a parabolic subgroup of $\pi_1 F$ fixing the ideal point of $C$.  (This is standard at least for surfaces without boundary; to see it for a surface $F$ with compact geodesic boundary, apply the standard fact to obtain an embedded horoball neighborhood $C_0/\langle g\rangle$ in $\mathit{DF}$ then pass to a sub-horoball neighborhood $C$ that is high enough to avoid the penetration of the (compact) geodesics in $\partial F$ into $C_0/\langle g\rangle$.)

Working in the upper half-plane model for $\mathbb{H}^2$, we may take $\infty$ to be the ideal point of $C$ and $g(z) = z+\tau$ for some $\tau\in(0,\infty)$ without loss of generality.  The boundary components of $\widetilde{F}$ then comprise a locally finite collection of disjoint geodesics, each the intersection with $\mathbb{H}^2$ of a circle centered in $\mathbb{R}$, with the collection invariant under the action of $\langle g\rangle$.  This combination of properties implies that among the circles containing these geodesics there is a maximum radius $r_0$; that the circles of radius $r_0$ are centered at points $x_1,\hdots,x_n$ and their translates by multiples of $\tau$, for some $n\geq 1$, where without loss of generality $x_1<\hdots<x_n<x_1+\tau$; and that all other circle radii are at most $r_0-\epsilon$ for some $\epsilon>0$.

It is not hard to see that the distance in $\mathbb{H}^2$ from any point with imaginary coordinate at least $r_0$ to one with imaginary coordinate at most $r_0-\epsilon$ is at least $\ln\left(\frac{r_0}{r_0-\epsilon}\right)$, with this bound attained by points that lie on the same vertical line.  On the other hand, a horizontal line segment of Euclidean length $\ell$ in $\mathbb{H}^2$, at height $y$, has hyperbolic length $\ell/y$.  Since every $x\in\mathbb{R}$ is within $\tau/2$ of a translate of some $x_i$ by some integer multiple of $\tau$, this implies that for some large enough $y_0$, every point $x+iy$ with $y\geq y_0$ has its nearest point on $\partial\widetilde{F}$ contained in a circle with radius $r_0$ centered in $\mathbb{R}$: one traverses a smaller distance by moving horizontally to a point directly over the center of a circle of radius $r_0$, then straight down to the apex of this circle, than by taking any path to a circle of radius at most $r_0-\epsilon$.

This observation implies that the intersection of $\widetilde{\Sigma}$ with the horoball $C_1 = \{x+iy\,|\,y\geq y_0\}$, which we may take to lie in $C$ without loss of generality, consists of points equidistant from at least two geodesics contained in circles of radius $r_0$ and centered at the $x_i$ or their $\langle g\rangle$-translates.  For geodesics $\gamma$ and $\gamma'$ contained in disjoint circles of radius $r_0$, centered at points $x$ and $x'$ in $\mathbb{R}$, the locus of points equidistant to $\gamma$ and $\gamma'$ is the hyperbolic geodesic contained in the vertical line through $(x+x')/2$.  (To see this note that the hyperbolic reflection through this geodesic, which is the restriction of the Euclidean reflection through the line containing it, exchanges $\gamma$ with $\gamma'$.) If $x = x_i$ and $x' = x_{i+1}$ for some $i<n$, or if $x = x_n$ and $x' = x_1+\tau$, then it is not hard to see that points on this vertical geodesic are closer to $\gamma$ and $\gamma'$ than to any other component of $\partial\widetilde{F}$, hence that this geodesic's intersection with $C_1$ lies in $\widetilde{\Sigma}$.  It follows as claimed in the lemma that $\Sigma\cap (C_1/\langle g\rangle)$ is a non-empty, finite disjoint union of geodesics exiting the cusp.
\end{proof}

\begin{prop}\label{quad decomp} Let $F$ be a complete, connected hyperbolic surface of finite area with compact geodesic boundary, $\gamma$ a component of $\partial F$, and $\Sigma$ the cut locus of $\gamma$.  For any component $\gamma'$ of $\partial F-\gamma$ and component $e_0$ of $\gamma'-\Sigma$, the set
\[ \bigcup_{x\in e_0} \{\lambda\,|\,\lambda\ \mbox{is a shortest arc from $x$ to}\ \gamma\} \]
is the embedded image of $\mathit{int}(Q)$ under the universal cover $p\co\widetilde{F}\to F$, for a Saccheri quadrilateral $Q$ in $\widetilde{F}$ with its base in $p^{-1}(\gamma)$ and its summit intersecting $p^{-1}(\gamma')$ in the closure of a lift of $e_0$.

The image of $\mathit{int}(Q)$ does not intersect that of $\mathit{int}(Q')$ for any quadrilateral $Q'$ corresponding to any other component $f_0$ of $(\partial F - \gamma) - \Sigma$, or that of $\mathit{int}(Q\cup\overline{Q})$ for any such union corresponding to an edge of $\Sigma$ as in Lemma \ref{quads}. The collection of all $p(Q)$ determines a decomposition of $F$ as a finite union of Saccheri quadrilaterals, each intersecting $\gamma$ exactly in its base and with its summit in $\Sigma\cup(\partial F-\gamma)$, and which intersect pairwise along edges if at all.\end{prop}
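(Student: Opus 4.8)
The plan is to adapt the quadrilateral construction and the shortest-arc ``surgery'' argument of Lemma \ref{quads} to the situation where the summit lies in a boundary component $\gamma'$ rather than in $\Sigma$, producing a single quadrilateral (no reflection partner $\overline{Q}$), and then to assemble these together with the quadrilaterals of Lemma \ref{quads} into a global decomposition of $F$.

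First I would build the quadrilateral $Q$ attached to a component $e_0$ of $\gamma'-\Sigma$. Since $e_0\cap\Sigma=\emptyset$, each $x\in e_0$ has a unique shortest arc $\lambda_x$ to $\gamma$, meeting $\gamma$ perpendicularly. Fix a lift $\tilde{e}_0$ of $e_0$ in a component $\tilde\gamma'$ of $p^{-1}(\gamma')$; each $\lambda_x$ lifts to a shortest arc $\tilde\lambda_{\tilde x}$ from $\tilde{x}\in\tilde{e}_0$ to $p^{-1}(\gamma)$. Because $e_0$ is connected and the unique shortest arc varies continuously, its terminal endpoint stays on a single component $\tilde\gamma$ of $p^{-1}(\gamma)$. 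As in Lemma \ref{quads}, $\tilde\gamma$ and $\tilde\gamma'$ share no ideal endpoint, since $\gamma$ and $\gamma'$ are compact, so the nearest-point retraction $r\co\mathbb{H}^2\to\tilde\gamma$ carries $\tilde\gamma'$ injectively to an interval with compact closure; the image $I\subset\tilde\gamma$ of $\overline{\tilde{e}_0}$ is then a compact subinterval. I would set $Q$ to be the region of $r^{-1}(I)$ between $\tilde\gamma$ and $\tilde\gamma'$: its base is $I$, its legs are the two extreme perpendiculars $r^{-1}(\partial I)$, which meet $\tilde\gamma$ at right angles and are the shortest arcs from the endpoints of $\tilde{e}_0$ to $\tilde\gamma$, and its summit is $\overline{\tilde{e}_0}\subset\tilde\gamma'$. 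This is the Saccheri quadrilateral of the statement, and $p$ carries $\mathit{int}\,Q$ onto the set $\bigcup_{x\in e_0}\{\lambda\}$ described there.

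Next I would establish that $p$ embeds $\mathit{int}\,Q$ and that distinct quadrilaterals have disjoint interiors, both by the surgery argument of Lemma \ref{quads}: if two shortest arcs to $p^{-1}(\gamma)$, based at interior points of $\tilde{e}_0$ and of a second lifted edge --- another lift of $e_0$, a lift of a different boundary edge $f_0$, or a cut-locus edge $\tilde{f}$ as in Lemma \ref{quads} --- crossed transversely, cutting and reconnecting would yield a strictly shorter arc in one of the two proper homotopy classes, contradicting minimality. Taking the second edge to be a distinct lift of $e_0$ gives the embedding of $\mathit{int}\,Q$, and the remaining choices rule out overlap of $\mathit{int}\,Q$ with $\mathit{int}\,Q'$ and with $\mathit{int}(Q'\cup\overline{Q}')$. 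Finally I would assemble the decomposition: every $y\in F$ has at least one shortest arc to $\gamma$, and if it is unique then $y$ lies in the image of the interior or a leg of exactly one quadrilateral, whereas if there are several then $y\in\Sigma$ lies on a cut-locus summit or $y\in\partial F-\gamma$ is a boundary summit point; the feet of all these arcs exhaust $\gamma$, so the bases tile $\gamma$. Finiteness follows from finite area with Lemma \ref{cut locus}, since $\Sigma$ is a locally finite geodesic graph and $\partial F$ is compact, so $\gamma'-\Sigma$ has finitely many components and $\Sigma$ finitely many edges (with the cusps handled as in the preceding lemma). I expect the main obstacle to be precisely this global step: checking that the boundary-summit quadrilaterals of this proposition and the cut-locus quadrilaterals $Q\cup\overline{Q}$ of Lemma \ref{quads} fit together across $\Sigma$ and along $\gamma$ to cover $F$ exactly once, with interiors disjoint and pieces meeting only along legs, shared cut-locus edges, or base segments, which requires carefully matching the edge incidences coming from the valence-at-least-three structure of $\Sigma$ in Lemma \ref{cut locus}.
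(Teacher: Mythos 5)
Your proposal is correct and takes essentially the same route as the paper's proof: the same construction of $Q$ via the nearest-point retraction to the closest lift $\tilde\gamma$ (your continuity claim is exactly the paper's open-closed argument using local finiteness of $p^{-1}(\gamma)$), the same appeal to the surgery argument of Lemma \ref{quads} for embeddedness and pairwise disjointness, and the same assembly of boundary-summit and cut-locus quadrilaterals into the decomposition. The global covering step you flag as the main obstacle is in fact treated even more tersely in the paper, which leaves it implicit in the observation that each leg is a perpendicular segment from $\gamma$ to its first meeting with $\Sigma\cup(\partial F-\gamma)$.
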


\begin{proof} The proof that a component $e_0$ of $\gamma'-\Sigma$ qives rise to a quadrilateral $Q$, for any component $\gamma'$ of $\partial F$ distinct from $\gamma$, parallels the corresponding assertion in Lemma \ref{quads}. Again for some $x\in e_0$ and a shortest arc $\lambda$ from $x$ to $\gamma$, we lift $\lambda$ to an arc $\tilde\lambda$ based at some $\tilde{x}$ in a component $\tilde{e}_0$ of $p^{-1}(e_0)$ and take $\tilde\gamma'$ and $\tilde\gamma$ to be the lifts of $\gamma'$ and $\gamma$ respectively containing $\tilde{e}_0$ and the endpoint of $\tilde\lambda$. By an open-closed argument, $\tilde\gamma$ is the closest lift of $\gamma$ to \textit{every} point of $\tilde{e}_0$. (Note that for each $\tilde{y}$ such that this holds it is the unique such lift by the definition of $e_0$, so by local finiteness of the collection of lifts of $\gamma$ there exists $\epsilon>0$ such that the same holds in an $\epsilon$-neighborhood of $\tilde{y}$ in $\tilde{e}_0$.) We then use the nearest-point retraction to $\tilde\gamma$ as in Lemma \ref{quads}.

The argument that the interior of such a quadrilateral $Q$ projects homeomorphically to $X$ and does not intersect the interior of any other embedded quadrilateral is identical to the corresponding argument for $Q\cup\overline{Q}$ in the proof of Lemma \ref{quads}.  By construction, each quadrilateral $Q$ determined by a component $e_0$ of $(\partial F-\gamma)-\Sigma$ has the closure of $e_0$ as its summit; if $Q$ is determined by an edge $e$ of $\Sigma$ then $e$ is its summit.  Again by (either) construction, the leg of each quadrilateral is the interval $\lambda$ in a geodesic ray perpendicular to $\gamma$ from some $x\in\gamma$ to its first point of intersection with $\Sigma\cup(\partial F-\gamma)$.\end{proof}

\section{The systole of loops on the three-holed sphere}\label{three hole}

The main goal of this section is to prove Theorem \ref{maximal sysloops}, characterizing the maximum value of $\mathit{sys}_p(F)$ for a three-holed sphere $F$ in terms of its boundary lengths.  But first we recall some motivation. For a fixed $k\geq 0$ and $(b_1,\hdots,b_k)\in [0,\infty)^k$, Theorem 1.2 of \cite{Gendulphe} asserts that for any complete, finite-area hyperbolic surface $F$ with $k$ geodesic boundary components of respective lengths $b_i$ (where if $b_i = 0$ the boundary component is replaced by a cusp), the value of $\mathit{sys}_p(F)$ is bounded above for all $p\in F$ by the unique positive solution $x$ to:\begin{align}\label{Gendulphe1.2}
	6(-2\chi(F)+2-k)\sin^{-1}\left(\frac{1}{2\cosh(x/2)}\right) + 2\sum_{i=1}^k \sin^{-1}\left(\frac{\cosh(b_i/2)}{\cosh(x/2)}\right) = 2\pi.
\end{align}
It further asserts that this maximum is attained at $p\in F$ if and only if the systolic loops based at $p$ divide $F$ into equilateral triangles and one-holed monogons. 

But our next result implies that for any given collection $b_1,\hdots,b_{k-1}$ of the first $k-1$ side lengths, there is an unbounded interval consisting of choices of the final side length $b_k$ which are large enough that equation (\ref{Gendulphe1.2}) lacks any positive solution. The statement of \cite[Theorem 1.2]{Gendulphe} misses this case. Theorem \ref{maximal sysloops} here fills it in for the three-holed sphere.

\begin{lemma}\label{F}  For $(b_1,\hdots,b_k)\in[0,\infty)^k$ such that $b_k = \max\{b_1,\hdots,b_k\}$, the equation (\ref{Gendulphe1.2}) has a positive solution if and only if $f_{(b_1,\hdots,b_{k-1})}(b_k)\geq \pi$, where $f_{(b_1,\hdots,b_{k-1})}$ is defined by
\[ f_{(b_1,\hdots,b_{k-1})}(x) = 6(-2\chi(F)+2-k)\sin^{-1}\left(\frac{1}{2\cosh(x/2)}\right) + 2\sum_{i=1}^{k-1} \sin^{-1}\left(\frac{\cosh(b_i/2)}{\cosh(x/2)}\right). \]
This is a continuous and decreasing function of $x$ that limits to $0$ as $x\to\infty$.\end{lemma}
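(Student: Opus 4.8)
The plan is to regard the left-hand side of (\ref{Gendulphe1.2}) as a function of $x$ and analyze it by monotonicity together with the intermediate value theorem; the only genuine subtlety is its domain. Write
\[ G(x) = f_{(b_1,\hdots,b_{k-1})}(x) + 2\sin^{-1}\left(\frac{\cosh(b_k/2)}{\cosh(x/2)}\right) \]
for the left-hand side of (\ref{Gendulphe1.2}). Each arcsine term is real-valued only where its argument lies in $[0,1]$, and since $\cosh(b_i/2)/\cosh(x/2)\leq 1$ exactly when $x\geq b_i$, the largest interval on which every term of $G$ is defined is $[b_k,\infty)$, using $b_k=\max_i b_i$. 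Consequently the positive solutions of (\ref{Gendulphe1.2}) are precisely its solutions in $[b_k,\infty)$, and it suffices to determine when $G(x)=2\pi$ has a solution there.

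First I would record the monotonicity and limiting behavior, which yields both the final sentence of the lemma and the input needed below. The map $x\mapsto\cosh(x/2)$ is continuous and strictly increasing on $[0,\infty)$ and tends to $\infty$, so each argument $\frac{1}{2\cosh(x/2)}$ and $\frac{\cosh(b_i/2)}{\cosh(x/2)}$ is continuous, strictly decreasing, and tends to $0$. Composing with $\sin^{-1}$, which is continuous and strictly increasing with $\sin^{-1}(0)=0$, and summing finitely many such terms, we find that $f_{(b_1,\hdots,b_{k-1})}$ and $G$ are both continuous and strictly decreasing, with limit $0$ as $x\to\infty$. This proves the last sentence of the lemma.

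The crux is the value of $G$ at the left endpoint of its domain. At $x=b_k$ the final argument equals $\cosh(b_k/2)/\cosh(b_k/2)=1$, so the final term is $2\sin^{-1}(1)=\pi$, giving
\[ G(b_k) = f_{(b_1,\hdots,b_{k-1})}(b_k) + \pi. \]
Because $G$ is continuous and strictly decreasing on $[b_k,\infty)$ with $\lim_{x\to\infty}G(x)=0$, its image on this interval is exactly $(0,\,G(b_k)]$. Hence $2\pi$ lies in this image --- equivalently, (\ref{Gendulphe1.2}) has a (necessarily unique) positive solution --- if and only if $2\pi\leq G(b_k)=f_{(b_1,\hdots,b_{k-1})}(b_k)+\pi$, that is, if and only if $f_{(b_1,\hdots,b_{k-1})}(b_k)\geq\pi$.

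The main obstacle is bookkeeping rather than depth: one must be careful that (\ref{Gendulphe1.2}) is defined only on $[b_k,\infty)$ and observe that the threshold $f_{(b_1,\hdots,b_{k-1})}(b_k)+\pi$ appears precisely because the $k$-th arcsine term attains its maximum value $\pi/2$ at the left endpoint $x=b_k$. With the domain correctly identified, the biconditional follows at once from strict monotonicity and the intermediate value theorem.
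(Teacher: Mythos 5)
Your proposal is correct and takes essentially the same route as the paper's proof: restrict attention to the natural domain $[b_k,\infty)$ where all arcsine terms are defined, note that both $f_{(b_1,\hdots,b_{k-1})}$ and the full left-hand side are continuous and decreasing with limit $0$, evaluate the left-hand side at the endpoint $x=b_k$ (where the final term contributes exactly $\pi$), and conclude via the intermediate value theorem that a solution exists if and only if $f_{(b_1,\hdots,b_{k-1})}(b_k)+\pi\geq 2\pi$. Your phrasing via the image $(0,\,G(b_k)]$ is a slightly cleaner packaging of the same argument, but there is no substantive difference.
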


\begin{proof} The function $f \doteq f_{(b_1,\hdots,b_{k-1})}$ records the left-hand side of the formula (\ref{Gendulphe1.2}) but excludes its final term: the summand $\sin^{-1}\left(\frac{\cosh(b_k/2)}{\cosh(x,2)}\right)$. We begin by observing that formula (\ref{Gendulphe1.2}) only makes sense for (positive) $x\in [b_k,\infty)$ (recalling that $b_k = \max_i\{b_i\}$). This is because the domain of the arcsine is the range of the sine function, $[-1,1]$, and if $0\leq x<b_k$ then $\cosh(b_k/2)/\cosh(x/2)>1$.

We next note that both $f$ and the left side of formula (\ref{Gendulphe1.2}), regarded as a function of $x$, are decreasing on $[b_k,\infty)$. This follows from the facts that $\cosh(x/2)$ and the inverse sine are increasing functions, and that the only $x$-dependence in the definitions of $f$ and (\ref{Gendulphe1.2}) is the appearance of $\cosh(x/2)$ in the denominator of each summand. Therefore if equation (\ref{Gendulphe1.2}) has a solution then its left-hand side's value at $x=b_k$ is at least $2\pi$. This is equivalent to $f(b_k) = \pi$ because the final term of the left side of (\ref{Gendulphe1.2}) is $\sin^{-1}\left(\frac{\cosh(b_k/2)}{\cosh(b_k/2)}\right) = \pi/2$ for $x = b_k$.  

On the other hand, $f(x)$ limits to $0$ as $x\to\infty$, since $\sin^{-1}(0) = 0$ and the inverse sine is continuous. By the intermediate value theorem, this implies that if $f(b_k) \geq \pi$ then there exists $x\in [b_k,\infty)$ (unique, since $f$ is decreasing) such that $f(b_k)=\pi$ and hence (\ref{Gendulphe1.2}) holds.\end{proof}


In the rest of this section, ``$F$'' always refers to a three-holed sphere with geodesic boundary. For each such $F$ it is well known that there is a unique shortest arc between any two components of $\partial F$, meeting both of them at right angles, and that the collection of all three such arcs cuts $F$ into a pair of isometric right-angled hexagons exchanged by a reflective involution of $F$.  (Establishing this is a key step in the description of ``Fenchel--Nielsen coordinates'' for the Teichm\"uller spaces of arbitrary hyperbolic surfaces, see eg.~\cite[\S 10.6]{FaMa}.) 

Below we will denote the geodesic boundary components of $F$ as $B_1$, $B_2$, and $B_3$ and their lengths as $b_1$,  $b_2$, and $b_3$, respectively. Then the resulting right-angled hexagons have alternate side lengths $\frac{b_1}{2}, \frac{b_2}{2}$, and $\frac{b_3}{2}$, which determine them up to isometry (see eg.~\cite[Theorem 3.5.14]{Ratcliffe}), and it follows that $F$ itself is determined up to isometry by the triple $(b_1,b_2,b_3)$.

We now begin the process of understanding $\mathit{sys}_p(F)$ for fixed $(b_1,b_2,b_3)$.

\begin{lemma}\label{simple loop} For any $p\in F$, $\mathit{sys}_p(F) = \min\{\ell_1,\ell_2,\ell_3\}$, where $\ell_i$ is the length of a simple loop freely homotopic to $B_i$ for each $i\in\{1,2,3\}$. 

Writing $F = H\cup\bar{H}$ for isometric right-angled hexagons $H$ and $\bar{H}$ such that $H\cap\bar{H}$ is a disjoint union of three edges, if $p$ lies in an edge of $H\cap\bar{H}$ that has endpoints in $B_i$ and $B_j$, then $\mathit{sys}_p(F) = \min\{\ell_i,\ell_j\}$. In fact, for such a point $p$ and any loop $\gamma$ based at $p$ which is not freely homotopic to a power of either $B_i$ or $B_j$, the length of $\gamma$ is strictly greater than $\max\{\ell_i,\ell_j\}$.\end{lemma}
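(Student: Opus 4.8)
The plan is to pass to the universal cover and reduce the statement to an inequality between translation distances of a single lift of $p$. Fix a locally isometric universal cover $\pi\colon \widetilde F\to F$, realizing $\widetilde F$ as a convex subset of $\mathbb{H}^2$ bounded by the disjoint geodesics covering $B_1,B_2,B_3$, and choose a lift $\tilde p$ of $p$ lying on a lift $L$ of the seam $H\cap\bar H$ through $p$. Since $p$ lies on $H\cap\bar H$, the reflection $\sigma$ exchanging $H$ and $\bar H$ fixes $p$, and $L$ is the fixed geodesic of a lift $\tilde\sigma$ of $\sigma$. Writing $G=\pi_1(F,p)$, the length of any loop $\gamma$ based at $p$ is at least $d(\tilde p, g\tilde p)$, where $g\in G$ is the class of $\gamma$; so it suffices to prove $d(\tilde p, g\tilde p) > \max\{\ell_i,\ell_j\}$ for every $g$ that is not conjugate into $\langle x_i\rangle$ or $\langle x_j\rangle$. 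Here $x_i, x_j\in G$ are the primitive translations along the two boundary geodesics $\tilde B_i,\tilde B_j$ nearest $\tilde p$, which are perpendicular to $L$; these generate $G$ freely, and the standard distance formula $\sinh\!\big(\tfrac12 d(\tilde p, x_i\tilde p)\big) = \cosh\!\big(d(\tilde p,\tilde B_i)\big)\sinh(\tfrac12 b_i)$ identifies $\ell_i = d(\tilde p, x_i^{\pm1}\tilde p)$, and likewise $\ell_j$.

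Next I would exploit the free product structure. An element $g\in G$ is conjugate into $\langle x_i\rangle$ or $\langle x_j\rangle$ exactly when its cyclically reduced word in $x_i,x_j$ is a power of a single generator; so the elements to treat are precisely those whose cyclically reduced form involves both $x_i^{\pm1}$ and $x_j^{\pm1}$ (this includes all powers of the third boundary class, since it equals $(x_i x_j)^{-1}$ up to conjugacy). The lifts of the three seams are pairwise disjoint geodesic arcs cutting $\widetilde F$ into right-angled hexagons whose dual adjacency graph is a tree, because the seams cut $F$ into the two simply connected hexagons $H,\bar H$. I would record that the segment $[\tilde p, x_i\tilde p]$ realizing $\ell_i$ crosses exactly one wall, a lift of the seam $s_{ik}$ met by $\tilde B_i$, and symmetrically $[\tilde p, x_j\tilde p]$ crosses a single lift of $s_{jk}$. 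The combinatorial goal is then that if the reduced word of $g$ uses both generators, then $[\tilde p, g\tilde p]$ crosses a lift of $s_{ik}$ \emph{and} a lift of $s_{jk}$, i.e.\ it leaves the collar regions of both $\tilde B_i$ and $\tilde B_j$.

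The main obstacle is converting this crossing statement into the strict metric inequality against the \emph{maximum} of $\ell_i,\ell_j$, not merely the minimum. The route I would pursue is a Schottky/ping-pong setup: take disjoint half-planes $D_i^{\pm},D_j^{\pm}$ bounded by perpendiculars to $\tilde B_i,\tilde B_j$ through the relevant seam-lifts, with $\tilde p$ in the central complementary region and $x_i^{\pm1}\tilde p\in D_i^{\pm}$, $x_j^{\pm1}\tilde p\in D_j^{\pm}$. For $g$ cyclically reduced using both generators, the orbit point $g\tilde p$ lands in one of these half-planes but is pushed strictly beyond the corresponding single-generator image, so that convexity of $d(\tilde p,\cdot)$ along $[\tilde p, g\tilde p]$ forces the segment to cross both an $i$-wall and a $j$-wall; a monotonicity estimate for distance across these walls then yields $d(\tilde p, g\tilde p)>\ell_i$ and $d(\tilde p, g\tilde p)>\ell_j$ simultaneously. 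Strictness comes from the positive gaps between the ping-pong regions, which in turn rests on the fact (by the reasoning in Lemma~\ref{cut locus}, $\gamma$ being compact) that $\tilde B_i$ and $\tilde B_j$ share no ideal endpoint, so all the separating perpendiculars stay a definite distance apart. Verifying the ping-pong inclusions and the across-a-wall monotonicity in the right-angled hexagon geometry — in particular checking that a word beginning with $x_i$ still beats $\ell_j$ — is the delicate computational heart; the symmetry $\tilde\sigma x_i\tilde\sigma^{-1}=x_i^{-1}$, $\tilde\sigma x_j\tilde\sigma^{-1}=x_j^{-1}$ helps by letting one treat $g$ and $\tilde\sigma g\tilde\sigma^{-1}$ together and so halve the case analysis.
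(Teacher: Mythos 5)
Your setup is sound as far as it goes: passing to the universal cover, identifying $\ell_i = d(\tilde p, x_i^{\pm 1}\tilde p)$ via the formula of Lemma \ref{loop length}, and reducing the final assertion to the bound $d(\tilde p, g\tilde p) > \max\{\ell_i,\ell_j\}$ for every $g$ whose cyclically reduced form uses both generators is a correct reformulation, and the wall-crossing observation is true. The genuine gap is that the step carrying all of the content --- converting wall crossings into the strict inequality against the \emph{maximum} --- is never carried out, and the mechanism you sketch is circular at the decisive point: ``$g\tilde p$ \ldots is pushed strictly beyond the corresponding single-generator image'' is essentially a restatement of $d(\tilde p,g\tilde p)>\ell_i$, not an argument for it. Note that the required inequality is sharp, not coarse: already for $g = x_ix_j$ you must beat $\ell_i$ and $\ell_j$ themselves, i.e.\ the displacements of the generators, so no estimate of generic ping-pong type (displacement bounded below by the gaps between the half-planes $D_i^{\pm}, D_j^{\pm}$ times the number of walls crossed) can suffice; when $b_i\to\infty$ with $b_j,b_k$ fixed, $\ell_i\geq b_i$ blows up while the seam through $\tilde p$ stays of bounded length. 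The paper supplies the missing mechanism with an unfolding surgery that has no counterpart in your outline: fold the geodesic loop at its first crossing of the seam $e_i$ so that its two halves become broken geodesics lying in the convex hexagons $H$ and $\bar H$ respectively; since the loop also meets $e_j$, at least one half has a genuine corner, so straightening within the hexagons strictly shortens; the straightened loop crosses $e_i$ once and no other seam, hence represents the class of the simple loop about $B_j$ and has length at least $\ell_j$; thus the original loop is strictly longer than $\ell_j$, and the symmetric argument beats $\ell_i$. Some input of exactly this kind (or an honest trigonometric substitute) is what your ``delicate computational heart'' would have to contain, and it is precisely what is missing.

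There is also a scope mismatch you would need to repair. The elements that must be handled are those whose \emph{cyclically} reduced form involves both generators, and these include words that are not themselves cyclically reduced, e.g.\ $g = x_i^2 x_j x_i^{-1}$; but your metric step is formulated only ``for $g$ cyclically reduced using both generators.'' The obvious repair --- replacing $g$ by a cyclically reduced conjugate --- is unavailable, because displacement at the fixed point $\tilde p$ is not a conjugacy invariant, and the conjugacy-invariant lower bound, translation length, is far too weak: $x_ix_j$ is conjugate to the third boundary class, of translation length $b_k$, which can be much smaller than $\max\{\ell_i,\ell_j\}$. (The paper's surgery sidesteps this entirely by arguing directly with the based loop, using only that it crosses both $e_i$ and $e_j$.) Finally, the Lemma's first assertion, which concerns arbitrary $p\in F$, is not addressed anywhere in your outline; the paper dispatches it using that the systole at $p$ is simple (by a surgery argument) and that every essential simple closed curve in a three-holed sphere is freely homotopic to a boundary component, but some such argument still needs to be given.
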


\begin{proof} Fix $p\in F$. By standard results there is a unique geodesic loop (parametrized proportional to arclength, with parameter domain $[0,1]$) in the based homotopy class of every homotopically nontrivial loop based at $p$, the \textit{systole at $p$}, which minimizes length in this homotopy class. That is, the systole at $p$ has length $\mathit{sys}_p(F)$.

As the referee has pointed out, the Lemma's first assertion follows quickly from the well known facts that every simple closed curve on the three-holed sphere is freely homotopic to some $B_i$, and that closed geodesics such as the $B_i$ minimize length in their free homotopy classes. By a standard surgery argument the systole at any $p$ is a simple loop, so its length is at least that of the $B_i$ to which it is homotopic. Below we give a more elaborate surgery argument then follow the same strategy to prove the Lemma's further assertions.

Let $\gamma\co [0,1]\to F$ be a loop based at $p$ of minimal length over all non-trivial homotopy classes; i.e.~with length $\ell = \mathit{sys}_p(F)$. Partition $[0,1]$ using $\gamma$'s intersections with $H$ and $\bar{H}$, into:
\[ 0 = t_0 < t_1 < \hdots < t_n = 1, \]
such that for each $i>0$, $\gamma|_{[t_{i-1},t_i]}$ maps into either $H$ or $\bar{H}$, and if $\gamma|_{[t_{i-1},t_i]}$ maps into $H$ then $\gamma|_{[t_{i},t_{i+1}]}$ maps into $\bar{H}$ and vice-versa. We observe that $n>1$ since neither $H$ nor $\bar{H}$ contains a closed geodesic loop, each being isometric to a convex subset of $\mathbb{H}^2$.

Assume first that $p\in H$ does not lie in one of the three arcs of $H\cap\bar{H}$. In this case $n\geq 3$ since both $\gamma|_{[t_0,t_1]}$ and $\gamma|_{[t_{n-1},t_n]}$ map into $H$. We claim that in fact $n=3$; i.e., only the single segment $\gamma|_{[t_1,t_2]}$ of $\gamma$ maps into $\bar{H}$. Before proving this claim we note that in this case, $\gamma$ is simple. And $\gamma|_{[t_1,t_2]}$ has its endpoints on different components of $H\cap\bar{H}$, since it is geodesic and $\bar{H}$ is isometric to a right-angled hexagon in $\mathbb{H}^2$. These components of $H\cap\bar{H}$ have endpoints on a common component $B_i$ of $\partial F$, and it is easy to see directly that $\gamma$ is freely homotopic to $B_i$.

We prove the claim by contradiction. Assuming that $n>3$, for any $k \geq 2$ such that $\gamma|_{[t_k, t_{k+1}]}$ lies in $\bar{H}$, replace the segment with its mirror image $\bar{\gamma}|_{[t_k,t_{k+1}]}$ in $H$.
 Since the reflection exchanging $H$ with $\bar{H}$ leaves $H\cap \bar{H}$ fixed, we thus obtain a continuous broken geodesic
\[ \gamma|_{[t_0,t_1]}.\gamma|_{[t_1,t_2]}.\left(\gamma|_{[t_2,t_3]}.\bar{\gamma}|_{[t_3, t_4]}.\cdots .\gamma|_{[t_{n-1},t_n]}\right) \]
based at $p$, with the same length as $\gamma$.  
Now the entire broken geodesic within  the parentheses above maps into $H$, which as we previously mentioned is isometric to a convex subset of $\mathbb{H}^2$, so it can be replaced in the concatenation by the strictly shorter geodesic arc in $H$ that joins its endpoints.  Arguing as in the $n=3$ case above shows that the resulting simple loop at $p$ is freely homotopic to a boundary component and hence not null-homotopic, contradicting our hypothesis that $\ell = \mathit{sys}_p(F)$.

The case that $p\in \bar{H} - (H\cap\bar{H})$ is completely analogous, so let us now take $p\in H\cap\bar{H}$. In this case we claim that $n=2$, i.e.~that $\gamma = \gamma|_{[t_0,t_1]}.\gamma|_{[t_1,t_2]}$. If this is so then clearly $\gamma$ is simple and freely homotopic to the boundary component containing the endpoints of the edges of $H \cap\bar{H}$ containing $\gamma(t_0) = p = \gamma(t_2)$ and $\gamma(t_1)$. The claim's proof is similar to the one above: for each odd $k>2$, replacing $\gamma|_{[t_{i-1},t_i]}$ by its reflection $\bar\gamma|_{[t_{i-1},t_i]}$ across $H\cap\bar{H}$ yields the broken geodesic
\[ \gamma|_{[t_0,t_1]}.\left(\gamma|_{[t_1,t_2]}.\bar\gamma|_{[t_2,t_3]}.\cdots .\bar\gamma|_{[t_{n-1},t_n]}\right), \]
a loop based at $p$ with the same length as $\gamma$ but such that the entire arc in parentheses maps into one of $H$ or $\bar{H}$. Replacing that arc by the geodesic arc between its endpoints in the same hexagon produces a shorter but still homotopically non-trivial loop based at $p$, a contradiction.

Now let us continue with $p\in H\cap \bar{H}$, say in the edge from $B_i$ to $B_j$, and let $\gamma\co[0,1]\to F$ be an arbitrary geodesic loop based at $p$, of (not necessarily minimal) length $\ell$, that is not freely homotopic to a power of $B_i$ or $B_j$ for distinct $i,j\in\{1,2,3\}$. Then $\gamma$ intersects both the edge $e_i$ of $H\cap\bar{H}$ opposite $B_i$, since $F-e_i$ deformation retracts to $B_i$, and $\gamma$ similarly intersects the edge $e_j$ opposite $B_j$. Upon decomposing $\gamma$ into segments as above, and taking $k$ to be minimal such that $\gamma(t_k)\in e_i$,we can replace $\gamma$ by the broken geodesic loop
\[ \tilde\gamma = (\gamma|_{[t_0,t_1]}.\tilde\gamma|_{[t_1,t_2]}.\cdots.\tilde\gamma|_{[t_{k-1}t_k]}).(\tilde\gamma|_{[t_kt_{k+1}]}.\cdots.\tilde\gamma|_{[t_{n-1}t_n]}), \]
where $\tilde\gamma|_{[t_{i-1}t_i]} = \gamma|_{[t_{i-1}t_i]}$ or $\tilde\gamma|_{[t_{i-1}t_i]} = \bar\gamma|_{[t_{i-1}t_i]}$ for each $i$, chosen so that each segment inside the left-hand parentheses above lies in the same hexagon (out of $H$ or $\bar{H}$) and each segment in the right-hand group lies in the other. At least one of the parentheses contains at least two segments, since $\gamma$ also intersects $e_j$. Therefore the geodesic representative of $\tilde\gamma$ is strictly shorter than $\tilde\gamma$, which has length $\ell$. This geodesic representative has length $\ell_j$, since it intersects $e_i'$ once, so $\ell>\ell_j$. Redefining $k$ as the minimal index such that $\gamma(t_k)\in e_j$ and running the analogous argument shows that $\ell>\ell_i$ so we have $\ell>\max\{\ell_i,\ell_j\}$ for such $\gamma$.\end{proof}

\begin{figure}
    \centering
        \includegraphics[scale=0.75]{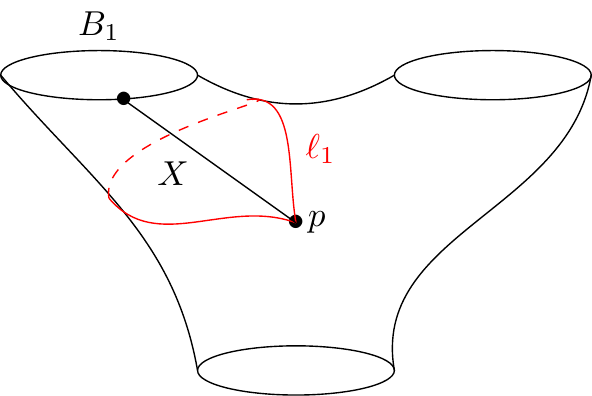}
        \caption{}
        \label{fig:inside}
\end{figure}

\begin{lemma}\label{loop length} For any $p\in F$ and any boundary component $B_i$, the simple loop based at $p$ and freely homotopic to $B_i$ has length $\ell_i$ determined by
	$$\sinh{\left(\frac{\ell_i}{2}\right)} = \cosh{X_i}\sinh{\left(\frac{b_i}{2}\right)}, $$
where $X_i$ is the length of the shortest geodesic arc in $F$ from $p$ to $B_i$. (See Figure \ref{fig:inside}.) The angle $\delta_i$ between the loop and the arc satisfies $\sin(\delta_i) = \frac{\cosh(b_i/2)}{\cosh(\ell_i/2)}$. \end{lemma}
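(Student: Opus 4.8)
The plan is to realize the loop and the shortest arc as two edges of a single Saccheri quadrilateral and then read off both formulas from hyperbolic quadrilateral trigonometry, using the law of cosines (\ref{quad}) already established above. First I would fix the simple loop $\gamma_i$ based at $p$ and freely homotopic to $B_i$ produced by Lemma \ref{simple loop}, together with the shortest arc $\lambda$ from $p$ to $B_i$, which meets $B_i$ perpendicularly and has length $X_i$. Since $\gamma_i$ is simple and boundary-parallel it cobounds an embedded annulus $A_i$ with $B_i$, and $\lambda$ lies in $A_i$. Cutting $A_i$ along $\lambda$ produces a quadrilateral $Q$ whose summit is $\gamma_i$ (length $\ell_i$), whose base is $B_i$ (length $b_i$), whose two legs are the two copies of $\lambda$ (each of length $X_i$), and which has right angles along its base. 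To make these metric claims rigorous I would lift to $\mathbb{H}^2$: if $g$ is the holonomy of the nearest lift $\tilde{B}_i$ of $B_i$ to a lift $\tilde{p}$ of $p$, then $\gamma_i$ lifts to the geodesic segment from $\tilde{p}$ to $g\tilde{p}$ and $\lambda$ to the perpendicular from $\tilde{p}$ to $\tilde{B}_i$, so that the feet of the perpendiculars from $\tilde{p}$ and $g\tilde{p}$ bound a segment of $\tilde{B}_i$ of length equal to the translation length $b_i$ of $g$. Thus $Q$ is genuinely a Saccheri quadrilateral, and its interior angle at $p$ between leg and summit is exactly $\delta_i$.

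Next I would exploit the reflective symmetry of $Q$ across the common perpendicular bisector of its base and summit, which cuts $Q$ into two congruent Lambert quadrilaterals. Each has three right angles; its sides are $b_i/2$ (half-base), $X_i$ (leg), $\ell_i/2$ (half-summit), and the segment along the bisector, with its single non-right interior angle $\delta_i$ occurring at $p$.

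Both identities then follow by applying (\ref{quad}) to this Lambert quadrilateral, taking the bisector as the side carrying the two adjacent right angles. Applied at the right-angle vertex (the foot of $\lambda$) the $\cos$ term vanishes and one obtains $\sinh(\ell_i/2) = \cosh X_i\sinh(b_i/2)$ directly. Applied instead at the vertex $p$, whose interior angle is $\delta_i$, it gives $\sinh(b_i/2) = \sinh(\ell_i/2)\cosh X_i - \cosh(\ell_i/2)\sinh X_i\cos\delta_i$; substituting the length identity just obtained and simplifying with $\cosh^2 - \sinh^2 = 1$ yields $\cos\delta_i = \sinh(b_i/2)\sinh X_i/\cosh(\ell_i/2)$, whence $\sin^2\delta_i = \cosh^2(b_i/2)/\cosh^2(\ell_i/2)$, which is the claimed angle formula.

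I expect the only genuine subtlety to be the geometric setup of the first paragraph: verifying that the simple loop $\gamma_i$ and the shortest arc $\lambda$ interact with the \emph{same} nearest lift $\tilde{B}_i$, so that cutting the embedded annulus produces an honest Saccheri quadrilateral with base length exactly $b_i$ and right angles along the base. Once this configuration is pinned down, the remaining steps are routine applications of (\ref{quad}) and elementary hyperbolic identities.
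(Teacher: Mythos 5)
Your proposal is correct and follows essentially the same route as the paper's proof: cut the one-holed monogon along the shortest arc to obtain a Saccheri quadrilateral with legs $X_i$, base $b_i$, and summit $\ell_i$, bisect it by the common perpendicular of base and summit into two Lambert quadrilaterals, and read off both formulas from quadrilateral trigonometry. The only (harmless) differences are that you derive the needed identities from equation (\ref{quad}) rather than citing \cite[VI.3.3]{Fenchel} directly, and that you make the lifting/nearest-lift setup explicit where the paper simply asserts the Saccheri structure; your algebra checks out.
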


\begin{proof} Consider the one-holed monogon bounded by a simple geodesic loop based at $p$ and freely homotopic to $B_i$.  
Cutting it along the geodesic arc connecting $p$ to $B_i$ results in a Saccheri quadrilateral with legs of length $X_i$, base of length $b_i$, and summit of length $\ell_i$. Since this quadrilateral's legs have equal length, the arc joining the midpoint of its base to that of its summit divides it into isometric sub-quadrilaterals exchanged by a reflection, each with three right angles. Standard hyperbolic trigonometric identities for such quadrilaterals now give the formulas claimed, compare eg.~\cite[VI.3.3]{Fenchel}.\end{proof}


\begin{lemma}\label{inside} Write $F = H\cup\bar{H}$ for isometric right-angled hexagons $H$ and $\bar{H}$ such that $H\cap\bar{H}$ is a disjoint union of three edges.  If $\mathit{sys}_p(F)$ attains a maximum at $p\in\mathit{int}(H)$ or $\mathit{int}(\bar{H})$, then the simple loops based at $p$ and freely homotopic to each boundary component all have equal lengths.
\end{lemma}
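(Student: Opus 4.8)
The plan is a first-variation argument: I will show that unless all three loop lengths agree at $p$, one can nudge $p$ to strictly increase $\mathit{sys}_p(F) = \min\{\ell_1,\ell_2,\ell_3\}$ (using Lemma \ref{simple loop}), contradicting maximality. The engine is Lemma \ref{loop length}, by which $\ell_i$ is a strictly increasing function of $X_i := \mathit{dist}(p,B_i)$ whenever $X_i>0$; so at an interior maximizer it suffices to find a single direction in which $X_i$ simultaneously increases for every index $i$ in the active set $I := \{i : \ell_i = \mathit{sys}_p(F)\}$.

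First I would pin down the local behavior of each $X_i$ on $\mathit{int}(H)$. Writing $s_i = B_i\cap H$ for the side of the right-angled hexagon $H$ lying on $B_i$, and $L_i$ for the complete geodesic containing $s_i$, I claim that for $p\in\mathit{int}(H)$ one has $X_i = \mathit{dist}(p,L_i)$ with the foot of the perpendicular lying in $s_i$. The foot lands in $s_i$ because the two seams of $H$ adjacent to $s_i$ meet it at right angles, so convexity of $H$ confines $H$ to the strip between the two perpendiculars to $L_i$ through the endpoints of $s_i$; and $\mathit{dist}(p,s_i)$ is realized without leaving $H$ because any path from $p$ to the other half $\bar{s}_i\subset\bar{H}$ of $B_i$ can be folded back into $H$ across the seams it crosses, via the reflection exchanging $H$ and $\bar{H}$ (exactly as in the proof of Lemma \ref{simple loop}), without changing its length. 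Consequently each $X_i$ is smooth on $\mathit{int}(H)$ --- there is no cut locus of $B_i$ to contend with there --- with unit gradient $\nabla X_i$ the direction along the perpendicular from $L_i$ pointing away from $B_i$.

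With this in hand the perturbation is routine. At the maximizer, if $|I|=3$ then $\ell_1=\ell_2=\ell_3$ and we are done, so suppose $|I|\le 2$; by continuity of the $\ell_i$ a small enough step keeps every inactive index strictly larger, so it is enough to find a direction $v$ with $\langle v,\nabla X_i\rangle>0$ for all $i\in I$. If $I=\{i\}$, take $v=\nabla X_i$, a nonzero unit vector: then $X_i$, hence $\ell_i$, strictly increases while the other two stay larger, raising the minimum --- a contradiction. If $I=\{i,j\}$, such a $v$ exists unless $\nabla X_i=-\nabla X_j$.

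The one genuine obstacle is this antiparallel case, and resolving it is where the hexagon geometry does the real work. If $\nabla X_i=-\nabla X_j$, then the perpendicular arcs from $p$ to $L_i$ and to $L_j$ are opposite rays of a single geodesic $M$ through $p$, so $M$ is a common perpendicular of $L_i$ and $L_j$ passing through $p\in\mathit{int}(H)$. But the seam of $H$ separating $s_i$ from $s_j$ is itself a segment meeting $L_i$ and $L_j$ at right angles; lifting $H$ isometrically to $\mathbb{H}^2$, where two ultraparallel geodesics admit a \emph{unique} common perpendicular, forces $M$ to coincide with the line through that seam. Since that line meets $\overline{H}$ only in the seam, which lies in $\partial H$, we conclude $p\notin\mathit{int}(H)$ --- a contradiction. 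Hence the active set must be all of $\{1,2,3\}$, giving $\ell_1=\ell_2=\ell_3$, and the case $p\in\mathit{int}(\bar{H})$ is identical. I expect the two claims of the second paragraph (foot-in-$s_i$ and no shortcut through $\bar{H}$) to be the most delicate routine points, with the uniqueness-of-common-perpendicular step the conceptual crux that removes the antiparallel obstruction.
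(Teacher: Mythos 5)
Your proof is correct, and its overall strategy---perturb $p$ so as to simultaneously increase the distances to the boundary components realizing the minimum, then conclude via Lemma \ref{loop length} and continuity of the inactive lengths---is the same first-variation argument the paper uses. Where you genuinely diverge is in the handling of the degenerate case, which is the crux. The paper works in the hyperboloid model of $\mathbb{R}^{1,2}$: writing $\sinh X_i = -p\circ x_i$ for space-like unit normals $x_i$, it uses the two \emph{closed} half-planes $\left\{v\in p^{\perp} : v\circ x_i\le 0\right\}$, whose intersection always contains a line, and notes that even a direction with $v\circ x_i=0$ strictly increases the distance because the second-order term $\lambda''(0)=p$ pairs negatively with $x_i$; the antiparallel configuration is never identified as such but simply absorbed by this second-order estimate. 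You instead work with intrinsic gradients and \emph{open} half-planes, which forces you to confront the case $\nabla X_i=-\nabla X_j$ head-on, and you exclude it synthetically: such a $p$ would lie on a common perpendicular of $L_i$ and $L_j$, which by uniqueness of the common perpendicular of ultraparallel geodesics is the seam line, whose intersection with the convex hexagon is the seam itself, contradicting $p\in\mathit{int}(H)$. (One notational slip: you wrote that this line meets $\overline{H}$ only in the seam, where you need this for $H$---or its closure, which is $H$---since the contradiction is with $p\in\mathit{int}(H)$; the convexity argument does give exactly that.) Your route is more elementary, avoiding Lorentzian linear algebra and second-order analysis, and it isolates the geometric reason interiority matters; the paper's route buys uniformity---generic and degenerate directions are dispatched in one stroke---and its Lorentzian setup is reused in the proofs of Lemmas \ref{interior} and \ref{boundary}, which is presumably why the authors chose it. Your preliminary claims (foot of the perpendicular lands in $s_i$; distances to $B_i$ computable inside $H$ by folding across the seams) are correct and constitute a careful proof of what the paper asserts in one line, namely that the shortest arcs from $p$ to the $B_i$ all lie in $H$.
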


\begin{proof}
By Lemma \ref{loop length}, the distance of $p$ from a boundary component and the length of the loop based at $p$ around that component increase or decrease together.  Supposing that the loop about $B_3$ gives the systolic value with the loop about $B_1$ strictly longer, i.e. $\ell_3\leq \ell_2,$ and $\ell_3 < \ell_1$, the idea is to move $p$ further from both $B_2$ and $B_3$ simultaneously, increasing the systolic loop length and yielding a contradiction.

It is easy to see how to do this in the hyperboloid model for $\mathbb{H}^2$, contained in the Lorentzian space $\mathbb{R}^{1,2}$ as described in Chapter 3 of \cite{Ratcliffe}. To this end, assume that $p\in\mathit{int}(H)$ and regard $H$ as a subset of $\mathbb{H}^2$. Since $H$ is right-angled, the shortest arcs from $p$ to each of the $B_i$ all lie in $H$ as well, so we may take the whole picture to lie in $\mathbb{H}^2$. In particular, for each $i\in\{1,2,3\}$ the edge of $H$ that was contained in $B_i$ --- which we will simply call $B_i$ here, see Figure \ref{fig:hexagon labels} --- now lies in a hyperbolic geodesic that is itself the intersection of a two-dimensional time-like subspace of $\mathbb{R}^{1,2}$ with $\mathbb{H}^2$. (See the Definition above Theorem 3.2.6 in \cite{Ratcliffe}.) 

For each $i\in\{1,2,3\}$, let $x_i$ be the unit space-like normal in $\mathbb{R}^{1,2}$ to the subspace containing $B_i$ that has the property that the Lorentzian inner product $p\circ x_i$ is less than $0$. (Every time-like subspace of codimension one in $\mathbb{R}^{1,n}$ has a space-like normal vector, cf.~\cite[Lemma 1.1]{DeB_Delaunay}.) By \cite[Theorem 3.2.8]{Ratcliffe}, the distance $X_i$ from $p$ to $B_i$ satisfies $\sinh X_i = - p\circ x_i$.

The tangent space to $\mathbb{H}^2$ at $p$ is $p^{\perp} = \left\{v\in\mathbb{R}^{1,2}\,|\,v\circ p = 0\right\}$, a space-like subspace by \cite[Theorem 3.1.5]{Ratcliffe}. A unit vector $v\in p^{\perp}$ determines a geodesic $\lambda\co\mathbb{R}\to\mathbb{H}^2$ parametrized by arclength, given by $\lambda(t) = (\cosh t) p + (\sinh t) v$, with $\lambda(0) = p$ and $\lambda'(0) = v$.  Thus by the paragraph above, in order to ensure that the distance between $\lambda(t)$ and $B_i$ increases for $i=2,3$, we must make $\lambda(t)\circ x_i$ \textit{decrease}, for instance by choosing $v$ so that $v\circ x_2<0$ and $v\circ x_3<0$. In fact, since $\lambda''(0) = p$ pairs negatively with $x_2$ and $x_3$, it is enough to have $v\circ x_i\leq 0$ for $i=2,3$.

The set of all vectors $v$ with $v\circ x_2\leq 0$ is a closed half-space of $\mathbb{R}^{1,2}$ that intersects $p^{\perp}$ in a closed half-plane (since $p^{\perp}\ne x_2^{\perp}$). Similarly, the set $v\circ x_3\leq 0$ is a closed half-plane in $p^{\perp}$, and the result now follows from the standard exercise that in a two-dimensional vector space, the intersection of any two closed half-planes contains a line.\end{proof}

\begin{figure}
    \centering
    \begin{subfigure}[b]{0.3\textwidth}
        \includegraphics[scale=0.75]{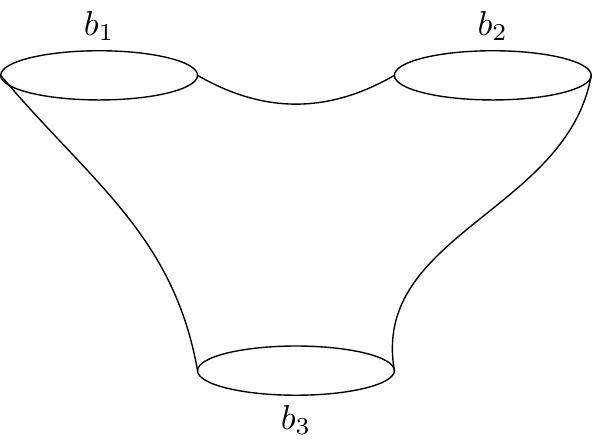}
        \caption{The three-holed sphere}
        \label{fig:3hole}
    \end{subfigure}
    ~ 
    \begin{subfigure}[b]{0.4\textwidth}
        \includegraphics[scale=0.75]{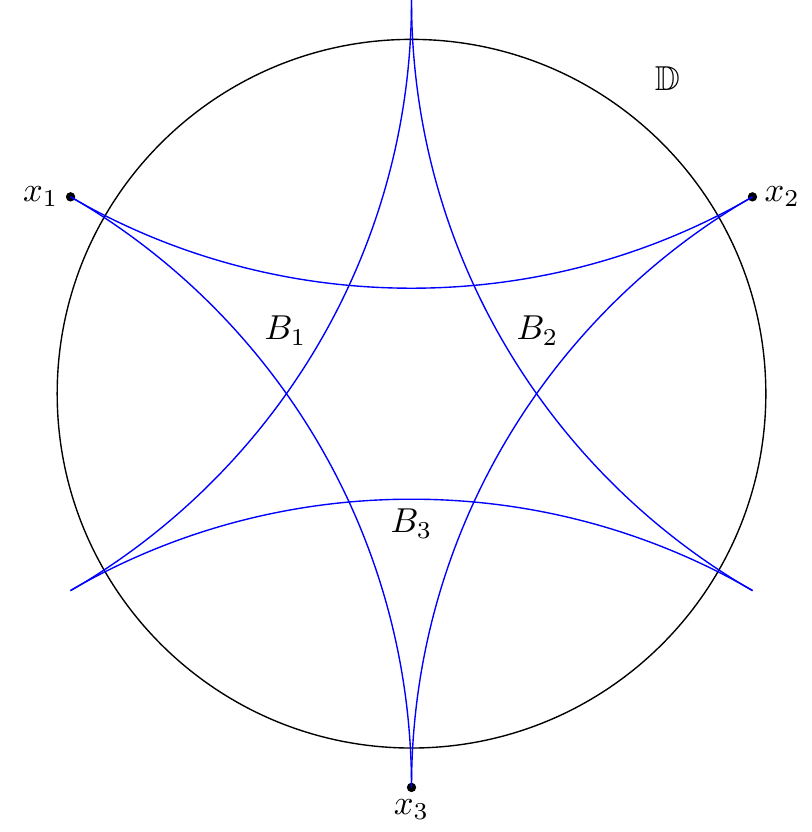}
        \caption{Hexagon in Lorentzian space}
        \label{fig:hexagon}
    \end{subfigure}
    \caption{ }
    \label{fig:hexagon labels}
\end{figure}

The hypotheses of Lemma \ref{inside} require $p$ to lie in the \textit{interior} of $H$ or $\bar{H}$ because its proof requires $p$ to move in an arbitrary direction and remain in $H$ or $\bar{H}$, so that we can continue to interpret $\mathit{sys}_p(F)$ only in terms of the hexagon. We now consider cases when $p$ lies in an edge.

\begin{lemma}\label{interior} Writing $F = H\cup\bar{H}$ for isometric right-angled hexagons $H$ and $\bar{H}$ such that $H\cap\bar{H}$ is a disjoint union of three edges, the maximum of $\mathit{sys}_p(F)$ is not attained at any point $p$ in the interior of an edge of $H\cap\bar{H}$.
\end{lemma}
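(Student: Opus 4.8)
The plan is to argue by contradiction: supposing that $\mathit{sys}_p(F)$ attains its maximum at a point $p$ in the interior of an edge $e$ of $H\cap\bar H$ with endpoints on $B_i$ and $B_j$, I will produce a nearby point at which the systole is strictly larger. By Lemma \ref{simple loop}, for such $p$ we have $\mathit{sys}_p(F)=\min\{\ell_i,\ell_j\}$, while the loop about the third component $B_k$ satisfies $\ell_k>\max\{\ell_i,\ell_j\}$. By continuity of the three loop-length functions this last strict inequality persists in a neighborhood of $p$, so throughout a neighborhood $\mathit{sys}_q(F)=\min\{\ell_i(q),\ell_j(q)\}$. Thus it suffices to move $p$ so as to strictly increase both $\ell_i$ and $\ell_j$ simultaneously; by Lemma \ref{loop length} this is the same as strictly increasing both distances $X_i=\mathrm{dist}(p,B_i)$ and $X_j=\mathrm{dist}(p,B_j)$.

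Next I would set up the hyperboloid-model picture exactly as in the proof of Lemma \ref{inside}. Since $p$ lies on the side $e$ of the right-angled hexagon $H$, the shortest arcs from $p$ to $B_i$ and $B_j$ lie in $H$, so I may regard $H\subset\mathbb{H}^2\subset\mathbb{R}^{1,2}$ and let $x_i,x_j$ be the unit space-like normals to the geodesics carrying $B_i,B_j$ with $p\circ x_i,p\circ x_j<0$, so that $\sinh X_i=-p\circ x_i$ and likewise for $j$. Let $n\in p^{\perp}$ be the unit tangent vector at $p$ pointing into $H$ and normal to $e$, and consider the unit-speed geodesic $\lambda(t)=(\cosh t)\,p+(\sinh t)\,n$, which enters $\mathit{int}\,H$ for small $t>0$.

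The crux of the argument is the observation that $n\circ x_i=n\circ x_j=0$. This holds because $e$ is the common perpendicular of the geodesics carrying $B_i$ and $B_j$ (the edges of $H\cap\bar H$ are the orthogeodesics between boundary components): since $e$ meets $B_i$'s geodesic at a right angle and passes through $p$, the arc of $e$ from $p$ to $B_i$ is the unique perpendicular from $p$ to that geodesic, so $\nabla X_i$ is tangent to $e$ at $p$, i.e.\ $x_i$ projects onto the tangent line of $e$ in $p^{\perp}$. (The reflection of $F$ exchanging $H$ and $\bar H$ fixes $e$ pointwise and preserves each $B_i$, giving an independent check that the normal derivative of $X_i$ along $e$ vanishes.) Granting this,
\[
\sinh X_i(\lambda(t))=-\lambda(t)\circ x_i=-(\cosh t)(p\circ x_i)-(\sinh t)(n\circ x_i)=(\cosh t)\sinh X_i(p),
\]
which strictly exceeds $\sinh X_i(p)$ for every $t\neq 0$; the identical computation applies to $X_j$. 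Hence $\ell_i$ and $\ell_j$ both strictly increase along $\lambda$, so $\min\{\ell_i,\ell_j\}$ --- and therefore $\mathit{sys}_{\lambda(t)}(F)$ --- exceeds $\mathit{sys}_p(F)$ for small $t>0$, contradicting maximality.

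The step I expect to be the main obstacle is pinning down that the first-order variation of the distances in the normal direction vanishes, and then extracting the strict increase from the second-order $(\cosh t)$ factor: moving straight into $H$ does not change $X_i$ or $X_j$ to first order, which is exactly why the first-derivative argument of Lemma \ref{inside} does not apply verbatim at an edge point, and the whole contradiction rests on identifying $e$ as the common perpendicular so that $n\circ x_i=n\circ x_j=0$. One should also take mild care to confirm that $X_i$ really records the distance in $F$ (not merely the distance to the geodesic line of $B_i$ in $\mathbb{H}^2$) for $p$ and its nearby perturbations, which follows from the convexity of $H$ as in Lemma \ref{inside}.
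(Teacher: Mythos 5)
Your proof is correct and takes essentially the same approach as the paper's: both arguments move $p$ perpendicularly off the edge into $H$ in the hyperboloid model and exploit that this normal direction is Lorentz-orthogonal to both $x_i$ and $x_j$, so the distances to $B_i$ and $B_j$ (hence $\ell_i$ and $\ell_j$, by Lemma \ref{loop length}) grow by the second-order factor $\cosh t$. The only differences are cosmetic: you justify $n\circ x_i=n\circ x_j=0$ via the common-perpendicular/gradient observation where the paper cites Ratcliffe's Theorem 3.2.7 to identify the geodesic containing the edge with $\mathbb{H}^2\cap\mathrm{span}(x_i,x_j)$, and you handle the third loop length $\ell_k$ by an explicit continuity argument that the paper leaves implicit.
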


\begin{proof}
Say $p$ lies on the edge $B_1'$ of $H\cap\bar{H}$ joining $B_2$ to $B_3$. By Lemma \ref{simple loop}, $\mathit{sys}_p(F) = \min\{\ell_2,\ell_3\}$. We prove the current result by showing that the lengths $\ell_2$ and $\ell_3$ of the simple loops based at $p$ and freely homotopic to $B_2$ and $B_3$, respectively, are simultaneously increased by moving $p$ off of the geodesic $B_1'$ perpendicularly into $H$. We lift $H$ to $\mathbb{H}^2$ and use notation from the proof of Lemma \ref{inside}, in particular taking $x_2$ and $x_3$ to be space-like unit normals to the geodesics containing the edges of $H$ that lie in $B_2$ and $B_3$, respectively. Since by hypothesis $p$ lies in the interior of $B_1'$ --- that is, not in $B_2$ or $B_3$ --- we may again choose $x_2$ and $x_3$ so that $p\circ x_2<0$ and $p\circ x_3<0$.

The proof of Theorem 3.2.7 of \cite{Ratcliffe} implies that the hyperbolic geodesic $N$ containing $B_1'$ is the intersection between $\mathbb{H}^2$ and the span of $x_2$ and $x_3$ in $\mathbb{R}^{1,2}$. Therefore a vector $v\in p^{\perp}$ and normal to $N$ is normal to each of $x_2$ and $x_3$. Taking $v$ to point into $H$ and arguing as in the proof of Lemma \ref{inside} shows that moving $p$ along the geodesic $\lambda$ with $\lambda(0) = p$ and $\lambda'(0) = v$ thus increases its distances to $B_2$ and $B_3$, hence also $\ell_2$ and $\ell_3$ by Lemma \ref{loop length}.
\end{proof}


\begin{lemma}\label{boundary} If $\mathit{sys}_p(F)$ attains a maximum at $p\in\partial F$, say $p\in B_3$ without loss of generality, then $p$ is not the endpoint of a shortest geodesic arc joining $B_3$ to $B_1$ or $B_2$, and 
\[ \mathit{sys}_p(F) = \ell_1 = \ell_2 \leq b_3, \] 
where $\ell_i$ is the length of the simple loop based at $p$ and freely homotopic to $B_i$. In particular, $B_3$ is the unique longest boundary component of $F$.
\end{lemma}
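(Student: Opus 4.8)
The plan is to work on $B_3$ directly, exploiting that when $p\in B_3$ the loop around $B_3$ is $B_3$ itself, so $\ell_3=b_3$ and $\mathit{sys}_p(F)=\min\{\ell_1,\ell_2,b_3\}$ by Lemma \ref{simple loop}. The shortest arcs from $B_3$ to $B_1$ and to $B_2$ are the seams (edges of $H\cap\bar H$) opposite $B_2$ and $B_1$ respectively; call their feet on $B_3$ $q_1$ and $q_2$. These are the two points fixed by the reflection exchanging $H$ and $\bar H$, so they are antipodal on $B_3$ and cut it into the arcs $B_3\cap H$ and $B_3\cap\bar H$. Since $B_3$ is a geodesic and the distance to a disjoint geodesic is a convex function of arclength with minimum at the foot of the common perpendicular, $X_1$ is minimized on $B_3$ at $q_1$ and increases monotonically along each arc to a maximum at $q_2$; symmetrically $X_2$ is minimized at $q_2$ and maximized at $q_1$. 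By Lemma \ref{loop length} the same monotonicity holds for $\ell_1$ and $\ell_2$, and in particular $q_1,q_2$ are the only critical points of $X_1|_{B_3}$ and of $X_2|_{B_3}$.

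First I would rule out the feet. The key observation is that each foot lies on a seam: $q_2\in B_1'$, the edge of $H\cap\bar H$ with endpoints on $B_2$ and $B_3$. So the final assertion of Lemma \ref{simple loop} applies at $q_2$ with $\{i,j\}=\{2,3\}$, forcing $\ell_1>\max\{\ell_2,\ell_3\}=\max\{\ell_2(q_2),b_3\}$; that is, the loop about $B_1$ is strictly the longest. Hence $\mathit{sys}_{q_2}(F)=\min\{\ell_2(q_2),b_3\}$, and $\ell_2(q_2)$ is the \emph{minimum} of $\ell_2$ over $B_3$. I would then move $p$ along $B_3$ away from $q_2$: this increases $\ell_2$ while $\ell_1$ stays strictly largest, so if $\ell_2(q_2)\le b_3$ the systole strictly increases; if instead $\ell_2(q_2)>b_3$ (or after sliding past $b_3$) all three lengths exceed $b_3$ and an infinitesimal push straight into $H$ raises $\ell_3$ above $b_3$ while keeping $\ell_1,\ell_2>b_3$, again strictly increasing the systole. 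Either way $q_2$ is not a maximum; the symmetric argument at $q_1$ (on the seam opposite $B_2$, where $\ell_2$ is longest and $\ell_1$ is minimized) excludes $q_1$.

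Next, at a maximizing non-foot point $p$ I would show $\ell_1=\ell_2$. Suppose not, say $\ell_1<\ell_2$. If $\ell_1<b_3$ then $\mathit{sys}_p=\ell_1$, and since $p$ is not a critical point of $X_1|_{B_3}$ I can move along $B_3$ to increase $X_1$, hence $\ell_1$, hence the systole --- a contradiction. If $\ell_1\ge b_3$ then $\mathit{sys}_p=b_3$, and I would produce an inward direction raising both $X_1$ and $X_3$: working in the hyperboloid model as in Lemmas \ref{inside} and \ref{interior}, the conditions ``increase $X_1$'' and ``increase $X_3$'' each cut out a half-plane of tangent directions, and their intersection contains an inward direction unless $\nabla X_1\perp B_3$, i.e.\ unless $p$ is a foot. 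Moving that way makes $\ell_1,\ell_3>b_3$ while $\ell_2$ stays $>b_3$, contradicting maximality. Thus $\ell_1=\ell_2$, and the same ``push straight in'' argument forbids $\ell_1=\ell_2>b_3$ (which would give $\mathit{sys}_p=b_3<\ell_3$ after pushing), so $\mathit{sys}_p=\ell_1=\ell_2\le b_3$.

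Finally, the inequality $b_3>b_2\ge b_1$ falls out of Lemma \ref{loop length}: from $\ell_2\le b_3$ we get $\cosh(X_2)\sinh(b_2/2)=\sinh(\ell_2/2)\le\sinh(b_3/2)$, and since $p\in B_3$ is disjoint from $B_2$ we have $X_2>0$, so $\cosh(X_2)>1$ and therefore $\sinh(b_2/2)<\sinh(b_3/2)$, giving $b_2<b_3$. The main obstacle is the foot case: the crucial and slightly hidden point is that the feet sit on seams, so Lemma \ref{simple loop} makes the ``opposite'' loop dominant there and frees the systole to be increased along $B_3$. Getting the boundary subcases (where $\ell_2(q_2)=b_3$, or where some inequality is attained with equality) to yield a \emph{strict} increase is the fiddliest part, handled by first sliding along $B_3$ and only then pushing into the interior.
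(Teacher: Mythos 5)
Your proposal is correct and follows essentially the same route as the paper's proof: both use the final assertion of Lemma \ref{simple loop} to exclude the endpoints of the perpendicular arcs on $B_3$, the monotonicity of the distances $X_i$ (hence of $\ell_i$, via Lemma \ref{loop length}) along $B_3$ away from the corresponding foot to force $\ell_1=\ell_2\le b_3$ at a non-foot maximum, and the formula of Lemma \ref{loop length} together with $X_i>0$ to conclude that $B_3$ is the unique longest boundary component. If anything, your handling of the equality edge cases ($\ell_i=b_3$) --- the slide-then-push move and the half-plane tangent-direction argument in the hyperboloid model --- is more explicit than the paper's corresponding steps, which dismiss those configurations with a terser ``could be increased by moving $p$.''
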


\begin{proof}
We first observe that if $\mathit{sys}_p(F)$ attains a maximum at $p\in B_3$ then $\ell_3 = b_3 \geq \min\{\ell_1,\ell_2\}$. For if $\ell_3$ is less than both $\ell_1$ and $\ell_2$ then $\mathit{sys}_p(F) = \ell_3$ has this property as well. But $\ell_3$, and hence in this case also $\mathit{sys}_p(F)$, can be increased by just moving $p$ off of $B_3$ into $F$ (compare Lemma \ref{loop length}). So if $\mathit{sys}_p(F)$ attains a maximum at $p\in B_3$ then $\mathit{sys}_p(F) = \min\{\ell_1,\ell_2\}\leq b_3$.

Now let $p_i$ be the endpoint on $B_3$ of the shortest arc joining it to $B_i$, for $i=1,2$. These arcs comprise two of the three edges of the intersection $H\cap \bar{H}$, where (as before) $H$ and $\bar{H}$ are right-angled hexagons such that $F=H\cup\bar{H}$ and $H\cap\bar{H}$ is a union of edges. It is a fundamental hyperbolic trigonometric fact that the distance to $B_i$ increases in both $H\cap B_3$ and $\bar{H}\cap B_3$ as one moves away from $p_i$. (It can be showed by an exercise using the notation from the proof of Lemma \ref{inside}, for instance.) 

It follows that the only local maximum of $\ell_1$ on $B_3$ occurs at $p=p_2$, the furthest point of $B_3$ from $p_1$. Therefore at a maximum of $\mathit{sys}_p(F)$ on $B_3 - \{p_2\}$ we must have $\ell_1 \geq \ell_2$, since otherwise $\mathit{sys}_p(F) = \ell_1$ could be increased by moving $p$. By the same token, the only local maximum of $\ell_2$ is at $p=p_1$ and we must have $\ell_2\geq\ell_1$ at a maximum of $\mathit{sys}_p(F)$ on $B_3-\{p_1\}$.  Thus if the maximum occurs at $p\in B_2 - \{p_1,p_2\}$ then $\mathit{sys}_p(F) = \ell_1 = \ell_2$.

If $\mathit{sys}_p(F)$ attains a maximum at $p=p_1$, say, then by the above we would have $\mathit{sys}_{p_1}(F) = \ell_2\leq \ell_1$. But Lemma \ref{simple loop} implies that $\ell_2 >\ell_1$ at $p_1$, a contradiction, since no loop freely homotopic to $B_2$ is also freely homotopic to a power of either $B_1$ or $B_3$. Similarly, $\mathit{sys}_p(F)$ also does not attain a maximum at $p = p_2$. Therefore by the above, if $\mathit{sys}_p(F)$ attains a maximum at $p\in B_3$ then at this point $p$, $\mathit{sys}_p(F) =\ell_1 = \ell_2\leq b_3$. Since $p$ has positive distance from each of the other boundary components $B_1$ and $B_2$, here we have $b_1 < \ell_1$ and $b_2 < \ell_2$, and it follows that $B_3$ is the unique longest boundary component.\end{proof}

\begin{figure}
    \centering
        \includegraphics[scale=0.75]{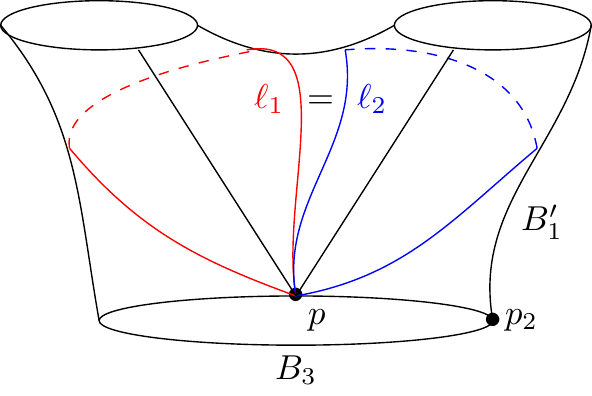}
        \caption{}
        \label{fig:boundary}
\end{figure}

Lemma \ref{boundary} notably does \textit{not} assert that the simple loops freely homotopic to the different boundary components all have the same length at a maximizer for $\mathit{sys}_p(F)$ on $\partial F$. Indeed, Lemma \ref{F} implies that this \textit{cannot} occur for certain triples $(b_1,b_2,b_3)$ of boundary component lengths. This accounts for the difference between \cite[Th.~1.2]{Gendulphe} and the corrected version below.

\begin{thm}\label{maximal sysloops}\MaxSysLoops
\end{thm}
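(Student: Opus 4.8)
The plan is to build on the structural lemmas established in this section (Lemmas \ref{simple loop} through \ref{boundary}) by combining their geometric conclusions with the trigonometric formula of Lemma \ref{loop length} and the analytic facts about $f_{(b_1,b_2)}$ recorded in Lemma \ref{F}. The strategy splits cleanly according to where the maximizer $p$ lies, and the dichotomy $f_{(b_1,b_2)}(b_3)>\pi$ versus $f_{(b_1,b_2)}(b_3)\le\pi$ is exactly the condition distinguishing the interior case from the boundary case.

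\begin{proof}
Since $F$ is rigid, we maximize $\mathit{sys}_p(F)$ over $p\in F$; as $\mathit{sys}_p(F)$ is continuous and $F$ is compact the maximum is attained. By Lemmas \ref{interior} and \ref{inside}, a maximizer in $\mathit{int}(F)$ cannot lie on an edge of $H\cap\bar{H}$ and must have $\ell_1=\ell_2=\ell_3$; by Lemma \ref{boundary}, a maximizer on $\partial F$ must lie on $B_3$ with $\ell_1=\ell_2\le b_3$ and $b_3$ strictly largest. So the two cases are genuinely disjoint, and it remains to (i) translate each configuration into the stated equation, and (ii) show each configuration occurs exactly under the stated inequality.

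\emph{Case (a), interior maximizer.} Here $\ell_1=\ell_2=\ell_3=:x$. At an interior maximizer with all loop lengths equal, the systolic loops based at $p$ cut $F$ into an equilateral triangle and three one-holed monogons, and summing the angles around $p$ gives $2\pi$. Using Lemma \ref{loop length} to express the angle $\delta_i$ at $p$ in each monogon via $\sin\delta_i=\cosh(b_i/2)/\cosh(x/2)$, and noting the equilateral triangle with side $x$ contributes $6\sin^{-1}\!\left(\frac{1}{2\cosh(x/2)}\right)$ in total to the angle sum, the condition that the angles sum to $2\pi$ becomes
\[ f_{(b_1,b_2)}(x)+\sin^{-1}\!\left(\frac{\cosh(b_3/2)}{\cosh(x/2)}\right)=2\pi, \]
which is exactly the stated equation. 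By Lemma \ref{F} (applied with $k=3$, $\chi=-1$) this has a solution $x\in(b_3,\infty)$ precisely when $f_{(b_1,b_2)}(b_3)\ge\pi$; strict inequality $f_{(b_1,b_2)}(b_3)>\pi$ forces $x>b_3$, which is what places $p$ in the interior rather than on $B_3$ (on $B_3$ one has $X_3=0$, i.e.~$\ell_3=b_3$, so $x=b_3$ is the boundary of the two regimes). I must also verify that such a configuration genuinely yields an interior maximizer—that the assembled angle-$2\pi$ configuration embeds as claimed and that its systolic value exceeds any boundary value.

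\emph{Case (b), boundary maximizer.} By Lemma \ref{boundary}, $p\in B_3$, $\ell_1=\ell_2\le b_3$, and $p$ is not an endpoint of a perpendicular from $B_3$. Here $\mathit{sys}_p(F)=\ell_1=\ell_2=:x$ with $x\in(b_2,b_3]$, and the systolic loops cut $F$ into an isosceles triangle (whose apex is $p$ and whose base spans across toward $B_3$) and two one-holed monogons around $B_1,B_2$. The equation $g(x)=\pi$ records a \emph{half}-angle balance along $B_3$ at $p$: the two terms $2\cos^{-1}\!\left(\frac{\tanh(b_3/2)}{\tanh x}\right)$ and $2\sin^{-1}\!\left(\frac{\sinh(b_3/2)}{\sinh x}\right)$ come from the right-angled trigonometry of the isosceles triangle sitting against $B_3$ (the loop hits $B_3$ and the perpendicular arcs to $B_1,B_2$ meet $B_3$ at right angles), while the sum $2\sum_{i=1}^2\sin^{-1}\!\left(\frac{\cosh(b_i/2)}{\cosh(x/2)}\right)$ again counts the monogon angles via Lemma \ref{loop length}. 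The derivation amounts to carefully computing these angles in the hyperboloid/Lorentzian picture of Lemma \ref{inside} and checking they sum to $\pi$ (a straight angle along $B_3$). Finally, $f_{(b_1,b_2)}(b_3)\le\pi$ is exactly the complementary condition to case (a) by Lemma \ref{F}, and under it one checks $g$ is continuous and monotonic on $(b_2,b_3]$ with $g(b_3)\ge\pi$ and $g$ dropping below $\pi$ as $x\downarrow b_2$, so a unique solution exists.
\end{proof}

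The main obstacle I anticipate is the derivation of the equation $g(x)=\pi$ in case (b): unlike case (a), whose angle-sum identity mirrors the closed/cusped bound of \cite{Gendulphe} and Theorem \ref{maximal injrad}, the boundary case has no analog in the literature and requires a genuinely new trigonometric bookkeeping of the isosceles-triangle angles against $B_3$. In particular one must pin down \emph{which} arcs meet $B_3$ perpendicularly and correctly account for the half-angles (hence the factors of $2$ and the appearance of $\tanh$ and $\sinh$ rather than $\cosh$), and then verify the monotonicity of $g$ needed for uniqueness on $(b_2,b_3]$. A secondary subtlety is confirming that the decompositions named in the statement (equilateral plus three monogons; isosceles plus two monogons) are forced by the equal-length conclusions of the earlier lemmas rather than merely consistent with them.
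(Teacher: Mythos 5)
Your case structure and choice of tools (Lemmas \ref{simple loop}--\ref{boundary} plus Lemma \ref{F}) match the paper's proof, but two steps are genuinely missing or wrong. First, in case (a) you only establish the forward implication. The reverse implication --- that $f_{(b_1,b_2)}(b_3)>\pi$ forces the maximum into the interior --- is exactly the step you flag as ``to verify'' and then never carry out, and it does not follow from the lemmas you cite: those lemmas constrain where a maximizer can be, not where it must be. The paper closes this gap constructively: given the unique solution $x>b_3$ of the equation in (a), it glues an equilateral triangle of side $x$ to three one-holed monogons of hole radii $b_1,b_2,b_3$ and side $x$; because the angle-sum equation holds, the quotient is a complete hyperbolic three-holed sphere with boundary lengths $b_1,b_2,b_3$, which by rigidity is isometric to the given $F$. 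At the glued vertex $p$ one has $\mathit{sys}_p(F)=x>b_3$, whereas $\mathit{sys}_{p'}(F)\le b_i\le b_3$ for every $p'\in\partial F$, so the maximum cannot occur on $\partial F$. Without this construction-plus-rigidity argument (or a substitute), assertion (a) is only half proved, and the ``only if'' direction of assertion (b), which the paper deduces from (a) together with attainment of the maximum, collapses as well.

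Second, your endpoint analysis of $g$ in case (b) is backwards. At $x=b_2$ the monogon summand for $B_2$ alone equals $2\sin^{-1}(1)=\pi$, so $g(b_2)>\pi$; and at $x=b_3$ the isosceles triangle degenerates to an equilateral one, giving the identity $g(b_3)=f_{(b_1,b_2)}(b_3)\le\pi$ under the case-(b) hypothesis. Thus $g$ is decreasing, lies above $\pi$ at $b_2$, and is at or below $\pi$ at $b_3$ --- the opposite of your claim that $g(b_3)\ge\pi$ and that $g$ drops below $\pi$ as $x\downarrow b_2$. Note also that existence of a solution to $g(x)=\pi$ needs no intermediate value argument: the maximizer on $B_3$ supplied by Lemma \ref{boundary} already satisfies it, since the total vertex angle at a boundary point is $\pi$. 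What the monotonicity of $g$ buys is uniqueness of the solution in $(b_2,b_3]$, which is what makes the equation characterize the maximum value; as you set it up, with reversed endpoint values, that uniqueness argument rests on false premises.
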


\begin{proof} By Lemma \ref{simple loop} the function $p\mapsto\mathit{sys}_p(F)$ is the minimum of functions $\ell_1$, $\ell_2$, $\ell_3$ each of which depends continuously on $p$ by the formula given in Lemma \ref{loop length}. Therefore since $F$ is compact, $p\mapsto \mathit{sys}_p(F)$ attains a maximum at some $p\in F$.

If the maximum is attained at a point $p\in\mathit{int}(F)$ then by Lemma \ref{inside}, for each $i$ the simple geodesic loop based at $p$ and freely homotopic to $B_i$ has length equal to $\mathit{sys}_p(F)$; i.e.~it is a systolic loop. Each systolic loop bounds an annulus (i.e.~a one-holed monogon) with the boundary component it encircles, and the loops do not intersect except at $p$: if they did then this would give rise to a bigon in $F$ bounded by geodesic arcs, which is impossible. Therefore the monogons bounded by the three systolic loops meet only at $p$. By inspection, the closure of their complement is the image of an equilateral triangle in $\mathbb{H}^2$, embedded in $F$ except that its vertices are all identified to $p$, with its edges sent to the systolic loops.  The vertex angles of the monogons and the equilateral triangle therefore sum to $2\pi$, since these all meet at $p$.

As an equilateral triangle with side length $2r$ has vertex angle $\alpha(r)$ given in Theorem \ref{maximal injrad}, and a one-holed monogon with hole radius $b_i$ has vertex angle $\delta_i$ given in Lemma \ref{loop length}, we find that $f_{(b_1,b_2)}(x)$ described above records the vertex angle sum of an equilateral triangle with one-holed monogons of hole radii $b_1$ and $b_2$, all with side length $x$. Therefore $x = \mathit{sys}_p(F)$ satisfies
\begin{align}\label{eq1} f_{(b_1,b_2)}(x) + 2\sin^{-1}\left(\frac{\cosh(b_3/2)}{\cosh(x/2)}\right) = 2\pi \end{align}
as claimed. Since $f_{(b_1,b_2)}$ is the three-holed sphere case of the function of Lemma \ref{F}, that Lemma's conclusion implies that $f_{(b_1,b_2)}(b_3) \geq \pi$. But we claim that in fact strict inequality holds in this case. To this end, note that $p\in\mathit{int}(F)$ has positive distance to each boundary component $B_i$ and hence $x= \ell_i >b_i$ for each $i$ by Lemma \ref{loop length}.  So since $x>b_3$ the inverse sine term above has value less than $\pi$ and hence $f_{(b_1,b_2)}(x) >\pi$. Since $f_{(b_1,b_2)}(x)$ decreases with $x$, the claim follows.

On the other hand, if $f_{(b_1,b_2)}(b_3) > \pi$ then by the other direction of Lemma \ref{F}'s conclusion there exists a unique $x > b_3$ for which equation (\ref{eq1}) holds. For this $x$, we may identify the edges of an equilateral triangle to those of monogons with hole radii $b_1$, $b_2$, and $b_3$, all with side length $x$ to produce an identification space $F$ homeomorphic to the three-holed sphere. This identification space further inherits a hyperbolic structure from its constituent pieces. In particular, because equation (\ref{eq1}) is satisfied a standard argument shows that the vertex quotient $p\in F$ has a neighborhood isometric to an open disk in $\mathbb{H}^2$ that is a union of wedges: three of angle $\alpha(x/2)$ and one each of angle $\delta_i$ for $i = 1,2,3$. By construction, $F$ has boundary components $B_1$, $B_2$ and $B_3$ of lengths $b_1$, $b_2$ and $b_3$, respectively, and the simple loops based at $p$ and freely homotopic to each $B_i$ all have length $x$. Therefore by Lemma \ref{loop length}, $\mathit{sys}_p(F) = x$.

As noted above for this $x$ we have $x > b_3\geq b_2\geq b_1$. On the other hand, for any point $p'$ in a boundary component $B_i$, $\mathit{sys}_{p'}(F) \leq \ell_i = b_i$. Therefore for the three-holed sphere $F$ constructed above, the function $p\mapsto\mathit{sys}_p(F)$ attains its maximum in the interior of $F$ --- in fact at $p$, by what was already showed --- and we have established the Theorem's assertion (a).

Assertion (a) implies that $\mathit{sys}_p(F)$ attains its maximum at $p\in\partial F$ if and only if $f_{(b_1,b_2)}(b_3)\leq\pi$. Let us henceforth assume that this is the case, and note that it implies that $b_3$ is strictly greater than $b_2\geq b_1$. This is because the summand $2\sin^{-1}\left(\frac{\cosh(b_2/2)}{\cosh(x/2)}\right)$ takes the value $\pi$ at $x = b_2$, so $f_{(b_1,b_2)}(b_2) > \pi$.  Lemma \ref{boundary} asserts that the point $p$ at which the maximum is attained lies on the longest boundary component, that is, $B_3$, and that $\mathit{sys}_p(F) = \ell_1 = \ell_2$ at this point.

As argued in the proof of part (a), the systolic loops based at $p$ and encircling $B_1$ and $B_2$ each bound a one-holed monogon with their respective boundary components. Inspecting Figure \ref{fig:boundary}, one finds that the complement of the union of these monogons' interiors is a hyperbolic isosceles triangle with two sides of length $\mathit{sys}_p(F)$ and one of length $b_3$, and all its vertices identified to $p$. Setting $x = \mathit{sys}_p(F)$, hyperbolic trigonometric calculations give the following formulas for the two equal angles $\theta$ and the other angle $\psi$ of such a triangle:
\begin{align*}
	& \cos(\psi/2) = \frac{\tanh(b_3/2)}{\tanh(x)} & & \sin(\theta) = \frac{\sinh(b_3/2)}{\sinh(x)} \end{align*}
Since $p$ lies on $\partial F$, the total vertex angle sum of the one-holed monogons bounded by $B_1$ and $B_2$ and the isosceles triangle in their complement is $\pi$. This gives the equation $g(x) = \pi$, for $g$ as described in the theorem statement. Of course $x=\ell_2>b_2$ since $p$ has positive distance to $B_2$, and as observed in Lemma \ref{boundary}, $x\leq b_3$. For another way of seeing this last point we can note that $g$ is a decreasing function of $x$, that $g(b_2)>\pi$ since it shares the summand of $f_{(b_1,b_2)}(b_2)$ described above, and that $g(b_3) = f(b_3) \leq \pi$. (This requires some trignometric manipulation.) This completes the theorem's proof.\end{proof}

\bibliographystyle{plain}
\bibliography{3HoleSphere}

\end{document}